\documentclass[12pt]{amsart}
\usepackage[margin=1.2in]{geometry}
\usepackage{color}

\title{Block Diagonal Carleson Frames}

\usepackage{hyperref}

\usepackage{amsthm, amsfonts, mathrsfs} 
\usepackage{ dsfont } 
\usepackage{amssymb}
\usepackage{enumerate}
\usepackage{graphicx}
\usepackage{subfig}
\usepackage{physics}
\usepackage{float}
\usepackage{bbm}
\usepackage{amsmath}
\usepackage{comment}
\usepackage{hyperref}
\usepackage{listings}

\usepackage{color}

\usepackage{diagbox}
\usepackage[dvipsnames]{xcolor}
\usepackage{algorithm}
\usepackage{booktabs}
\usepackage{algorithmicx}
\usepackage[noend]{algpseudocode}
\usepackage{tikz}
\usepackage{cite}
\usepackage{orcidlink}
\allowdisplaybreaks

\hypersetup{
    colorlinks,
    citecolor=blue,
    filecolor=blue,
    linkcolor=blue,
    urlcolor=blue
}

\newtheorem{theorem}{Theorem}[section]
\newtheorem{proposition}[theorem]{Proposition}
\newtheorem{lemma}[theorem]{Lemma}
\newtheorem{corollary}[theorem]{Corollary}

\newtheorem{remark}[theorem]{Remark}

\newcommand{\HH}{\mathcal{H}}

\newcommand{\R}{\mathbb{R}}

\newcommand{\N}{\mathbb{N}}
\newcommand{\CC}{\mathbb{C}}

\providecommand{\norm}[1]{\lVert#1\rVert}

\newcommand {\D} {\mathbb D}
\newcommand{\ang}[1]{\langle #1 \rangle}

\newcommand{\Nzero}{\mathbb N_0}
\newcommand{\ellTwo}{\ell^2(\Nzero)}

\newcommand {\la} {\langle}
\newcommand {\ra} {\rangle}
\newcommand{\floor}[1]{\lfloor #1 \rfloor}
\usepackage[foot]{amsaddr}

\newcounter{lst}

\definecolor{ikcolor}{rgb}{1, 0., 0.}

\definecolor{bmcolor}{rgb}{0.9, 0.3, 0}

\begin{document}
\begin{abstract}
    We introduce \textit{block diagonal Carleson frames}, i.e.~singly generated dynamical frames in which the generating operator is block diagonal. We classify all such frames when the generating operator is a direct sum of Jordan blocks, the sizes of which are uniformly bounded. Furthermore, block diagonal Carleson frames are shown to enjoy the high redundancy properties observed for Carleson frames. When the generating operator has nonnegative spectrum, we provide a complete description of the redundancy of such frames under the assumption of the existence of natural density.
\end{abstract}

\author[I. Krishtal]{Ilya Krishtal}
\address{School of Mathematical and Statistical Sciences\\
Northern Illinois University\\
DeKalb, IL 60115, USA}
\email{ikrishtal@niu.edu}

\author[B. Miller]{Brendan Miller}
\address{Department of Mathematics\\
Vanderbilt University\\
1326 Stevenson Center, Nashville, TN 37240, USA}
\email{brendan.miller@vanderbilt.edu}

\maketitle

\section{Introduction}
Born out of questions in sampling theory, there is now a large literature on frames of operator orbits called \textit{dynamical frames} (see the survey \cite{ACKM26} and the references therein). For a given Hilbert space $\mathcal{H}$, the central goal, sometimes referred to as the \textit{dynamical sampling question}, is to classify all triples $(T, \mathcal{G}, \Lambda)$ where $T \in B(\mathcal{H})$, $\mathcal{G \subset \mathcal{H}}$ is countable, and $\Lambda$  is an indexing set such that $\{T^\lambda g\}_{g\in \mathcal{G},\lambda \in \Lambda}$ is a frame for $\HH$. Common references on the subject include \cite{ACMT17} for finite dimensional $\mathcal{H}$ and broad results for infinite dimensions, \cite{CMPP20} for the case of finite generating sets $\mathcal{G}$, \cite{CHP20} for the theory behind singly generated dynamical frames (i.e. $|\mathcal{G}| = 1$), and \cite{ACCMP17} for normal generating operators.   

Among the broader class of dynamical frames are the \textit{Carleson frames}, which have garnered particular interest in recent years. These are singly generated dynamical frames $\{D^k g\}_{k = 0}^\infty$ for which the generating operator $D$ is diagonal with respect to some orthonormal basis. It was originally proved in \cite{ACMT17} that such a sequence is a frame if and only if 1) the point spectrum $\sigma_p(D) = \{z_i\}_{i = 0}^\infty\subset \D$ is a \textit{Carleson sequence}: 
\[
\inf_{k} \prod_{j\neq k} \bigg|\frac{z_j - z_k}{1-\overline{z_j}z_k}\bigg| > 0
\]
and 2) the generating vector $g$ satisfies 
\[
C_1 \sqrt{1-|z_i|^2} \leq |\ang{g,e_i}| \leq C_2\sqrt{1-|z_i|^2}
\]
where $C_2\geq C_1>0$ and $\{e_i\}$ is the set of normalized eigenvectors for $D$. 

While Carleson sequences have been studied for decades (see texts such as \cite{NN2012, Seip04, Garnett81}), Carleson frames have earned the current attention of frame theorists due to their unusually high redundancy. Specifically, the following is known. 

\begin{theorem}\label{T:intro_thm}
    Let $\{D^k g\}_{k = 0}^\infty$ be a Carleson frame, so that $\sigma_p(D) = \{z_i\}_{i = 0}^\infty$ is a Carleson sequence. 
    \begin{enumerate}
        \item[(i)] \cite{CHPS24} If $\{|z_i|\}_{i = 0}^\infty$ is a Carleson sequence, then $\{D^{Nk}g\}_{k = 0}^\infty$ is a frame for any $N \in \N$. 
        \vspace{2mm}
        \item[(ii)]\cite{KM25d} If $\sigma_p(D) \subset (0,1)$, then $\{D^{Nk+j_k} g\}_{k = 0}^\infty$ is a frame for any $N \in \N$ and any $j_k \in [0,N)$. 
    \end{enumerate}
\end{theorem}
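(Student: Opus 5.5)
Both parts reduce to the characterization from \cite{ACMT17} recalled above. For a diagonal operator $D$ with eigenvalues $z_i$ and $d_i = \ang{g,e_i}$, the system $\{D^{\lambda} g\}_{\lambda\in\Lambda}$ is a frame exactly when the analysis map $f\mapsto (\ang{f, D^{\lambda}g})_\lambda$ is bounded above and below. Writing $f=\sum_i c_i e_i$, we have $\ang{f,D^\lambda g} = \sum_i c_i \overline{d_i}\,\overline{z_i}^{\lambda}$. So everything reduces to a two-sided estimate of $\sum_{\lambda\in\Lambda}\big|\sum_i a_i \overline{z_i}^{\lambda}\big|^2$ against $\sum_i |a_i|^2/(1-|z_i|^2)$, where $a_i = c_i \overline{d_i}$. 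By the ACMT criterion we already have $|d_i|\asymp\sqrt{1-|z_i|^2}$, and this two-sided estimate holds when $\Lambda=\N_0$. This is a reproducing-kernel and interpolation statement in $H^2$ under the identification $\ell^2\cong H^2$, $k\mapsto z^k$.

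\textbf{Part (i).} I would treat $\{D^{Nk}g\}$ as the Carleson frame generated by $D^N$ with the same vector $g$. The eigenvalues of $D^N$ are $z_i^N$ and $1-|z_i^N|^2\asymp_N 1-|z_i|^2$, so the hypothesis on $g$ is preserved. The only remaining point is that $\{z_i^N\}$ is a Carleson sequence, which I would establish as follows.
\begin{enumerate}
\item[(a)] Use the hypothesis that $\{|z_i|\}$ is Carleson in $\D$. Since $\{|z_i|\}$ lies on the radius $[0,1)$, this is equivalent to $\{|z_i|\}$ being separated in the pseudo-hyperbolic metric, with $1-|z_i|$ decaying at least geometrically after ordering.
\item[(b)] The map $r\mapsto r^N$ preserves both properties for radii near $1$, because $1-r^N\asymp N(1-r)$ there. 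Hence $\{|z_i|^N\}$ is Carleson, and consequently $\{z_i^N\}$ is uniformly separated: the pseudo-hyperbolic distance between two points is at least that between their moduli.
\item[(c)] For the finitely many points with small modulus, finitely many points do not affect the Carleson property, provided no collision $z_i^N=z_j^N$ occurs. Such a collision is excluded because the moduli are distinct, as $\{|z_i|\}$ is separated.
\end{enumerate}

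\textbf{Part (ii).} Here the $z_i$ lie in $(0,1)$ and the exponents $\lambda_k = Nk+j_k$ are irregular, so I would compare directly with the regular subsequence $\{Nk\}$. By part (i), $\{D^{Nk}g\}$ is a frame, since a Carleson sequence in $(0,1)$ has $\{|z_i|\}=\{z_i\}$ Carleson. The two bounds are then handled separately.
\begin{enumerate}
\item[(a)] \emph{Upper bound.} Here $\{D^{Nk+j_k}g\}$ is the image of the frame $\{D^{Nk}g\}$ under the positive contractions $D^{j_k}$. I would bound the Bessel constant by the Carleson-measure estimate $\sum_k |F(\overline{z}^{\lambda_k})|^2$, which holds since $\lambda_k$ is $N$-separated.
\item[(b)] \emph{Lower bound.} Positivity is the key tool: for $z_i\in(0,1)$ each $D^{j}$ is a positive operator and $z_i^{Nk+j_k}\ge z_i^{N(k+1)}$. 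I would write $\ang{f,D^{\lambda_k}g}=\ang{D^{j_k}f,D^{Nk}g}$ and compare it with blocks of the regular frame. One possible route is to test on $f$ with $a_i$ of one sign, but general $f$ mixes signs, so this needs care.
\item[(c)] A cleaner alternative for the lower bound is a Müntz--Szász and interpolation argument. The frame property is equivalent to the moment map $(a_i)\mapsto\big(\sum_i a_i z_i^{\lambda_k}\big)_k$ being bounded below on the weighted $\ell^2$ space. For $z_i\in(0,1)$, the substitution $z_i=e^{-t_i}$ converts this to sampling exponential sums $\sum_i a_i e^{-t_i\lambda}$ on the set $\Lambda$. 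The set $\Lambda$ has uniform density $1/N$ in each window of length $N$. The problem is then to show that the sampling inequality is stable under perturbing $Nk$ to $Nk+j_k$, with $|j_k|<N$, uniformly in the sequence $(j_k)$.
\end{enumerate}

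I expect the lower bound in part (ii) to be the main obstacle. Stability of sampling for exponential sums requires a Bernstein-type inequality $\big|\tfrac{d}{d\lambda}\sum_i a_i e^{-t_i\lambda}\big|$, controlled through the Carleson structure of the $t_i$. That inequality only gives small perturbations, whereas $j_k$ can be as large as $N-1$. I would therefore try to exploit positivity together with a monotonicity or log-convexity argument in $\lambda$.
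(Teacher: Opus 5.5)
\textbf{Part (i) and the upper bound in (ii).} Your part (i) is correct and follows the same route as the paper's Theorem~\ref{T:redundancy}, whose diagonal case with $r=N$ is (i). You treat $D^N$ as a new diagonal generator, note that $1-|z_i|^{2N}\asymp 1-|z_i|^2$ keeps $g$ admissible, and check that $\{z_i^N\}$ is Carleson. The paper cites \cite{CHPS24} for this last point. Your argument supplies it: geometric decay of $1-|z_i|^N$ on the tail, plus the termwise bound $\rho(z,w)\ge\rho(|z|,|w|)$ for the pseudo-hyperbolic distance in the Carleson product.

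The upper bound in (ii) is true, but your reason is wrong. $\{\lambda_k\}$ need not be $N$-separated: $j_k=N-1$, $j_{k+1}=0$ gives a gap of $1$. The bound holds simply because $\{D^{Nk+j_k}g\}$ is a subfamily of the Bessel orbit $\{D^mg\}_{m\ge0}$.

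\textbf{The gap: the lower frame bound in (ii).} You list two strategies and leave both open, so (ii) is not proved. The paper obtains (ii) as the case $L=1/N$ of Theorem~\ref{T:Density-redundancy}. The idea you are missing is to compare frame operators entrywise instead of testing on individual $f$.

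\emph{The tail.} The paper compares $S_\Lambda$ with $LS_{\N_0}$ via Karamata (Corollary~\ref{C:useful_karamata}). For your $\Lambda$, comparing with the frame operator $S_N$ of $\{D^{Nk}g\}$ (a frame by (i)) is even more direct.
\begin{itemize}
\item In the eigenbasis, every entry of $S_\Lambda$ and of $S_N$ is $\overline{d_i}d_r$ times a sum of powers of $z_iz_r\in(0,1)$.
\item Your own monotonicity observation then gives $|(S_\Lambda-S_N)_{ir}|\le\bigl(1-(z_iz_r)^{N-1}\bigr)|(S_N)_{ir}|$.
\item Because $1-z_i$ decays geometrically, the matrix $\bigl(|(S_N)_{ir}|\bigr)$ has uniformly bounded row sums, as in Lemma~\ref{L:frame_operator2}.
\item Schur's test then gives $\|P_M(S_\Lambda-S_N)P_M\|\le C\bigl(1-z_M^{2(N-1)}\bigr)\to0$.
\item Lemma~\ref{L:PD_op} now makes $\{P_MD^{\lambda_k}g\}$ a frame for the tail $W_M$ when $M$ is large.
\end{itemize}
The sign issue you worried about never arises. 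This also answers your Bernstein-inequality concern: a shift by $j_k\le N-1$ is a relatively small perturbation exactly for eigenvalues near $1$.

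\emph{The head.} For the finitely many remaining eigenvalues no perturbation argument works, and you need a completeness input. The paper uses M\"untz--Sz\'asz in Proposition~\ref{P:outer_frame_to_frame} to upgrade the tail outer frame to a frame. Alternatively, Section~\ref{S:completeness} combines a compact-perturbation argument (Proposition~\ref{prop:compact-synthesis}) with M\"untz completeness (Proposition~\ref{P:muntz_sufficient}).

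Without these two ingredients, part (ii) of your proposal is a plan rather than a proof.
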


The above invites several intriguing follow-up questions, two of which are: do other singly generated dynamical frames admit such redundancy? And what is the extent of the redundancy of such frames when the generating matrix has positive spectrum? The present paper aims to investigate these questions. 

A natural enlargement of the class of Carleson frames is provided by block diagonal generating operators as defined below. They offer a tractable setting in which to study dynamical sampling for genuinely nonnormal systems. Replacing scalar diagonal entries by Jordan blocks allows each spectral point to carry a finite-dimensional family of generalized eigenvectors. On the analytic side, this replaces ordinary point evaluation by simultaneous evaluation of derivatives, so that the frame problem becomes closely connected with stable Hermite interpolation in the Hardy space. With the introduction of block diagonal Carleson frames, we form a bridge between operator-orbit frames, nonnormal spectral theory, and interpolation with multiplicities, while retaining enough structure to permit explicit classification results.

For each integer $i \geq 0$, let $V_i \cong \CC^{n_i}$ be an inner product space with orthonormal basis $\{\delta_{ij}\}_{j = 0}^{n_i-1}$. By $J_{n_i}(z_i)$ denote the operator on $V_i$ which has matrix representation 
\begin{equation}\label{E:jordan_block}
J_{n_i}(z_i)=\begin{bmatrix}
z_i & 0        & 0        & \cdots & 0 \\
1-|z_i|^2       & z_i & 0       & \cdots & 0 \\
0         & 1-|z_i|^2        & z_i & \ddots & \vdots \\
\vdots    & \vdots   & \ddots   & \ddots & 0 \\
0         & 0        & \cdots   & 1-|z_i|^2     & z_i
\end{bmatrix}
\end{equation}
with respect to $\{\delta_{ij}\}_{j = 0}^{n_i-1}$. By a \textit{block diagonal Carleson frame}, or BDC frame, we will mean a singly generated dynamical frame $\{J^k g\}_{k = 0}^\infty$ where $J = \bigoplus\limits_{i = 0}^\infty J_{n_i}(z_i)$ is an operator on $\ell^2 \cong \bigoplus\limits_{i = 0}^\infty V_i$. 

In Section \ref{S: BDCF} we classify all BDC frames by characterizing the admissible generating vectors and showing that such an iterated system forms a frame if and only if the point spectrum is a Carleson sequence.

\begin{theorem}\label{T:characterization}
    Let $J = \bigoplus\limits_{i=0}^\infty J_{n_i}(z_i)$ where $\sup_{i} n_i < \infty$. Then the sequence $\{J^k g\}_{k = 0}^\infty$ is a frame for $\ell^2$ if and only if 
    \begin{enumerate}
        \item $\{z_i\}_{i = 0}^\infty \subset \D$ is a Carleson sequence and 
        \item the vector $g$ is of the form 
        \[
        g = \sum_{i =0}^\infty \sum_{j = 0}^{n_i-1} c_{i,j}\sqrt{1-|z_i|^2}\delta_{i,j}
        \]
        where $\inf\limits_{i} |c_{i,0}| > 0$ and $\sup\limits_{i,j} |c_{i,j}| < \infty$.
    \end{enumerate}
\end{theorem}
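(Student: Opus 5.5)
The plan is to recast the frame property as a statement about an analysis operator into the Hardy space $H^2$ of the disc. Consider
\[
T:\ell^2\to H^2,\qquad (Tx)(w)=\sum_{k\ge 0}\ang{x,J^k g}\,w^k=\ang{x,(I-\overline{w}J)^{-1}g}.
\]
Since $\norm{Tx}_{H^2}^2=\sum_k|\ang{x,J^kg}|^2$, the sequence $\{J^kg\}$ is a frame exactly when $T$ is bounded and bounded below.

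\emph{Step 1: factor $T$.} I will compute $T$ on the basis vectors $\delta_{i,m}$. Inverting $I-\overline{w}J_{n_i}(z_i)$ is explicit, because the block is $z_i$ plus $(1-|z_i|^2)$ times a nilpotent shift. This gives, up to harmless conjugations,
\[
T\delta_{i,m}=\sum_{l=0}^{m}\overline{c_{i,m-l}}\;e_{i,l},\qquad e_{i,l}(w)=\frac{(1-|z_i|^2)^{l+1/2}\,w^l}{(1-\overline{z_i}w)^{l+1}}.
\]
Here $e_{i,l}$ is a normalized derivative-type Szegő kernel, with norm comparable to $1$ uniformly in $i$ because $l<\sup n_i<\infty$. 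Hence $T=E\,C^{*}$ (up to conjugation). In this factorization $E\delta_{i,l}=e_{i,l}$, and $C=\bigoplus_i C_i$, where $C_i$ is the lower-triangular Toeplitz $n_i\times n_i$ matrix with entries $c_{i,0},\dots,c_{i,n_i-1}$.

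\emph{Step 2: the interpolation input.} The key analytic fact I would invoke is the Hermite-interpolation version of the Shapiro–Shields/Carleson theorem. For bounded multiplicities, the family $\{e_{i,l}\}_{0\le l<n_i}$ is a Riesz sequence in $H^2$ if and only if $\{z_i\}$ is a Carleson sequence; in particular the $z_i$ must be distinct. This is classical work of Vasyunin and Nikolski on uniformly minimal families of rational fractions (cf.\ \cite{NN2012}). With bounded $n_i$, the Carleson condition for the divisor with multiplicities is equivalent to the plain Carleson condition on the distinct points. I expect this step to be the main obstacle: one must make sure the multiplicity version of the Carleson condition really reduces to the stated one when $\sup n_i<\infty$, and that the particular normalization of $e_{i,l}$ matches the one in the literature up to uniformly bounded triangular changes of basis within each block.

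\emph{Step 3: sufficiency.} Assume (1) and (2). Each $C_i$ is lower-triangular Toeplitz with diagonal $c_{i,0}$. Its norm is bounded by $n_i\sup|c_{i,j}|$. Its inverse has entries that are polynomials in the $c_{i,j}$ divided by powers of $c_{i,0}$, of degree at most $\sup n_i$. So $C$ is bounded with bounded inverse. Step 2 makes $E$ bounded and bounded below, hence so is $T$.

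\emph{Step 4: necessity.} Suppose the sequence is a frame. The vectors $\delta_{i,0}$ satisfy $J^*\delta_{i,0}=\overline{z_i}\delta_{i,0}$, so
\[
\ang{J^kg,\delta_{i,0}}=z_i^k\,c_{i,0}\sqrt{1-|z_i|^2}.
\]
I apply the frame inequalities to $x=\sum_i a_i\delta_{i,0}$. This gives that $\{c_{i,0}\,k_{z_i}/\norm{k_{z_i}}\}$ is a Riesz sequence in $H^2$, where $k_{z_i}$ is the Szegő kernel at $z_i$.
\begin{itemize}
\item Testing a single $\delta_{i,0}$ forces $|c_{i,0}|$ to be bounded above and below; in particular $\inf_i|c_{i,0}|>0$. (Also, $z_i\in\D$ is needed for the orbit coefficients to be square-summable.)
\item With these bounds, the normalized kernels themselves form a Riesz sequence, so $\{z_i\}$ is Carleson by Shapiro–Shields.
\end{itemize}
Step 2 then makes $E$ bounded below. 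Since $T=EC^*$ is bounded, $C$ is bounded. Testing $C$ on single basis vectors gives $\sup_{i,j}|c_{i,j}|<\infty$. This yields exactly the form of $g$ stated in (2).
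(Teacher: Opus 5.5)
Your proof is correct, but it is organized differently from the paper's. The paper first shows (Proposition \ref{T:CV frames}) that the canonical vector $f=\sum_i\sqrt{1-|z_i|^2}\,\delta_{i,0}$ generates a frame exactly when $\{z_i\}$ is Carleson, by identifying its synthesis operator with the Nikolskii--Vasyunin map. It then handles a general $g$ abstractly. It uses Lemma \ref{L:similar} of Christensen--Hasannasab--Philipp, which says the admissible generators are exactly $Xf$ with $X$ invertible and commuting with $J$. It combines this with the commutant computation of Proposition \ref{P:commutant}, which rests on Sylvester equations and the triangular Toeplitz norm and inverse estimates.

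You instead write $g=Cf$ for the formal block Toeplitz matrix $C$ and factor the analysis operator as $T=EC^*$. Here $E^*h=\bigl((1-|z_i|^2)^{l+1/2}h^{(l)}(z_i)/l!\bigr)_{i,l}$ is precisely the paper's $\Psi$. So your Step 2 is the same analytic input in dual form, and the concerns you raised there disappear. The normalization differs from the cited map only by the bounded diagonal factor $1/l!$. The multiplicity Carleson condition reduces to the plain one because
$\prod_{j\ne i}|b_{z_j}(z_i)|^{n_j}\ge\bigl(\prod_{j\ne i}|b_{z_j}(z_i)|\bigr)^{\sup_j n_j}$.

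What your route buys is a self-contained necessity argument. Using the eigenvectors $\delta_{i,0}$ of $J^*$, you obtain $|z_i|<1$, $|c_{i,0}|\asymp 1$, distinctness of the $z_i$, and the Carleson condition via Shapiro--Shields, all for an \emph{arbitrary} generator $g$. Only afterwards do you use $E$ bounded below to bound $C$. The paper's argument starts from the canonical frame and a commutant description that already presuppose condition (1), or at least injectivity of $\{z_i\}$. So the necessity of (1) for a general $g$ is left implicit there.

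What the paper's route buys is modularity. Lemma \ref{L:similar} and the commutant description are exactly what later reduce Theorems \ref{T:redundancy} and \ref{T:Density-redundancy} to the canonical vector.

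One cosmetic fix: in Step 4, write $g_{i,0}$ rather than $c_{i,0}\sqrt{1-|z_i|^2}$ until you have shown $|z_i|<1$.
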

\noindent We also provide a generalization of Theorem \ref{T:intro_thm}(\textit{i}) to the block diagonal setting.

Section \ref{S:positive_spec} is devoted to strengthening Theorem \ref{T:intro_thm}(\textit{ii}). Stated for strictly positive spectrum BDC frames, the main theorem of Section \ref{S:positive_spec} is the following.
 \begin{theorem}\label{T:density-carleson}
     Let $\{J^kg\}_{k = 0}^\infty$ be a block diagonal Carleson frame with $\sigma_p(J) \subset (0,1)$. Assume $\Lambda = \{\lambda_k\}_{k = 0}^\infty$ is a strictly increasing sequence where $\lambda_k \in [0,\infty)$ and $\Lambda$ has well-defined natural density: 
     \[
     \lim_{x \to \infty} \frac{N_{\Lambda}(x)}{x} = L
     \]
     for $N_\Lambda(x) = |\{k| \lambda_k \leq x\}|$. Then $\{J^{\lambda_k}g\}_{k = 0}^\infty$ is a frame if and only if $0<L<\infty$. 
 \end{theorem}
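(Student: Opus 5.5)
We plan to reduce the question to a statement about Hardy-space interpolation with multiplicities, and then use the positivity of the spectrum to convert it into a Müntz--Szász type density problem on $(0,1)$.

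\textbf{Reduction to a weighted Hermite problem.} Since $\sigma_p(J)\subset(0,1)$, the powers $J^{\lambda}$ are defined for real $\lambda\ge 0$ by the holomorphic functional calculus on each block. Write $J_{n}(z)=zI+(1-z^2)N$ with $N$ the shift. Then
\[
J_{n}(z)^{\lambda}=\sum_{m<n}\binom{\lambda}{m}z^{\lambda-m}(1-z^2)^mN^m .
\]
Hence the coefficient $\langle J^\lambda g,\delta_{i,m}\rangle$ is, up to bounded invertible triangular changes within each block, a linear combination of $(1-z_i^2)^{m+1/2}\,\partial_z^m z^{\lambda}\big|_{z=z_i}$ for $m<n_i$. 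By Theorem~\ref{T:characterization} the coefficients $c_{i,0}$ are bounded below and the other $c_{i,j}$ are bounded. So the frame property of $\{J^{\lambda_k}g\}$ is equivalent to the following two-sided estimate, uniformly over $a\in\ell^2$:
\[
\sum_k\Big|\sum_{i}\sum_{m<n_i}\overline{a_{i,m}}(1-z_i^2)^{m+1/2}\,\partial^m z^{\lambda_k}\big|_{z_i}\Big|^2\asymp \|a\|^2 .
\]
The upper (Bessel) bound should follow for any $\Lambda$ of finite upper density $L$. We would cover $\Lambda$ by finitely many sequences with gaps at least $1/(2L)$ and compare each with an integer lattice scaled into one of finitely many shifts. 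This uses the Bessel property of the lattice orbits already established, with exponents $Nk+j_k$ in the spirit of Theorem~\ref{T:intro_thm}(ii) extended to blocks.

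\textbf{Necessity.} Suppose $L=0$. Substitute $z=e^{-t}$, so that $z_i^{\lambda}=e^{-\lambda t_i}$ with $t_i\approx 1-z_i$. We would produce unit vectors $a$ supported on a single block $i$ with $z_i\to1$ for which the lower bound fails. The analysis is then a local problem for the exponential system $\{\lambda^m e^{-\lambda t}\}$: normalized derivative evaluations at $t_i$ sample $\Lambda$ at scale $1/t_i$. Density zero means that for suitable $t_i\to 0$ the window $[0,C/t_i]$ contains $o(1/t_i)$ points. A single evaluation concentrates its mass on $\lambda\lesssim 1/t_i$ with normalized weights $t_i\lambda^{2m}e^{-2\lambda t_i}$, so the Riemann sums degenerate to $o(1)$. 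This requires showing that Carleson sequences in $(0,1)$ have infinitely many points tending to $1$, which holds because the space is infinite dimensional. The case $L=\infty$ is excluded because the Bessel bound fails: the same Riemann sum is bounded below by roughly $t_i\,N_\Lambda(1/t_i)\to\infty$.

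\textbf{Sufficiency: the main obstacle.} For $0<L<\infty$ we need the lower frame bound. Our first attempt is a perturbation/comparison argument. Because $\lambda_k/k\to1/L$, after discarding finitely many terms (which affects only finitely many "low-frequency" blocks and is handled by a compactness argument together with Müntz--Szász uniqueness, since $\sum1/\lambda_k=\infty$) we would group $\Lambda$ into consecutive windows that each contain a point near every lattice point of $\frac1L\N$. We would then compare the analysis operator with that of the lattice $\{J^{k/L}g\}$. The operator $J^{1/L}$ is again block diagonal with spectrum $z_i^{1/L}$, which remains Carleson in $(0,1)$, so the lattice system is a BDC frame (after a bounded block change of the generator) by Theorem~\ref{T:characterization}. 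The difficulty is that $\lambda_k-k/L$ need not be bounded, so a Kadec-type perturbation fails. Instead we would work at each scale $t_i$ separately. The mass of the $i$-th evaluation lives on $\lambda\sim 1/t_i$, and there $N_\Lambda(x)=Lx(1+o(1))$ gives Riemann sums equal to $L\int$ up to $o(1)$ relative error. Combining this with almost-orthogonality of different scales, guaranteed by the Carleson separation (the $t_i$ are lacunary up to bounded multiplicity), would yield the Gram comparison. Making the cross-scale error summable is the step we expect to be hardest. We would handle it with a Schur test on the kernel $\frac{\sqrt{t_it_j}}{t_i+t_j}$-type entries, modified for the derivatives.
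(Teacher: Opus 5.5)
\textbf{Gap: the Bessel step fails as proposed.} A sequence with well-defined, finite natural density need not be a finite union of sequences with gaps at least $1/(2L)$. Any union of $K$ sequences with gaps $\ge\gamma$ has at most $K(\lfloor 1/\gamma\rfloor+1)$ points in each unit interval. By contrast, $\Lambda=\N_0\cup\bigcup_{n\ge2}\{n^2+j/n:1\le j\le n-1\}$ has natural density $3/2$ but more than $n$ points in $[n^2,n^2+1]$. The theorem covers such $\Lambda$, so you cannot compare with finitely many shifted lattices. Your preliminary idea of windows containing a point near every point of $\frac{1}{L}\N$ fails for the same reason, since unbounded gaps of size $o(x)$ are allowed; you do abandon it.

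The repair is the paper's Proposition~\ref{P:besselness}. Discard the finitely many $\lambda_k\le n_*-1$. Then every Gram entry $\binom{\lambda_k}{j}\binom{\lambda_k}{s}(z_iz_r)^{\lambda_k}$ is nonnegative, so you may assume $b\ge0$ coordinatewise. It then suffices to show $\sum_k\binom{\lambda_k}{j}\binom{\lambda_k}{s}\rho^{\lambda_k}\le C\sum_k\binom{k}{j}\binom{k}{s}\rho^{k}$ uniformly for $\rho\in[(\inf_i z_i)^2,1)$. This follows from $N_\Lambda(x)=O(x)$ by Stieltjes integration by parts, i.e.\ the Abelian half of Karamata (Corollary~\ref{C:useful_karamata}). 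The Bessel bound of $\{J^kf\}$ then transfers. This is the same estimate your sufficiency argument already needs.

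\textbf{The rest matches the paper.} Necessity is the same diagonal computation $(1-z_i^2)\sum_k z_i^{2\lambda_k}\to L$.

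For sufficiency, the cross-scale step you expect to be hardest is routine. Each entry of $S_\Lambda$ depends only on $(j,s)$ and the product $z_iz_r$. So the derivative Karamata estimate gives $|(S_\Lambda)_{(i,j),(r,s)}-L(S_{\N_0})_{(i,j),(r,s)}|\le\epsilon|(S_{\N_0})_{(i,j),(r,s)}|$ uniformly for $i,r\ge M$. The lacunarity $(1-z_{i+1})/(1-z_i)<c$ makes $|S_{\N_0}|$ Schur bounded (Lemma~\ref{L:frame_operator2}). Comparing directly with $LS_{\N_0}$, rather than a rescaled lattice, and applying Lemma~\ref{L:PD_op} gives the lower bound on $W_M$.

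Your handling of the finitely many remaining blocks also works: $S_\Lambda$ is an invertible operator plus a finite-rank one, and injectivity comes from a $C^m$ M\"untz theorem. This is a valid alternative to Proposition~\ref{P:outer_frame_to_frame}, close in spirit to the paper's second proof.
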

 \noindent We will clarify what is meant by noninteger powers of $J$ in Section \ref{S:notation}. 
 
 It is worth mentioning that sequences $\Lambda$ of positive natural density are not necessarily syndetic (or even piecewise syndetic, see \cite{BHR22}). Thus, Theorem \ref{T:density-carleson} is a novel strengthening of Theorem \ref{T:intro_thm}(\textit{ii}) even in the case $J = D$ is a diagonal operator. 

 The redundancy question addressed by the above theorem may also be viewed as a problem of stability under irregular sampling in time. Starting with a frame obtained from observations at every nonnegative integer time, one asks which sets of sampling times still permit stable reconstruction. Our main result shows that, for block diagonal Carleson frames with positive spectrum and for sampling sequences having a well-defined natural density, the answer is governed by the asymptotic sampling rate rather than by the local geometry of the sampling set.

We end the paper in Section \ref{S:completeness} with results regarding the completeness of subsequences of block diagonal Carleson frames. We extend some results from \cite{KM25d} to the case of block diagonal operators, and provide a second proof of Theorem \ref{T:density-carleson} using these completeness results. 

\begin{theorem}\label{T:intro_completeness}
    Let $\{J^kg\}_{k = 0}^\infty$ be a block diagonal Carleson frame with $\sigma_p(J) \subset (0,1)$. The M\"untz condition 
    \[
    \sum_{k = 0}^\infty \frac{\lambda_k}{\lambda_k^2 + 1} = \infty
    \]
    is sufficient, but not necessary for the completeness of sequences of the form $\{J^{\lambda_k} g\}_{k = 0}^\infty$. 
\end{theorem}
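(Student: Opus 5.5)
Both halves of the statement go through the standard model of a Carleson frame as a Hardy-space interpolation problem. The analysis operator $f \mapsto \{\langle f, J^{\lambda_k} g\rangle\}_k$ is, after an isomorphism of $\ell^2$, essentially the map $h \mapsto \{h(t_k)\}$, or a weighted version of it, on a subspace of a space of Dirichlet-type series. So completeness of $\{J^{\lambda_k} g\}$ is equivalent to a uniqueness statement: if $\langle f, J^{\lambda_k} g\rangle = 0$ for all $k$, then $f=0$.

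First I compute $\langle f, J^{\lambda} g\rangle$ blockwise. On $V_i$, write $J_{n_i}(z_i) = z_i I + (1-z_i^2)N$ with $N$ nilpotent and $z_i \in (0,1)$. Then
\[
J_{n_i}(z_i)^\lambda = \sum_{m<n_i} \binom{\lambda}{m} z_i^{\lambda-m}(1-z_i^2)^m N^m ,
\]
which is consistent with the paper's definition of noninteger powers. Hence
\[
F(\lambda) := \langle f, J^\lambda g\rangle = \sum_i \sum_{m<n_i} a_{i,m}\, \lambda^{m} z_i^{\lambda},
\]
where the block-triangular reparametrisation of $\binom{\lambda}{m}z_i^{-m}$ into powers of $\lambda$ is absorbed into the coefficients. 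Substituting $x = e^{-t}$, or simply viewing $z_i^\lambda$ as $x_i^{\lambda}$ on $(0,1)$, $F$ becomes a generalized Dirichlet series in $\lambda$ of bounded multiplicity. Theorem \ref{T:characterization}, via the frame property for integer $k$, gives $\sum_k |F(k)|^2 \asymp \|f\|^2$, so $F$ is controlled.

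\emph{Sufficiency.} By the frame property, the map $f \mapsto \{F(k)\}$ is injective and $F$ extends to a bounded holomorphic function in the right half-plane $\Re\lambda>0$, with growth bounded by $C\|f\|$ uniformly on $\Re\lambda \ge 1$. Indeed $\sum_i (1-z_i^2)|z_i|^{\Re\lambda}$ is controlled by the Carleson measure property, using $\sup n_i<\infty$ and the boundedness of $\lambda^m z_i^{\lambda}$ weighted by $(1-z_i)^{m}$. If $F(\lambda_k)=0$ for all $k$, then under the Müntz condition the classical Blaschke condition for the half-plane, $\sum \Re\lambda_k/(|\lambda_k|^2+1)<\infty$, fails. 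A bounded holomorphic function on the half-plane that vanishes on a non-Blaschke set is identically zero, so $F\equiv0$. Then $F(k)=0$ for every integer $k$, and the integer frame property gives $f=0$. The step I expect to be the main obstacle is this one: proving the uniform boundedness of $F$ on a half-plane in the presence of the polynomial factors $\lambda^m$. Since $|\lambda|^m$ grows along vertical lines, I would instead bound $F(\lambda)/(\lambda+1)^{M}$ with $M=\sup n_i$. Dividing by this factor does not change the zero set, so the Blaschke argument still applies.

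\emph{Non-necessity.} I would take $\lambda_k$ equal to the set of integers in a union of sparse blocks, for example $[2^{2j}, 2^{2j}+2^{2j-1}]$. This set has positive lower density, yet it is arranged so that the Müntz sum is still finite. The cleaner choice is $\lambda_k$ lacunary in bulk but containing arithmetic progressions whose counting function satisfies $N_\Lambda(x)\gtrsim x$ on a sequence of $x$. However, a positive-natural-density sequence has $\sum 1/\lambda_k=\infty$, so I will not rely on density. Instead I would use the structure of a specific BDC frame. Take $J=D$ diagonal with $z_i = 1-2^{-i}$; this is Carleson, and the case is also covered by \cite{KM25d}. Completeness is then equivalent to the statement that no nonzero $\ell^2$-type combination $\sum a_i z_i^{\lambda}$ vanishes on $\Lambda$. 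With a very sparse $\Lambda$, for example $\lambda_k = 2^{k}$, the values $z_i^{2^k}$ localize: $z_i^{2^k}\approx 1$ for $i\gg k$ and $\approx 0$ for $i\ll k$. This makes the system triangular-like and yields injectivity by an inductive argument. That is the expected counterexample, and I would borrow the corresponding example from \cite{KM25d}, noting that the block case contains the diagonal case with all $n_i=1$.
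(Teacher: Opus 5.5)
Your sufficiency half is correct, but it argues differently from the paper. The paper turns an annihilating vector into a nonzero bounded functional $h\mapsto\sum_{i,j}\overline{b_{i,j}}\frac{(1-z_i^2)^{j+1/2}}{j!}h^{(j)}(z_i)$ on $C^{n_*-1}[z_0,1]$. That functional kills every $x^{\lambda_k}$, which contradicts the $C^m$ M\"untz theorem (Proposition \ref{P:muntz_Cm}).

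You instead note that $F$ is entire with $|F(\lambda)|\le C\|f\|(1+|\lambda|)^{n_*-1}$ on $\Re\lambda\ge 0$. This bound uses only $\inf_i z_i>0$ and $\sum_i(1-z_i^2)<\infty$, not a Carleson measure estimate. So $F/(\lambda+1)^{n_*}$ is bounded there, divergence of the half-plane Blaschke sum forces $F\equiv 0$, and the integer frame property gives $f=0$. This is the complex-analytic proof of M\"untz's theorem applied directly. It is shorter, needs no separate check that a functional is nonzero, and handles a finite accumulation point automatically. You should drop your intermediate claim that $F$ itself is bounded.

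The genuine gap is the non-necessity half, on two counts.

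\emph{Scope.} The statement fixes an arbitrary BDC frame with $\sigma_p(J)\subset(0,1)$ and asserts that for \emph{that} frame M\"untz is not necessary; this is what Proposition \ref{P:Yu} proves. One diagonal example with $z_i=1-2^{-i}$ says nothing about a given block diagonal $J$ or a differently distributed spectrum.

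\emph{The example is not established.} The ``triangular'' picture does not work, for three reasons.
\begin{enumerate}
\item The weights $z_i^{2^k}$ are of order one across the transition $i\approx k$, and $1-z_i^{2^k}\approx 2^{k-i}$ decays only geometrically. So differencing consecutive rows gives a spread-out kernel, not $a_k$ plus a small error.
\item An infinite upper triangular system has no base case for induction and can have nonzero $\ell^2$ kernel; for example $a_k-2a_{k+1}=0$ is solved by $a_k=2^{-k}$.
\item After normalizing the $k$th vector by $2^{k/2}$, the analysis matrix is a compact perturbation of the Toeplitz matrix $\bigl(\sqrt2\,2^{(k-i)/2}e^{-2^{k-i}}\bigr)_{k,i}$. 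Its injectivity is therefore a genuine Fredholm question (symbol, winding number, possible kernel created by the perturbation), not an induction.
\end{enumerate}
There is also no such example in \cite{KM25d} to borrow; the paper presents the failure of necessity as new.

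The missing idea is the paper's nonconstructive argument.
\begin{enumerate}
\item By Yu's theorem (Theorem \ref{T:pu-ting}), some $\Lambda\subset\N_0$ makes $\{J^{\lambda_k}g/\|J^{\lambda_k}g\|\}$ a frame, so $\{J^{\lambda_k}g\}$ is complete.
\item By the solution of the Feichtinger conjecture, this normalized frame is a finite union of Riesz sequences.
\item If $\Lambda$ satisfied the M\"untz condition, one piece $\{\mu_k\}$ would too. By your own sufficiency half, $\{J^{\mu_k}g\}_{k\ge 1}$ would still be complete.
\item That makes the piece non-minimal, contradicting that it is a Riesz sequence. Hence $\Lambda$ must fail the M\"untz condition, for every such frame.
\end{enumerate}
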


The proofs of the above will require several preparatory results. Those that may be useful beyond the current study are gathered in Section \ref{S:prelims}. They include a characterization of the commutant for infinite matrices in Jordan form, a version of the M\"untz-Sz\'asz Theorem for $C^m[0,1]$, and a derivative version of Karamata's Tauberian Theorem. Some more technical lemmas are relegated to the appendices. 

 \subsection{Notation and Conventions}\label{S:notation}
All sequences will be indexed over $\N_0 = \N \cup \{0\}$ for simplicity. As a consequence, it is convenient to index the entries of $n \times n$ matrices $X$ over $ij$ where $0\leq i ,j \leq n-1$. Moreover, the only infinite dimensional Hilbert space we consider is  $\ell^2(\N_0)$. The standard orthonormal basis for $\ell^2(\N_0)$ is denoted by $\{\delta_i\}_{i \in \N_0}$. 

Given a sequence $\{x_i\}_{i \in \N_0}$, by $\{x_i\}_{i \in \N_0} \subset S$ we mean that $x_i \in S$ for all $i \in \N_0$, but not necessarily that the $x_i$ are distinct. For a sequence $\Lambda = \{\lambda_k\}_{k = 0}^\infty$, the function $N_\Lambda(x)$ denotes the counting function of $\Lambda$: 
\begin{equation}\label{E:counting_fnc}
    N_\Lambda(x) = |\{k \mid \lambda_k \leq x\}|.
\end{equation}

By a \textit{Jordan block} we will always mean a matrix of the form \eqref{E:jordan_block} unless otherwise stated. Thus, by a \textit{Jordan chain} associated to a square matrix $X$, we mean a sequence of vectors $v_0,...,v_r$ such that 
\[
Xv_{i} = zv_{i} + (1-|z|^2)v_{i+1}
\]
 for $i < r$ and $X v_{r} = z v_{r}$. The reason for this unorthodox choice of Jordan form will become clear in Section \ref{S: BDCF}. 

 We will need to consider the function space $C^m[a,b]$ of $m$-times continuously differentiable functions on the interval $[a,b]$. In this paper, the norm on this space will always be taken to be
\[
\|f\|_{C^m} = \|f\|_{\infty} + \|f'\|_{\infty}+ \cdots + \|f^{(m)}\|_{\infty}.
\]
For operators, we will always use the notation $\|T\| = \|T\|_{op}$.

Finally, we define noninteger powers of Jordan blocks. For $z\neq 0$, write $J_{n}(z) = zI_{n} + (1-|z|^2)J_{n}(0)$ where $I_n$ is the $n\times n$ identity matrix. Then for $\lambda \in \CC$
\begin{equation}\label{E:frac_powers}
    J^\lambda_{n}(z) := \sum_{j =0 }^{n-1} \binom{\lambda}{j}  z^{\lambda-j} (1-|z|^2)^j J_{n}^j(0)
\end{equation}
where $z^\lambda$ is always defined via the principal branch, and 
\[
\binom{\lambda}{j} = \frac{\lambda(\lambda-1)\cdots (\lambda-j+1)}{j!}
\]
and $\binom{\lambda}{0} = 1$. For $z = 0$,  the operators $J^\lambda_n(0)$ may be defined via \eqref{E:frac_powers} for complex values with $\Re(\lambda) > n-1$. 

It is routine to check that $J_n^{\lambda_1}(z)J_n^{\lambda_2}(z) = J_n^{\lambda_1+\lambda_2}(z)$.

\section{Preliminaries}\label{S:prelims}

\subsection{Infinite Jordan Matrices}\label{s:jordan}

To classify all BDC frames, we will need to understand the commutant of infinite block diagonal matrices $J$ with blocks in Jordan form and of bounded size. This is done in Proposition \ref{P:commutant}.

\begin{lemma}\label{L:norm-est}
    Let $J$ be a finite Jordan block of size $n\times n$. An $n\times n$ matrix $X$ commutes with $J$ if and only if $X$ is a lower-triangular Toeplitz matrix. That is, $X$ has the form
     \[
X= \begin{bmatrix}
c_{0}     & 0      & 0      & \cdots & 0 \\
c_{1}     & c_{0}   & 0      & \cdots & 0 \\
c_{2}     & c_{1}    & c_{0}   & \cdots & 0 \\
\vdots  & \vdots & \vdots & \ddots & \vdots \\
c_{n-1} & c_{n-2} & c_{n-3} & \cdots & c_{0}
\end{bmatrix}.
\]
for some scalars $c_0,...,c_{n-1}$. For such a matrix $X$, we have 
\begin{align}
    \max_{i} |c_i| \leq \|X\| &\leq n\sqrt{n} \max_{i} |c_i| \label{ineq:norm_est},\\
    \|X^{-1}\| &\leq \frac{n^{\frac{n+3}{2}}(\max\limits_i |c_i|)^{n-1} }{|c_0|^n} \label{ineq:inverse_est},
\end{align}
where the estimate on $\|X^{-1}\|$ holds whenever $c_0 \neq 0$.
\end{lemma}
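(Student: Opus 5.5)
The plan is to reduce everything to the nilpotent shift $S := J_n(0)$, whose only nonzero entries are $1$'s on the subdiagonal (here $S$ is the standard lower shift; the paper's $J_n(0)$ has subdiagonal $1-|0|^2=1$). Write $J = zI_n + (1-|z|^2)S$. When $|z|<1$ we have $1-|z|^2\neq 0$, so $X$ commutes with $J$ if and only if $X$ commutes with $S$.

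\textbf{Step 1: commutation with $S$.} Compare entries of $SX$ and $XS$. We have $(SX)_{ij} = X_{i-1,j}$ for $i\ge1$ (and $0$ when $i=0$), while $(XS)_{ij} = X_{i,j+1}$ for $j\le n-2$ (and $0$ when $j=n-1$). Equating these gives three facts:
\begin{itemize}
\item for $i=0$, $j\le n-2$: $X_{0,j+1}=0$;
\item for $j=n-1$, $i\ge1$: $X_{i-1,n-1}=0$;
\item for $i\ge 1$, $j\le n-2$: $X_{i-1,j}=X_{i,j+1}$, so entries are constant along diagonals.
\end{itemize}
Together these force $X$ to be lower triangular Toeplitz, i.e.\ $X = \sum_{k=0}^{n-1} c_k S^k$. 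Conversely, any polynomial in $S$ commutes with $S$, which gives the equivalence.

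\textbf{Step 2: the norm bounds \eqref{ineq:norm_est}.} For the lower bound, note that every $c_i$ is an entry of $X$, and $|X_{ij}| = |\langle X\delta_j,\delta_i\rangle| \le \|X\|$. For the upper bound, use $\|X\|\le \|X\|_F$, the Frobenius norm. $X$ has at most $n^2$ entries, each bounded by $M:=\max_i|c_i|$, so $\|X\|_F \le nM \le n\sqrt n\, M$. Alternatively, one can write $X=\sum_k c_k S^k$ and use $\|S^k\|\le 1$ to get $\|X\|\le nM$. Either route gives the stated bound.

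\textbf{Step 3: the inverse bound \eqref{ineq:inverse_est}.} Use the adjugate formula $X^{-1} = \operatorname{adj}(X)/\det X$, where $\det X = c_0^n$. Each entry of $\operatorname{adj}(X)$ is an $(n-1)\times(n-1)$ minor. By Hadamard's inequality, such a minor is bounded by the product of its column norms, each at most $\sqrt{n-1}\,M$. Hence each entry is at most
\[
(n-1)^{(n-1)/2}M^{n-1} \le n^{(n-1)/2}M^{n-1}.
\]
Then $\|\operatorname{adj}X\|\le \|\operatorname{adj} X\|_F \le n\cdot n^{(n-1)/2}M^{n-1} = n^{(n+1)/2}M^{n-1}$. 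Dividing by $|c_0|^n$ gives a bound even better than the claimed $n^{(n+3)/2}M^{n-1}/|c_0|^n$.

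A second route is to note that $X^{-1}$ is again lower triangular Toeplitz, being a polynomial in $S$. Its coefficients $d_k$ satisfy the recursion $c_0 d_k = -\sum_{l=1}^k c_l d_{k-l}$. Combined with the upper bound of Step 2, this also works, but bounding the $d_k$ inductively produces messier constants, so I would use the adjugate approach.

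The only mildly delicate point is the exponent bookkeeping in Step 3, and there is slack there. I expect no real obstacle.
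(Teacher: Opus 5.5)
Your proposal is correct and follows essentially the same route as the paper: entrywise bounds give the estimates on $\|X\|$, and the adjugate formula with Hadamard's inequality gives the estimate on $\|X^{-1}\|$. The differences are minor. You prove the commutant characterization via $J = zI_n + (1-|z|^2)S$ with $|z|<1$, whereas the paper omits it as standard. You also pass through the Frobenius norm rather than $\sqrt{n}$ times the maximum row-sum norm, which gives slightly sharper constants than the ones stated.
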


\begin{proof}
    That the commutant of a Jordan block consists of the triangular Toeplitz matrices is a standard linear algebra fact so we omit the proof.  

    The lower bound in \eqref{ineq:norm_est} is derived by observing that the vector $[c_0,...,c_{n-1}]^\top$ is the image of the first standard basis vector under $X$. The upper bound is an easy calculation: 
    \[
\|X\| \leq \sqrt{n} \|X\|_{\infty} \leq  \sqrt{n}\max_{i}\sum_{j = 0}^{n-1} |X_{ij}| \leq n\sqrt n \max_{i} |c_i|.
\] 

To derive \eqref{ineq:inverse_est}, note $X^{-1} = \text{adj}(X)\cdot(1/\det(X)) = \text{adj}(X)/c_0^n$. The $ij$-entry of the matrix $\text{adj}(X)$ is $(-1)^{i+j}M_{ij}$ where $M_{ij}$ is the $(i,j)$-minor of $X$. By Hadamard's inequality, one estimates
\[
|M_{ij}| \leq n^{n/2}(\max_i |c_i|)^{n-1}.
\]
Therefore we conclude that 
\begin{align*}
\|X^{-1}\| &\leq \sqrt{n}\|X^{-1}\|_{\infty} \\
&\leq \ \frac{\sqrt{n} \|\text{adj}(X)\|_{\infty}}{|c_0|^n} \\
&\leq \frac{\sqrt{n} \max\limits_{i} \sum_{j = 0}^{n-1} | M_{ij}|}{|c_0|^n} \\
&\leq \frac{n\sqrt{n}\cdot n^{n/2}(\max\limits_i |c_i|)^{n-1} }{|c_0|^n} \\
&= \frac{n^{\frac{n+3}{2}}(\max\limits_i |c_i|)^{n-1} }{|c_0|^n}
\end{align*}
as claimed.
\end{proof}

\begin{lemma}
    Let $\{z_i\}_{i \in \N_0}\subset \D$ be a Carleson sequence. Consider the operator $J = \bigoplus\limits_{i \in \N_0} J_{n_i}(z_i)$ on $\ell^2(\N_0)$ with $\sup\limits_{i \in \N_0} n_i = n_* < \infty$. If $z_i \neq 0$ for every $i \in \N_0$, then $J^\lambda$ is a bounded, continuously invertible operator on $\ell^2(\N_0)$ for all $\lambda \in \CC$. If $z_0 = 0$, then $J^\lambda$ is bounded if $\lambda \in \N_0$ or $\Re(\lambda) > n_0 - 1$.
\end{lemma}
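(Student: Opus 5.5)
Since $J^\lambda$ is block diagonal, I plan to prove that $J^\lambda = \bigoplus_i J^\lambda_{n_i}(z_i)$ is bounded by showing $\sup_i \|J^\lambda_{n_i}(z_i)\| < \infty$. For invertibility I will show the same for the inverses. By the identity $J_n^{\lambda_1}(z)J_n^{\lambda_2}(z) = J_n^{\lambda_1+\lambda_2}(z)$, the inverse of each block is $J^{-\lambda}_{n_i}(z_i)$, so it suffices to bound the blocks uniformly for every $\lambda\in\CC$.

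Each block $J^\lambda_{n_i}(z_i)$ is lower-triangular Toeplitz with entries $c_j = \binom{\lambda}{j} z_i^{\lambda-j}(1-|z_i|^2)^j$ for $0\le j\le n_i-1$. By \eqref{ineq:norm_est} of Lemma \ref{L:norm-est},
\[
\|J^\lambda_{n_i}(z_i)\| \le n_*\sqrt{n_*}\,\max_{j<n_*}\Big|\tbinom{\lambda}{j}\Big|\,\max_{j<n_*}|z_i^{\lambda-j}|(1-|z_i|^2)^j .
\]
So I need to bound $|z^{\lambda-j}| = |z|^{\Re\lambda - j} e^{-\Im\lambda\,\arg z}$ uniformly in $i$, using the principal branch with $|\arg z|\le\pi$. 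The factor $e^{-\Im\lambda\arg z}$ is at most $e^{\pi|\Im\lambda|}$. The modulus factor $|z_i|^{\Re\lambda-j}$ is harmless when $|z_i|$ is bounded away from $0$; it lies between $\min(1,\delta^{\Re\lambda - j})$-type constants, since $|z_i|<1$.

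The key step is therefore to show that $\inf_i |z_i| > 0$ when all $z_i\neq 0$. This is where the Carleson hypothesis enters, and I expect it to be the main obstacle. A Carleson sequence is uniformly separated in the pseudohyperbolic metric, with $\rho(z_j,z_k)\ge\delta>0$ for $j\ne k$, and its points are distinct. Hence at most one point lies in the pseudohyperbolic disc $\{|z|<\delta/2\}$: two such points would have $\rho(z_j,z_k) \le |z_j-z_k|/(1-|z_j||z_k|) < \delta$, using the triangle inequality for $\rho$ through $0$. Since that single point is nonzero, $\inf_i|z_i| \ge \min(\delta/2, |z_{i_0}|) > 0$. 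With $|z_i|\ge\eta$, we get $|z_i|^{\Re\lambda-j}\le \max(1,\eta^{\Re\lambda-j})$ for all $j<n_*$, which gives the uniform bound. Applying the same argument to $-\lambda$ gives bounded inverses.

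When $z_0=0$, the preceding argument applies to the blocks $i\ge1$, since $\inf_{i\ge1}|z_i|>0$ by the same separation argument; so only the single block $J^\lambda_{n_0}(0)$ needs checking. For $\lambda\in\N_0$ it is the ordinary matrix power, hence a finite matrix. For $\Re\lambda>n_0-1$, each term $z^{\lambda-j}$ in \eqref{E:frac_powers} is interpreted as $0$ at $z=0$, because $\Re(\lambda-j)>0$; so the block is the zero matrix, or at least a well-defined finite matrix. Either way it is bounded, and a finite-rank perturbation of a bounded operator is bounded. Invertibility is not claimed in this case.
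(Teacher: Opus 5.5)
Your proposal is correct and is essentially the paper's proof. Both arguments bound the blocks uniformly via \eqref{E:frac_powers}, using $|z_i^{\lambda-j}|\le e^{\pi|\Im(\lambda)|}|z_i|^{\Re(\lambda)-j}$ together with $\inf_i|z_i|>0$, obtain invertibility by applying the same bound to $-\lambda$, and treat $J^\lambda_{n_0}(0)$ as a single finite block when $z_0=0$.

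You go slightly beyond the paper by justifying $\inf_i|z_i|>0$ from uniform separation; the paper only asserts it. However, your displayed middle quantity $|z_j-z_k|/(1-|z_j||z_k|)$ need not be $<\delta$ when $|z_j|,|z_k|<\delta/2$ (try $z_k=-z_j$ with $|z_j|$ close to $\delta/2$). Drop it and use the metric triangle inequality directly: $\rho(z_j,z_k)\le\rho(z_j,0)+\rho(0,z_k)=|z_j|+|z_k|<\delta$.
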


\begin{proof}
    First assume $z_i \neq 0$ for every $i$, then
    \[
\|J_{n_i}^{\lambda}(z_i)\|
\leq
\sum_{j=0}^{n_*-1}
\left|\binom{\lambda}{j}\right|
|z_i^{\lambda-j}| \leq \sum_{j = 0}^{n_*-1} \bigg| \binom{\lambda}{j}\bigg|e^{\pi|\Im(\lambda)|}|z_i|^{\Re(\lambda)-j}.
\]
    Since $\{z_i\}_{i \in \N_0}$ is a Carleson sequence, we have $\inf\limits_{i \in \N_0} |z_i| = \delta > 0$. Thus we have 
    \[
    \|J^{\lambda}_{n_i}(z_i)\|\leq \sum_{j = 0}^{n_*-1} \bigg| \binom{\lambda}{j}\bigg|e^{\pi|\Im(\lambda)|}|z_i|^{\Re(\lambda)-j} \leq \sum_{j = 0}^{n_*-1} \bigg| \binom{\lambda}{j}\bigg|e^{\pi|\Im(\lambda)|}\max\{1,\delta^{\Re(\lambda) - n_*+1}\}
    \]
    which is uniform over $i$. This shows that $\bigoplus\limits_{i \in \N_0} J^\lambda_{n_i}(z_i)$ is bounded, and applying the same argument to $-\lambda$ shows this operator is invertible. 

    When $z_0 = 0$, $J = J_{n_0}(0) \oplus \bigoplus\limits_{i \geq 1} J_{n_i}(z_i)$. The latter summand is bounded by the preceding argument, so the claim follows since $J_{n_0}^\lambda(0)$ is well defined for $\lambda \in \N_0$ and $\Re(\lambda) > n_0 - 1$.
\end{proof}

\begin{proposition}\label{P:commutant}
    Let $\{z_i\}_{i \in \N_0} \subset \D$ be a Carleson sequence and $\{n_i\}_{i \in \N_0} \subset \N$ a bounded sequence of natural numbers. Let $J = \bigoplus\limits_{i \in \N_0} J_{n_i}(z_i)$. Then a bounded operator $X$ on $\ell^2(\N_0)$ satisfies $XJ = JX$ if and only if $X = \bigoplus\limits_{i \in \N_0} X_i$ where $X_i$ is an $n_i \times n_i$ lower-triangular Toeplitz matrix: 
    \begin{equation}\label{E:inf-toeplitz}
X_i = \begin{bmatrix}
c_{i,0}     & 0      & 0      & \cdots & 0 \\
c_{i,1}     & c_{i,0}   & 0      & \cdots & 0 \\
c_{i,2}     & c_{i,1}    & c_{i,0}   & \cdots & 0 \\
\vdots  & \vdots & \vdots & \ddots & \vdots \\
c_{i,n_i-1} & c_{i,n_i-2} & c_{i,n_i-3} & \cdots & c_{i,0}
\end{bmatrix}
\end{equation}
and $\sup\limits_{i,j} |c_{i,j}| < \infty$. Furthermore, such a bounded matrix $X$ is invertible if and only if $\inf\limits_{i} |c_{i,0}| > 0$.
\end{proposition}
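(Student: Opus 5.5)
Write $X$ in block form $X = (X_{ik})_{i,k\in\N_0}$ with respect to the decomposition $\ell^2(\N_0)\cong\bigoplus_i V_i$, where $X_{ik}: V_k\to V_i$. The sufficiency direction is immediate. If $X=\bigoplus_i X_i$ with each $X_i$ lower-triangular Toeplitz and $\sup_{i,j}|c_{i,j}|<\infty$, then Lemma \ref{L:norm-est} gives $\|X_i\|\le n_*\sqrt{n_*}\sup|c_{i,j}|$, so $X$ is bounded. Each $X_i$ commutes with $J_{n_i}(z_i)$, hence $XJ=JX$ blockwise.

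For necessity, the relation $XJ=JX$ reads blockwise as
\[
X_{ik}J_{n_k}(z_k)=J_{n_i}(z_i)X_{ik}.
\]
For $i\ne k$, the Carleson condition forces $z_i\neq z_k$, so the two matrices have disjoint spectra. By the Sylvester equation theorem, $AY=YB$ with $\sigma(A)\cap\sigma(B)=\emptyset$ has only the trivial solution, so $X_{ik}=0$. A direct argument also works: apply $(J_{n_i}(z_i)-z_kI)^{n_k}$ and use the nilpotency of $J_{n_k}(z_k)-z_k I$. Thus $X=\bigoplus_i X_{ii}$. By Lemma \ref{L:norm-est}, each $X_{ii}$ is a lower-triangular Toeplitz matrix with entries $c_{i,j}$. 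The lower bound in \eqref{ineq:norm_est} gives $\sup_{i,j}|c_{i,j}|\le\sup_i\|X_{ii}\|\le\|X\|<\infty$.

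For the invertibility criterion, suppose first that $\inf_i|c_{i,0}|=\epsilon>0$. Each $X_i$ is then invertible with determinant $c_{i,0}^{n_i}$. Estimate \eqref{ineq:inverse_est} gives
\[
\|X_i^{-1}\|\le n_*^{(n_*+3)/2}M^{n_*-1}\epsilon^{-n_*}\max(1,\dots),
\]
uniformly in $i$, where $M=\sup|c_{i,j}|$; handle the exponent dependence on $n_i\le n_*$ by taking the worst case over $n\le n_*$. Hence $\bigoplus_i X_i^{-1}$ is a bounded inverse. Conversely, suppose $X$ is invertible. Then each $X_i$ is invertible, since $X$ preserves each $V_i$ and is injective and onto there: the inverse also commutes with $J$, so it is block diagonal by the first part. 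Moreover $\|X_i^{-1}\|\le\|X^{-1}\|$. Since $c_{i,0}$ is an eigenvalue of $X_i$, we get $|c_{i,0}|^{-1}\le\|X_i^{-1}\|\le\|X^{-1}\|$, giving $\inf_i|c_{i,0}|>0$.

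The main subtlety is the off-diagonal vanishing, which is where the Carleson hypothesis enters: it guarantees that the $z_i$ are distinct, so that the Sylvester argument applies. The remaining steps are bookkeeping with the uniform bound $n_*$.
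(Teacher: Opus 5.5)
Your proposal is correct and follows essentially the same route as the paper: the Sylvester equation kills the off-diagonal blocks because a Carleson sequence has distinct points, Lemma \ref{L:norm-est} identifies the diagonal blocks as lower-triangular Toeplitz matrices, and the estimates \eqref{ineq:norm_est} and \eqref{ineq:inverse_est} with $n_i \le n_*$ give the uniform bounds. You also supply a step the paper leaves implicit for the ``only if'' half of the invertibility criterion: $X^{-1}$ commutes with $J$, hence is block diagonal, which gives $|c_{i,0}|^{-1}\le\|X_i^{-1}\|\le\|X^{-1}\|$.
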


\begin{proof}
    We write $X$ as a block matrix $X = (X_{ij})$ where $X_{ij}$ is a $n_j \times n_i$ matrix. That $XJ = JX$ means that $J_{n_j}(z_j)X_{ij} = X_{ij}J_{n_i}(z_i)$ for all $i,j \in \N_0$. If $i \neq j$, then $z_i \neq z_j$, and the existence and uniqueness theorem for Sylvester equations forces $X_{ij} = 0$. 

    Letting $X_i = X_{ii}$, we thus have $X = \bigoplus\limits_{i \in \N_0} X_{i}$ where $X_iJ_{n_i}(z_i) = J_{n_i}(z_i)X_i$. This forces $X_i$ to have the form \eqref{E:inf-toeplitz} by Lemma \ref{L:norm-est}. Since $\sup\limits_{i\in \N_0} n_i < \infty$, the remainder of the proof now follows from the estimates \eqref{ineq:norm_est} and \eqref{ineq:inverse_est}.
\end{proof}

\subsection{A M\"untz-Type Theorem}\label{muntz} The M\"untz-Sz\'asz Theorem has played a central role in questions regarding dynamical frames (see \cite{KM25d, ACCMP17, ACMT17}). The classical theorem states that a system of monomials $\{x^{\lambda_k}\}_{k \in \N_0}$ is complete in $C[0,1]$ if and only if $\lambda_0 = 0$ and 
\begin{equation}\label{E:muntz-condition}
\sum_{k \in \N_0} \frac{\lambda_k}{\lambda^2_k+1} = \infty.
\end{equation}
On an interval away from the origin, e.g. $[a,b]$ for $a > 0$, \eqref{E:muntz-condition} alone is both necessary and sufficient for the completeness of an arbitrary set of monomials $\{x^{\lambda_k}\}_{k \in \N_0}$ (namely, $\lambda_0 = 0$ is not required) \cite{Almira07}.

For our purposes, we will need the M\"untz-Sz\'asz theorem for $C^m[a,b]$. This follows quite easily from the classical M\"untz theorem on $C[a,b]$ by induction on $m$, and is likely already known to experts. However, we could not find a reference so we include the proof here. 

\begin{proposition}\label{P:muntz_Cm}
    Let $ \{\lambda_k\}_{k=0}^\infty \subset [0,\infty)$ be a strictly increasing sequence with $\lambda_k \to \infty$, and suppose $0 \leq a < b < \infty$. 
    \begin{enumerate}
    \item[(i)] If $a = 0$, then $\{x^{\lambda_k}\}_{k=0}^\infty$ is complete in $C^m[0,b]$ if and only if $\{\lambda_k\}_{k \in \N_0}$ satisfies \eqref{E:muntz-condition}, $\lambda_j = j$ for $0 \leq j \leq m$, and $\lambda_j > m$ for $j \geq m+1$. 
    \vspace{2mm}
        \item[(ii)] If $a > 0$, then the set of monomials $\{x^{\lambda_k}\}_{k=0}^\infty$ is complete in $C^m[a,b]$ if and only if $\{\lambda_k\}_{k \in \N_0}$ satisfies \eqref{E:muntz-condition}. 
    \end{enumerate}
\end{proposition}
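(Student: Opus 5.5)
The plan is induction on $m$, using differentiation and integration to pass between $C^m$ and $C^{m-1}$. The base case $m=0$ is the classical M\"untz--Sz\'asz theorem on $C[0,b]$ in part (i), and the version on $[a,b]$ with $a>0$ from \cite{Almira07} in part (ii). Note that in (i) with $m=0$ the conditions read exactly $\lambda_0=0$ together with \eqref{E:muntz-condition}.

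\textbf{Part (ii), sufficiency.} Fix $f\in C^m[a,b]$. Apply the induction hypothesis in $C^{m-1}[a,b]$ to the derivative $f'$, using the system $\{\lambda_k x^{\lambda_k-1}\}$ with exponents $\mu_k=\lambda_k-1$ (discard $\lambda_k=0$ if present). Since $a>0$, negative exponents are harmless: multiplying by $x^{c}$ is an isomorphism of $C^{m-1}[a,b]$, so the exponents can be shifted by a constant without affecting completeness. Shifting by a constant also preserves \eqref{E:muntz-condition}, because $\sum 1/\lambda_k=\infty$ is equivalent to it once $\lambda_k\to\infty$. This gives an approximation of $f'$ in $C^{m-1}$ by $p'$ for some M\"untz polynomial $p$. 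Then
\[
f(x)-p(x)=f(a)-p(a)+\int_a^x (f'-p')(t)\,dt,
\]
so it remains to control the constant $f(a)-p(a)$. For this, approximate the function $1$ in $C^m[a,b]$: applying the same argument to the constant function $1$ (whose derivative is $0$) reduces the task to approximating $1$ in $C[a,b]$ by M\"untz polynomials whose derivatives are uniformly small. The cleanest way I would try first is instead to use the shifted system $x^{c}\cdot\{x^{\lambda_k}\}$ directly, to obtain $x^{0}$ in the closed span. \textbf{This constant-term issue is the step I expect to be the main obstacle.}

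\textbf{Part (ii), necessity.} This is immediate, since $C^m$-density implies $C$-density, and density in $C[a,b]$ already forces \eqref{E:muntz-condition}.

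\textbf{Part (i), sufficiency.} If $\lambda_j=j$ for $j\le m$, then the polynomials $1,x,\dots,x^m$ are available, so Taylor data at $0$ of any order up to $m$ can be matched exactly. For $f\in C^m[0,b]$, subtract its Taylor polynomial of degree $m$. Then approximate $f^{(m)}$ (which now vanishes at $0$) in $C[0,b]$ by the system $\{x^{\lambda_k-m}\}_{k\ge m+1}\cup\{1\}$; this system is complete by the classical theorem, since $\lambda_k-m>0$ and the divergence condition is preserved. Integrating $m$ times from $0$ then yields $C^m$ approximation. The small constant error coming from the $1$ term is absorbed by the available monomial $x^m$.

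\textbf{Part (i), necessity.} Evaluation of $f^{(j)}$ at $0$ is a continuous functional on $C^m[0,b]$. For $\lambda\notin\{0,\dots,m\}$ with $\lambda$ not exceeding $m$, the monomial $x^\lambda$ is not in $C^m[0,b]$, which forces $\lambda_k\in\{0,1,\dots,m\}\cup(m,\infty)$. Moreover, if some $j\le m$ is missing from the sequence, then the functional $f\mapsto f^{(j)}(0)$ annihilates every $x^{\lambda_k}$ but not $x^j$, so the system cannot be complete. Because the sequence is strictly increasing, these facts give $\lambda_j=j$ for $j\le m$ and $\lambda_j>m$ afterwards. Finally, \eqref{E:muntz-condition} follows from completeness in $C[0,b]$.
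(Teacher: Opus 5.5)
The only unfinished step is the one you flag in part (ii), and only your second suggestion resolves it. The first suggestion, approximating $1$ in $C^m[a,b]$ ``by the same argument'', is circular, because that argument already needs the constant function to be available.

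The shift closes the gap if you take $c=-\lambda_0$ and apply it on $C^m[a,b]$ itself; you only invoked it on $C^{m-1}[a,b]$ for the derivative system. Since $a>0$, the map $h\mapsto x^{-\lambda_0}h$ is an isomorphism of $C^m[a,b]$. So $\{x^{\lambda_k}\}$ is complete if and only if $\{x^{\mu_k}\}$ is, where $\mu_k=\lambda_k-\lambda_0$. The $\mu_k$ still satisfy \eqref{E:muntz-condition}, and $x^{\mu_0}=1$ is now an element of the system, not merely of its closed span.

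Now discard the finitely many $k\ge1$ with $\mu_k\le1$. The induction hypothesis applied to $\{x^{\mu_k-1}\}$ gives $p=\sum_{k\ge1}c_kx^{\mu_k}$ with $\|f'-p'\|_{C^{m-1}}<\epsilon$. Then $g=p+f(a)-p(a)$ lies in the span, $g(a)=f(a)$, and $\|f-g\|_{C^m}\le(1+b-a)\epsilon$. This normalization is exactly the one the paper makes, and with it your part (ii) is the paper's proof.

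You handle part (i) by a different but equally valid route. The paper keeps (i) inside the same induction. There $\lambda_0=0$ supplies the constant, and the exponents $\{\lambda_k-1\}_{k\ge1}$ again satisfy the hypotheses of (i) with $m-1$ in place of $m$, so one integrates once from $0$.

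You instead subtract the degree-$m$ Taylor polynomial at $0$, approximate $h^{(m)}$ in $C[0,b]$ from the span of $\{1\}\cup\{x^{\lambda_k-m}\}_{k\ge m+1}$, and integrate $m$ times from $0$. This reduces (i) to the classical theorem in one step, with no induction. No smallness of the constant coefficient is needed, since its $m$-fold integral $x^m/m!$ is already in the span.

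Your necessity arguments are also more explicit than the paper's brief appeal to $C^m[a,b]\cong\CC^m\oplus C[a,b]$. They are: density in $C^m$ implies density in $C$; $x^\lambda\notin C^m[0,b]$ for non-integer $\lambda<m$; and the functionals $h\mapsto h^{(j)}(0)$ separate the missing integer exponents. The paper's argument leaves the exclusion of non-integer exponents below $m$ implicit.
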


\begin{proof}
     We proceed by induction on $m$ and prove both statements at the same time. Assume $\{x^{\lambda_k}\}_{k \in \N_0}$ satisfies \eqref{E:muntz-condition}. If $a = 0$, further assume that $\lambda_j = j$ for $0 \leq j \leq m$ and $\lambda_j > m$ for $j\geq m+1$ . If $a > 0$, then multiplication by $x^{-\lambda_0}$ is a bounded isomorphism of $C^m[a,b]$, so we may always assume $\lambda_0 = 0$. We can further discard finitely many $\lambda_k$ and assume $\lambda_k > 1$ for all $k\geq 1$.

    Let $f \in C^m[a,b]$ be arbitrary. By the induction hypothesis, the set of monomials $\{x^{\lambda_k - 1}\}_{k=1}^\infty$ is dense in $C^{m-1}[a,b]$. Thus, given $\epsilon > 0$, there exists a finite sequence of scalars $c_1,...,c_N$ such that 
    \[
    \|f' - \sum_{k = 1}^N c_k \lambda_k x^{\lambda_k-1}\|_{C^{m-1}} < \epsilon.  
    \]
    Now define 
    \[
    g(x) = f(a) + \sum_{k=1}^N c_k (x^{\lambda_k} - a^{\lambda_k}).
    \]
    Then $g \in \text{span}\{x^{\lambda_k}\}_{k=0}^\infty$, $g(a) = f(a)$, and $\|g' - f'\|_{C^{m-1}} < \epsilon$. 

    We show that $g$ well-approximates $f$ in $C^m[a,b]$. Using that $g(a) = f(a)$, we have 
    \begin{align*}
        \|g - f\|_{C^m} &= \|g-f\|_{\infty} + \|g'-f'\|_{C^{m-1}} \\
        &\leq \sup_{x \in [a,b]} |g(x) - f(x)| + \epsilon \\
        &\leq \sup_{x\in [a,b]} \int_a^x |g'(t) - f'(t)| dt + \epsilon \\
        &\leq \epsilon(b-a+1).
    \end{align*}
    Therefore $\{x^{\lambda_k}\}_{k \in \N_0}$ is complete in $C^m[a,b]$.

    The necessity of the M\"untz condition follows from the classical M\"untz-Sz\'asz Theorem and the standard isomorphism 
    \[
    C^m[a,b] \cong \CC^{m} \oplus C[a,b]
    \]
    given by $f \mapsto (f(a),...,f^{(m-1)}(a),f^{(m)})$.
    Indeed, if the series \eqref{E:muntz-condition} converges, then the set of monomials 
    \[
    \{x^{\lambda_k}\}_{k \in \N_0} \mapsto \{(a^{\lambda_k},...,(m-1)!\binom{\lambda_k}{m-1}a^{\lambda_k-m+1}, m!\binom{\lambda_k}{m}x^{\lambda_k-m})\}_{k \in \N_0}
    \]
    is clearly incomplete in $C^m[a,b]$. This isomorphism also demonstrates that if $a = 0$, we must have $\lambda_j = j$ for $0 \leq j \leq m$ to span the finite dimensional summand $\CC^{m}$.
\end{proof}

\subsection{Karamata's Tauberian Theorem}\label{s:karamata} A hallmark theorem in Tauberian theory, the Hardy-Littlewood-Karamata theorem relates the density of a sequence of points in $[0,\infty)$ with an asymptotic value of the Laplace transform of its counting function. More precisely, let $\Lambda = \{\lambda_k\}_{k = 0}^\infty$ be a strictly increasing sequence of points in $[0,\infty)$ and $N_\Lambda$ its counting function as defined in \eqref{E:counting_fnc}. Adapted to our purposes, the theorem states \cite[Theorem~IV.8.1]{K04t}
\begin{equation}\label{E:classic_Karamata}
\lim_{x \to \infty} \frac{N_\Lambda(x)}{x} = \lim_{t\to 0^+} \;t\int_0^\infty e^{-tx} dN_\Lambda(x) = \lim_{t\to 0^+} t\mathscr{L}[d N_{\Lambda}](t),
\end{equation}
where $\mathscr{L}$ denotes the Laplace transform. It is to be understood that if one of the limits should not exist, then none of the limits exist.

 The proof involves two directions: the first assuming the left limit exists and the other assuming the right limit exists. The former of these two is a direct calculation. We will need the following strengthening of this direct calculation, which includes the asymptotics for the derivatives of the Laplace transform of $dN_\Lambda$. 

 \begin{proposition}\label{P:derivative_Karamata}
    Let $\Lambda = \{\lambda_k\}_{k = 0}^\infty \subset [0,\infty)$ be a strictly increasing sequence with $\lambda_k \to \infty$ and denote by $N_\Lambda$ its counting function. If 
    \[
    \lim_{x \to \infty} \frac{N_\Lambda(x)}{x} = L
    \]
    for some value $L \in [0,\infty)$, then 
    \begin{equation}\label{E:karamata_eq}
     \lim_{t \to 0^+} (-1)^n t^{n+1} \big(\mathscr{L}[dN_\Lambda])^{(n)}(t) = \lim_{t \to 0^+} t^{n+1} \int_0^\infty x^n e^{-tx} dN_\Lambda(x) = L \cdot n!.
    \end{equation}
\end{proposition}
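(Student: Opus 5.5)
The plan is a direct Abelian computation: integrate by parts to express the Stieltjes integral against $dN_\Lambda$ through $N_\Lambda$ itself, then substitute $N_\Lambda(x)=Lx+o(x)$. The first equality in \eqref{E:karamata_eq} is just differentiation under the integral sign. For every $t>0$ the integral $\int_0^\infty e^{-tx}\,dN_\Lambda(x)=\sum_k e^{-t\lambda_k}$ converges, because $N_\Lambda(x)=O(x)$. Differentiating $n$ times termwise gives $(-1)^n\sum_k \lambda_k^n e^{-t\lambda_k}$. This is justified by uniform convergence on $[t_0,\infty)$ for each $t_0>0$, using the same linear growth bound on $N_\Lambda$. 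So it suffices to show
\[
\lim_{t\to 0^+} t^{n+1}\int_0^\infty x^n e^{-tx}\,dN_\Lambda(x)=L\cdot n!.
\]

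Next, integrate by parts with $\phi_t(x)=x^ne^{-tx}$. Since $N_\Lambda(x)=O(x)$, the boundary term $\phi_t(x)N_\Lambda(x)$ vanishes as $x\to\infty$. At $x=0$ it is harmless: it is zero when $n\geq 1$, and when $n=0$ it equals $N_\Lambda(0)\in\{0,1\}$, which disappears after multiplying by $t$. This leaves
\[
t^{n+1}\int_0^\infty x^ne^{-tx}\,dN_\Lambda(x)=-t^{n+1}\int_0^\infty N_\Lambda(x)\,\phi_t'(x)\,dx,\qquad \phi_t'(x)=(nx^{n-1}-tx^n)e^{-tx}.
\]

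Now write $N_\Lambda(x)=Lx+r(x)$ with $r(x)/x\to 0$, and treat the two pieces separately.

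\textbf{Main term.} After the substitution $u=tx$, the main term becomes
\[
-L\int_0^\infty (nu^{n}-u^{n+1})e^{-u}\,du = L\bigl((n+1)!-n\cdot n!\bigr)=L\cdot n!.
\]
The powers of $t$ cancel exactly in this substitution, which is a useful check on the bookkeeping.

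\textbf{Error term.} This is the only delicate step, though it is routine. Fix $\epsilon>0$ and choose $X$ with $|r(x)|\le\epsilon x$ for $x\ge X$.

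On $[0,X]$, $r$ is bounded. The contribution there is at most $C_X\, t^{n+1}\int_0^X |\phi_t'|\,dx$. This is $O(t^{n+1})$ when $n\ge1$, and $O(t)$ when $n=0$. In either case it tends to $0$.

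On $[X,\infty)$, the contribution is at most
\[
\epsilon\, t^{n+1}\int_0^\infty x\,|\phi_t'(x)|\,dx=\epsilon\int_0^\infty |nu^{n}-u^{n+1}|e^{-u}\,du=\epsilon\, C_n,
\]
where $C_n$ is a constant depending only on $n$. Letting $t\to0^+$ and then $\epsilon\to0$ shows that the error term contributes nothing in the limit. This proves \eqref{E:karamata_eq}.

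The main obstacle is justifying the integration by parts and the boundary terms uniformly in $t$. Both rest only on the linear growth $N_\Lambda(x)=O(x)$, which follows directly from the existence of the limit $L$.
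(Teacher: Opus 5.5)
Your proof is correct and follows essentially the same route as the paper: Stieltjes integration by parts against $\phi_t(x)=x^ne^{-tx}$, then the substitution $u=tx$, then passing $N_\Lambda(x)/x\to L$ to the limit. The differences are cosmetic. Where the paper invokes dominated convergence, you split $N_\Lambda(x)=Lx+r(x)$ and give an explicit $\epsilon$--$X$ estimate. You also justify the first equality (termwise differentiation) and handle $n=0$ directly, whereas the paper cites the classical Karamata theorem for that case.
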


\begin{proof}
        The case of $n=0$ is the classical Karamata theorem, so assume $n \geq 1$. 
Then, using the integration by parts formula for the Stieltjes integral, the substitution formula, and the dominated convergence theorem, we get
        \begin{align*}
           \lim_{t\to 0^+} t^{n+1}\int_0^\infty x^n e^{-tx} dN_\Lambda(x) &= \lim_{t\to 0^+} -t^{n+1}\int_0^\infty N_\Lambda(x) d(x^n e^{-tx}) \\
            &= \lim_{t\to 0^+} -t^{n+1}\int_0^\infty N_\Lambda(x)(nx^{n-1}e^{-tx} - tx^ne^{-tx})dx \\
            &= \lim_{t\to 0^+} \int_0^\infty N_\Lambda(x/t)(tx^n e^{-x}  - ntx^{n-1}e^{-x})dx \\
            &= \int_0^\infty  \lim_{t\to 0^+} \frac{N_\Lambda(x/t)}{x/t}(x^{n+1}e^{-x}  - nx^{n}e^{-x})dx \\
            &= L((n+1)!-n\cdot n!) = Ln!
        \end{align*}
as claimed.
\end{proof}

To make use of this version of Karamata's theorem, we will need to rewrite it in a different form.  

\begin{corollary}\label{C:useful_karamata}
    Assume the hypotheses of Proposition \ref{P:derivative_Karamata}. Then 
    \begin{equation}\label{E:show_instead}
    \lim_{r \to 1^-} j!s!(1-r)^{j+s+1}\sum_{k=0}^\infty \binom{\lambda_k}{j}\binom{\lambda_k}{s} r^{\lambda_k} = L(j+s)!
    \end{equation}
    for every pair $j,s \in \N_0$. As a consequence, 
    \[
    \lim_{r \to 1^-} \frac{\sum_{k} \binom{\lambda_k}{j}\binom{\lambda_k}{s} r^{\lambda_k}}{\sum_{k} \binom{k}{j}\binom{k}{s} r^{k}} = L.
    \]
\end{corollary}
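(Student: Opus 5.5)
Substituting $r = e^{-t}$ turns \eqref{E:show_instead} into a statement about $t \to 0^+$ to which Proposition \ref{P:derivative_Karamata} applies. The plan has four steps.

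\emph{Step 1: reduce to monomials.} Write $\sum_k r^{\lambda_k} = \int_0^\infty e^{-tx}\,dN_\Lambda(x)$. The product $j!\,s!\binom{x}{j}\binom{x}{s}$ is a polynomial in $x$ of degree $j+s$ with leading coefficient $1$. Hence
\[
j!\,s!\sum_{k} \binom{\lambda_k}{j}\binom{\lambda_k}{s} r^{\lambda_k} = \int_0^\infty x^{j+s} e^{-tx}\, dN_\Lambda(x) + \sum_{m<j+s} a_m \int_0^\infty x^{m} e^{-tx}\, dN_\Lambda(x),
\]
with constants $a_m$ depending only on $j$ and $s$.

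\emph{Step 2: identify the limit.} Since $1-r = 1-e^{-t} = t(1+O(t))$, we have $(1-r)^{j+s+1} = t^{j+s+1}(1+o(1))$. Proposition \ref{P:derivative_Karamata} with $n = j+s$ gives $t^{j+s+1}\int_0^\infty x^{j+s}e^{-tx}\,dN_\Lambda \to L\,(j+s)!$.

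\emph{Step 3: lower-order terms vanish.} For $m<j+s$, Proposition \ref{P:derivative_Karamata} with $n=m$ shows $t^{m+1}\int_0^\infty x^m e^{-tx}\,dN_\Lambda$ is bounded as $t\to 0^+$. Multiplying by the extra factor $t^{j+s-m}\to 0$, each lower-order term tends to $0$. This also covers $L=0$, since the proposition allows $L\in[0,\infty)$. Together with Step 2, this proves \eqref{E:show_instead}.

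\emph{Step 4: the consequence.} Apply \eqref{E:show_instead} to the sequence $\Lambda_0=\{k\}_{k\in\N_0}$, whose density is $1$. This gives $j!\,s!(1-r)^{j+s+1}\sum_k \binom{k}{j}\binom{k}{s}r^k \to (j+s)!$, which is nonzero. Dividing the two limits yields the ratio statement.

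The only point needing care is the dependence of $r$ on $t$: for a fixed exponent, replacing $(1-r)^{j+s+1}$ by $t^{j+s+1}$ changes the expression only by a factor tending to $1$. No step is a real obstacle.
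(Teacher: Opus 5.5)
Your proposal is correct and takes essentially the same route as the paper. Both substitute $t=-\log r$, expand $j!\,s!\binom{\lambda_k}{j}\binom{\lambda_k}{s}$ as a monic polynomial of degree $j+s$ in $\lambda_k$, apply Proposition \ref{P:derivative_Karamata} to the leading term, and kill each lower-order term with the extra factor $t^{j+s-m}$. Your version is slightly more careful in two places: you use $1-r=t(1+o(1))$ rather than just $1-r\asymp t$, and you spell out the ratio statement by applying \eqref{E:show_instead} to $\Lambda_0=\N_0$ with density $1$, which the paper leaves implicit.
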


\begin{proof}
    
    We make the change of variable $t  = -\log(r)$ and note that $1-r \asymp -\log(r)$ as $r \to 1^-$. Then 
    \[
    \lim_{r \to 1^-} j!s!(1-r)^{j+s+1}\sum_{k=0}^\infty \binom{\lambda_k}{j}\binom{\lambda_k}{s} r^{\lambda_k} = \lim_{t\to 0^+} j!s!t^{j+s+1} \sum_{k=0}^\infty \binom{\lambda_k}{j}\binom{\lambda_k}{s} e^{-t\lambda_k}. 
    \]
    The coefficients can be written as
    \begin{align*}
    j!s!\binom{\lambda_k}{j}\binom{\lambda_k}{s}&= \big(\lambda_k(\lambda_k-1)\cdots (\lambda_k - j + 1)\big)\big(\lambda_k(\lambda_k-1)\cdots (\lambda_k - s + 1)\big) \\
    &= \lambda_k^n + \sum_{\ell = 0}^{n-1} a_\ell \lambda_{k}^{\ell}
    \end{align*}
    where $n = j+s$ and $a_\ell$ are some coefficients.  

    Thus, 
\begin{align*}
   \lim_{t\to 0^+} j!s!t^{j+s+1} \sum_{k=0}^\infty \binom{\lambda_k}{j}\binom{\lambda_k}{s} e^{-t\lambda_k} &= \lim_{t \to 0^+} t^{n+1}\sum_{k=0}^\infty \lambda_k^n e^{-t\lambda_k} +  \lim_{t\to 0^+}\sum_{\ell=0}^{n-1}  a_\ell t^{n+1} \sum_{k=0}^\infty \lambda_k^{\ell} e^{-t\lambda_k} \\
    &= \lim_{t \to 0^+} t^{n+1}\sum_{k=0}^\infty \lambda_k^n e^{-t\lambda_k} +  \lim_{t\to 0^+}\sum_{\ell=0}^{n-1}  a_\ell t^{n-\ell}\bigg( t^{\ell+1} \sum_{k=0}^\infty \lambda_k^{\ell} e^{-t\lambda_k}\bigg).
\end{align*}
The first term is exactly 
\[
\lim_{t \to 0^+} t^{n+1}\sum_{k=0}^\infty \lambda_k^n e^{-t\lambda_k} = \lim_{t\to 0^+} t^{n+1}\int_0^\infty x^n e^{-tx} dN_\Lambda(x) = Ln!.
\]
The second term satisfies 
\[
\lim_{t\to 0^+}\sum_{\ell=0}^{n-1}  a_\ell t^{n-\ell}\bigg( t^{\ell+1} \sum_{k=0}^\infty \lambda_k^{\ell} e^{-t\lambda_k}\bigg) = \lim_{t\to 0^+}\sum_{\ell=0}^{n-1}  a_\ell t^{n-\ell} \bigg(t^{\ell+1}\int_0^\infty x^\ell e^{-tx} dN_\Lambda(x)\bigg) = 0,
\]
which shows \eqref{E:show_instead}. 
\end{proof}

\section{Characterization and Redundancy of BDC Frames}\label{S: BDCF}
Given a bounded sequence of natural numbers $\{n_i\}_{i\in \N_0} \subset \N$, we have the corresponding decomposition $\ell^2(\N_0) = \bigoplus\limits_{i \in \N_0} V_i$ where $V_i = \text{span} \{\delta_k\}_{k = \sum_{j < i} n_j}^{n_i -1 +\sum_{j < i} n_j} $ for $\delta_j$ the $j^{th}$ standard basis vector in $\ell^2(\N_0)$. Let $\{z_i\}_{i \in \N_0} \subset \D$ be a sequence of complex numbers, and let $J_{n_i}(z_i)$ be the $n_i\times n_i$ matrix given by \eqref{E:jordan_block}. To simplify notation, write $\delta_{i,j} = \delta_{j + \sum_{\ell < i} n_\ell}$ so that $V_i = \text{span}\{\delta_{i,j}\}_{j = 0}^{n_i-1}$.

The first goal of this section is to prove Theorem \ref{T:characterization}. Given an operator $J = \bigoplus\limits_{i\in \N_0} J_{n_i}(z_i)$ as in the theorem, we will call the vector 
\begin{equation}\label{E:canonical_vect}
f = \sum_{i \in \N_0}\sqrt{1-|z_i|^2} \delta_{i,0}
\end{equation}
the \textit{canonical generating vector} associated to $J$. 

The next main result of this section is the promised block diagonal generalization of Theorem \ref{T:intro_thm}(\emph{i}).
\begin{theorem}\label{T:redundancy}
    Let $\{J^k g\}_{k \in \N_0}$ be a block diagonal Carleson frame with $\sigma_p(J) = \{z_i\}_{i \in \N_0} \subset \D$ a Carleson sequence. Assume $\{|z_i|\}_{i \in \N_0}$ is also a Carleson sequence, and let $r > 0$. If $z_i \neq 0$ for all $i\in \N_0$, then $\{J^{rk}g\}_{k \in \N_0}$ is a frame. Otherwise, if $z_0 = 0$ and $K = \floor{\frac{n_0-1}{r}} + 1$, then $\{g,Jg,...,J^{n_0-1}g\} \cup \{J^{rk}g\}_{k = K}^\infty$ is a frame. 
\end{theorem}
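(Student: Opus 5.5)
The plan is to reduce everything to a model space computation via the Hardy space $H^2$, and then transfer from $J$ to $J^r$. First I record the structural fact behind Theorem \ref{T:characterization}: with the canonical vector $f$ from \eqref{E:canonical_vect}, the analysis operator $x\mapsto \{\ang{x,J^kf}\}_k$ identifies $\ell^2$ with a sampling problem in which $\ang{x, J^k f}$ is a sum over $i$ of terms combining $\overline{z_i}^{\,k}, k\overline{z_i}^{\,k-1}(1-|z_i|^2),\dots$. In other words, $\{J^kf\}$ is a frame exactly when the normalized reproducing kernels and their derivatives, $(1-|z_i|^2)^{j+1/2}\partial^j_{\bar w}k_{z_i}$ for $0\le j<n_i$, form a Riesz sequence in $H^2$. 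The choice of $1-|z_i|^2$ on the subdiagonal of \eqref{E:jordan_block} is designed precisely to produce this normalization. This Riesz property is the Carleson/Hermite interpolation statement, valid since $\sup n_i<\infty$. By Theorem \ref{T:characterization} and Proposition \ref{P:commutant}, $g=Xf$ for a bounded invertible $X$ commuting with $J$, hence with $J^r$. So it suffices to treat $g=f$, since $\{J^{rk}g\}=X\{J^{rk}f\}$.

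Next, assume $z_i\ne0$ for all $i$. By \eqref{E:frac_powers}, $J^r_{n_i}(z_i)=z_i^r I+\binom{r}{1}z_i^{r-1}(1-|z_i|^2)J_{n_i}(0)+\cdots$. The first subdiagonal coefficient is $r z_i^{r-1}(1-|z_i|^2)$. I will compare it with $1-|z_i^r|^2=1-|z_i|^{2r}$, which is comparable to $1-|z_i|^2$ with constants depending only on $r$ and $\delta=\inf|z_i|>0$. Hence $J^r$ is similar, via a block diagonal lower-triangular Toeplitz operator $Y=\bigoplus Y_i$, to $\widetilde J=\bigoplus J_{n_i}(w_i)$ with $w_i=z_i^r$. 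Concretely, each $J^r_{n_i}(z_i)$ is a single Jordan block with eigenvalue $w_i$, so there is a Jordan chain normalized to the form \eqref{E:jordan_block}. The change of basis matrix has entries that are polynomials in the ratios $\binom{r}{j}z_i^{r-j}(1-|z_i|^2)^j/(1-|w_i|^2)^j$. These are uniformly bounded above, and the diagonal ratio $rz_i^{r-1}(1-|z_i|^2)/(1-|w_i|^2)$ is uniformly bounded below, so $Y$ and $Y^{-1}$ are bounded by the estimates in Lemma \ref{L:norm-est}. Then $\{J^{rk}f\}=Y\{\widetilde J^kY^{-1}f\}$. The vector $Y^{-1}f$ has components $\sqrt{1-|z_i|^2}\,(\text{bounded})$ with leading coefficients bounded below, and $\sqrt{1-|z_i|^2}\asymp\sqrt{1-|w_i|^2}$. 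So by Theorem \ref{T:characterization}, $\{\widetilde J^kY^{-1}f\}$ is a frame provided $\{w_i\}=\{z_i^r\}$ is Carleson.

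The main obstacle is therefore showing that $\{z_i^r\}$ is Carleson, which needs distinct $w_i$ and uniform separation. This is where the hypothesis that $\{|z_i|\}$ is Carleson enters, exactly as in \cite{CHPS24}. The moduli $|w_i|=|z_i|^r$ form the image of a Carleson sequence in $(0,1)$ under $t\mapsto t^r$. I will check that this map preserves pseudo-hyperbolic separation and the Carleson measure condition on $[\delta,1)$, using $1-t^r\asymp1-t$ and a comparison of $|t^r-s^r|$ with $|t-s|$. A sequence whose moduli are Carleson is itself Carleson, because $\rho(w_i,w_k)\ge\rho(|w_i|,|w_k|)$ and the Carleson measure condition is controlled by the radial projection. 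If needed I will cite the result from \cite{CHPS24} directly for this step. For $r\notin\N$ one must also note that the principal branch is consistent; this holds because only moduli are used.

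Finally, consider the case $z_0=0$. Split $\ell^2=V_0\oplus V_0^\perp$. On $V_0^\perp$ the previous argument applies to $\{J^{rk}g\}_{k\ge K}$; dropping finitely many terms is harmless since $J^{rK}$ restricted to $V_0^\perp$ is invertible and commutes. On $V_0$, $J^{m}$ vanishes for $m\ge n_0$, and $rk\ge rK>n_0-1$ makes $J^{rk}_{n_0}(0)=0$ under \eqref{E:frac_powers}. The vectors $g,\dots,J^{n_0-1}g$ project onto a basis of $V_0$ because $c_{0,0}\ne0$. Their $V_0^\perp$ components are finitely many vectors, so a standard finite perturbation argument gives a frame for the whole space: completeness follows from the block-triangular structure, and the lower bound holds because adding finitely many vectors to a frame of a complementary subspace preserves the frame property once the spans are complementary.
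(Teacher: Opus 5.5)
Your proposal is correct and follows essentially the same route as the paper. It reduces to the canonical vector $f$, conjugates each block $J^r_{n_i}(z_i)$ to $J_{n_i}(z_i^r)$ by a Jordan-chain change of basis fixing $\delta_{i,0}$ with uniformly bounded inverse, uses \cite{CHPS24} together with $1-|z_i|^2\asymp 1-|z_i|^{2r}$, and splits off $V_0$ when $z_0=0$.

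One correction: the change-of-basis blocks $Y_i$ are lower triangular with diagonal $1,c_i,c_i^2,\dots$, where $c_i=rz_i^{r-1}(1-|z_i|^2)/(1-|z_i|^{2r})$, but they are not Toeplitz. A Toeplitz block would commute with $J^r_{n_i}(z_i)$ and so could not conjugate it to $J_{n_i}(z_i^r)$. Your bound on $Y_i^{-1}$ therefore has to come from the adjugate/Hadamard argument in the proof of Lemma \ref{L:norm-est}, applied to triangular matrices whose diagonal entries are bounded away from zero. That argument works and is a valid direct substitute for the paper's limiting argument $U_iL_i\to I$ via Lemma \ref{L:boundedness}.
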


\subsection{Characterization}\label{s: vectors} 
To prove Theorem \ref{T:characterization}, we first show that $\{J^k f\}_{k \in \N_0}$ is a frame when $f$ is the canonical generating vector \eqref{E:canonical_vect}. We will need the following result of Nikolskii and Vasyunin. 

\begin{theorem}[\protect{\cite[Lecture~IX]{NN2012}}]
    Let $B(z)$ be a Blaschke product with zeros $\{z_i\}_{i\in \N_0}$ each with multiplicity $n_i$. Assume that $\sup\limits_i n_i < \infty$. Then operator $\Phi : H^2(\D) \to \ell^2$ given by 
    \begin{equation}\label{E:vas_operator}
    \Phi g = \bigg( (1-|z_i|^2)^{s+ \frac{1}{2}} g^{(s)}(z_i) \mid i \in \N_0, 0 \leq s \leq n_i-1  \bigg) 
    \end{equation}
    is bounded and surjective if and only if $\{z_i\}_{i \in \N_0}$ is a Carleson sequence.
\end{theorem}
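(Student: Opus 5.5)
The plan is to transfer everything to the model space $K_B = H^2(\D)\ominus BH^2(\D)$. The operator $\Phi$ in \eqref{E:vas_operator} vanishes exactly on $BH^2$, because $g^{(s)}(z_i)=0$ for $0\le s\le n_i-1$ and all $i$ precisely when $B$ divides $g$. Hence bounded surjectivity of $\Phi$ is equivalent to $\Phi|_{K_B}$ being an isomorphism onto $\ell^2$. The adjoint picture is cleaner. Write $k_{z}^{(s)}$ for the reproducing kernel of $g\mapsto g^{(s)}(z)$, which satisfies $\|k_z^{(s)}\|\asymp (1-|z|^2)^{-s-1/2}$. Then $\Phi|_{K_B}$ is an isomorphism iff the normalized family $\{(1-|z_i|^2)^{s+1/2}k_{z_i}^{(s)}\}$ is a Riesz basis of $K_B$. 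The proof then splits into necessity and sufficiency, with the bounded multiplicity $n_*=\sup_i n_i$ keeping all constants uniform.

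\emph{Necessity.} Suppose $\Phi$ is bounded and surjective. By the open mapping theorem, for each $i$ there is $g_i\in H^2$ with $\|g_i\|\le C$ such that $\Phi g_i$ is the unit vector at the slot $(i,0)$. Then $g_i$ vanishes to order $n_j$ at every $z_j$ with $j\neq i$, so $g_i = B_i h_i$, where $B_i$ is the Blaschke product with the $i$-th zero removed entirely and $\|h_i\|\le C$. Evaluating at $z_i$ gives
\[
(1-|z_i|^2)^{-1/2} = |g_i(z_i)| \le |B_i(z_i)|\,\|h_i\|\,(1-|z_i|^2)^{-1/2},
\]
so $|B_i(z_i)|\ge 1/C$. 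Since $|B_i(z_i)|=\prod_{j\ne i}|b_{z_j}(z_i)|^{n_j}\le\prod_{j\neq i}|b_{z_j}(z_i)|$, where $b_{z_j}$ is the Blaschke factor at $z_j$, the points $\{z_i\}$ form a Carleson sequence.

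\emph{Sufficiency: boundedness.} A Carleson sequence makes $\mu=\sum_i(1-|z_i|^2)\delta_{z_i}$ a Carleson measure, and it is uniformly separated in the pseudo-hyperbolic metric. Cauchy's estimate on the disc $D_i$ of pseudo-hyperbolic radius $\rho/2$ about $z_i$ gives
\[
(1-|z_i|^2)^{2s+1}|g^{(s)}(z_i)|^2 \lesssim \frac{1}{1-|z_i|}\int_{D_i}|g|^2\,dA .
\]
These discs are pairwise disjoint, so summing over $i$ and $s$ and applying the Carleson embedding for the area measure $\sum_i \mathbf 1_{D_i}\,dA/(1-|z_i|)$, which is again a Carleson measure, bounds $\|\Phi g\|$ by $C\|g\|$.

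\emph{Sufficiency: surjectivity.} This is the main step. First I would build the biorthogonal system explicitly. Write $B=b_{z_i}^{n_i}B_i$. The Carleson condition gives $|B_i|\ge\delta^{n_*}$ at $z_i$, and by a Schwarz--Pick/Harnack argument also on a fixed pseudo-hyperbolic neighbourhood of $z_i$. Using this, I would invert the triangular Toeplitz system coming from the Taylor coefficients of $B_i$ at $z_i$ to get functions
\[
\varphi_{i,s}=P_{K_B}\Bigl(B_i\, p_{i,s}(b_{z_i})\,\tfrac{(1-|z_i|^2)^{1/2}}{1-\overline{z_i}z}\Bigr),
\]
with $p_{i,s}$ polynomials of degree less than $n_i$ and uniformly bounded coefficients. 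Lemma~\ref{L:norm-est} supplies the uniform bound on the inverse Toeplitz matrices. These $\varphi_{i,s}$ are biorthogonal to the normalized kernels and have uniformly bounded norms, so the kernel family is uniformly minimal.

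It remains to upgrade uniform minimality plus the Bessel property (already proved above) to a Riesz basis. I expect this to be the real obstacle. The route I would try is the Nikolskii--Pavlov criterion: the family is an unconditional basis iff the skew projections $P_\sigma$ onto $K_{B_\sigma}$ along $K_{B_{\sigma^c}}$ are uniformly bounded over all subsets $\sigma$ of the index set. These projections are controlled by the invertibility of the Toeplitz operators $T_{\overline{B_\sigma}B_{\sigma^c}}$. By Carleson's condition one has $\inf_{\D}(|B_\sigma|+|B_{\sigma^c}|)\ge c(\delta)>0$ uniformly in $\sigma$, and this continues to hold with bounded multiplicities since each factor is raised to a power at most $n_*$. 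The corona/Wolff estimate then yields bounded solutions of $B_\sigma u+B_{\sigma^c}v=1$, and hence uniformly bounded $P_\sigma$. This completes the Riesz basis property and therefore the surjectivity of $\Phi$.
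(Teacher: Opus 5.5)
The paper does not prove this statement; it quotes it from Nikolskii's treatise. So the only comparison is with the cited source, and your outline is a correct reconstruction of the standard argument there:
\begin{itemize}
\item necessity via the open mapping theorem and $|B_i(z_i)|\ge 1/C$;
\item boundedness via the Carleson embedding;
\item surjectivity via the Nikolskii--Pavlov projection criterion together with the two-function corona theorem.
\end{itemize}
The corona step is sound: if $B_\sigma u_\sigma+B_{\sigma^c}v_\sigma=1$, then the adjoint of $P_\sigma$ on $K_B$ is $f\mapsto B_{\sigma^c}P_{K_{B_\sigma}}(v_\sigma f)$. Hence $\|P_\sigma\|\le\|v_\sigma\|_\infty$.

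One point in the last step needs to be made explicit. The criterion phrased with $K_{B_\sigma}$ only applies when $\sigma$ is a set of \emph{block} indices $i$. If $\sigma$ splits a block, for example keeping $k^{(1)}_{z_i}$ but not $k_{z_i}$, the span is not a model space. So what the criterion gives you is an unconditional decomposition $K_B=\dotplus_i K_{b_{z_i}^{n_i}}$. In a Hilbert space this means $\|\sum_i x_i\|^2\asymp\sum_i\|x_i\|^2$ for $x_i\in K_{b_{z_i}^{n_i}}$, by averaging $\|\sum_i\pm x_i\|^2$ over signs.

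To get a Riesz basis of the individual normalized kernels, you also need Riesz bounds inside each block that are uniform in $i$. Your biorthogonal system provides exactly this:
\begin{itemize}
\item The orthogonal projections of $\varphi_{i,0},\dots,\varphi_{i,n_i-1}$ onto $K_{b_{z_i}^{n_i}}$ form the dual basis there.
\item So the inverse Gram matrix of block $i$ has norm at most its trace, which is at most $\sum_s\|\varphi_{i,s}\|^2$.
\item This quantity is bounded uniformly in $i$.
\end{itemize}
With this added, the argument is complete. Note that uniform minimality is then not just a preliminary step: it is the ingredient that handles the multiplicities. For simple zeros the in-block bounds would be automatic, since each block is one-dimensional and spanned by a unit vector.
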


We will frequently use the natural isomorphism $\ell^2(\N_0) \cong H^2(\D)$ given by 
\[
c = (c_k)_{k \in \N_0} \mapsto \vartheta_c(z) = \sum_{k \in \N_0} c_kz^k.
\]
To show a sequence $\{f_k\}_{k \in \N_0}$ in $\ell^2(\N_0)$ is a frame, it suffices to show the boundedness and surjectivity of its $H^2(\D)$ synthesis operator $\vartheta_c(z) \mapsto \sum_{k} c_k f_k$.

\begin{proposition}\label{T:CV frames}
    Let $\{z_i\}_{i\in \N_0}\subset \D$ be an injective sequence and $\{n_i\}_{i \in \N_0}$ a bounded sequence with $n_i \in \N$. Set $J = \bigoplus\limits_{i\in \N_0} J_{n_i}(z_i)$ and 
    \[
    f = \sum_{i = 0}^\infty \sqrt{1-|z_i|^2} \delta_{i,0}.
    \]
    Then $\{J^k f\}_{k \in \N_0}$ forms a frame if and only if $\{z_i\}_{i \in \N_0}$ is a Carleson sequence. 
\end{proposition}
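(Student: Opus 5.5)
The plan is to identify the $H^2(\D)$ synthesis operator of $\{J^k f\}_{k\in\N_0}$ with the Nikolskii--Vasyunin operator $\Phi$ of \eqref{E:vas_operator}, up to a harmless bounded invertible diagonal factor. The statement then follows from the quoted theorem.

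First I will compute $J^k f$ blockwise. On $V_i$ write $J_{n_i}(z_i)=z_iI+(1-|z_i|^2)N_i$, where $N_i=J_{n_i}(0)$ is nilpotent and $N_i^j\delta_{i,0}=\delta_{i,j}$ for $j<n_i$. By the binomial expansion \eqref{E:frac_powers} with $\lambda=k\in\N_0$,
\[
J_{n_i}^k(z_i)\,\delta_{i,0}=\sum_{j=0}^{n_i-1}\binom{k}{j}z_i^{k-j}(1-|z_i|^2)^j\,\delta_{i,j},
\]
which also holds when $z_i=0$, since $\binom{k}{j}=0$ for $k<j$. For a finitely supported $c=(c_k)$ and $\vartheta_c(z)=\sum_k c_kz^k$, the $\delta_{i,j}$-coordinate of $\sum_k c_kJ^kf$ is therefore
\[
\sqrt{1-|z_i|^2}\,(1-|z_i|^2)^j\sum_k c_k\binom{k}{j}z_i^{k-j}=\frac{1}{j!}(1-|z_i|^2)^{j+\frac12}\,\vartheta_c^{(j)}(z_i).
\]
Hence the synthesis operator equals $\Delta\Phi$ on polynomials, where $\Delta$ is the diagonal operator with entry $1/j!$ at $\delta_{i,j}$. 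Since $\sup_i n_i<\infty$, $\Delta$ is bounded and invertible with $\|\Delta^{-1}\|\le(\sup_i n_i)!$. So $\{J^kf\}$ is a frame if and only if $\Phi$ (defined on polynomials) extends to a bounded surjection $H^2(\D)\to\ell^2$.

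If $\{z_i\}$ is Carleson, it is in particular a Blaschke sequence, and injectivity of the sequence makes the multiplicities exactly $n_i$. The Blaschke product with these zeros exists, so the theorem of Nikolskii and Vasyunin gives that $\Phi$ is bounded and surjective, and hence that $\{J^kf\}$ is a frame.

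For the converse, I would apply the Nikolskii--Vasyunin theorem in the other direction. Its hypothesis requires that the Blaschke product exist, which is the step needing a small argument. Suppose $\{J^kf\}$ is a frame. Then $\Phi$ is bounded, and keeping only the $s=0$ coordinates gives
\[
\sum_i(1-|z_i|^2)|g(z_i)|^2\le C\|g\|_{H^2}^2 .
\]
Taking $g\equiv1$ yields $\sum_i(1-|z_i|^2)<\infty$, the Blaschke condition. So $B$ exists, $\Phi$ is bounded and surjective, and the theorem forces $\{z_i\}$ to be Carleson.

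I expect the only delicate points to be this bookkeeping: the existence of $B$ in the converse, and checking that the formula for $\Phi$ on polynomials agrees with the extension, which holds by density and boundedness. The rest is the binomial computation above.
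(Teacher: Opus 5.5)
Your proposal is correct and follows the paper's proof. Both compute $J_{n_i}^k(z_i)\delta_{i,0}$ by the binomial formula and identify the synthesis operator of $\{J^kf\}$ on polynomials with the Nikolskii--Vasyunin operator, up to the bounded invertible diagonal factor $1/j!$ (the paper's $\Psi$). Your extra check that the Blaschke condition holds in the converse direction is a detail the paper leaves implicit; it is also immediate from $f\in\ell^2$, since $\|f\|^2=\sum_i(1-|z_i|^2)$.
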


\begin{proof}
     Since $\sup\limits_{i\in \N_0} n_i < \infty$, the boundedness and surjectivity of the operator \eqref{E:vas_operator} is equivalent to the boundedness and surjectivity of the operator $\Psi$ given by
    \[
    \Psi \vartheta_c = \sum_{i=0}^\infty
\sum_{j=0}^{n_i-1}
\frac{(1-|z_i|^2)^{j+\frac{1}{2}}}{j!}
\vartheta_c^{(j)}(z_i)
\delta_{i,j}.
    \]
    We show that this is the synthesis operator of $\{J^k f\}_{k \in \N_0}$. 

    Let $c \in \ell^2(\N_0)$ be a finitely supported sequence so that $\vartheta_c(z) \in H^2(\D)$ is a polynomial. Let $\widetilde\Psi$ be the $H^2(\D)$ synthesis operator for $\{J^k f\}_{k \in \N_0},$ so that
    \[
    \widetilde\Psi\vartheta_c = \sum_{k = 0}^\infty c_kJ^{k}f.
    \]
    Decomposing $f$, we get 
    \begin{align*}
        \widetilde\Psi \vartheta_c &= \sum_{k = 0}^\infty c_kJ^{k}f \\
        &= \sum_{k = 0}^\infty c_k J^k\sum_{i = 0}^\infty \sqrt{1-|z_i|^2} \delta_{i,0} \\
        &= \sum_{k=0}^\infty \sum_{i=0}^\infty c_k \sqrt{1-|z_i|^2}J^k_i \delta_{i,0}.
    \end{align*}
    Now, a direct calculation shows that 
\begin{equation}\label{E:frame_vectors}
J^k_i\delta_{i,0}
=
\sum_{j = 0}^{n_i-1}
\binom{k}{j}
(1-|z_i|^2)^j
z_i^{k-j}
\delta_{i,j}.
\end{equation}
We are allowed to interchange the order of summation since the sum over $k$ is finite. Thus 
\begin{align*}
\widetilde\Psi \vartheta_c
&=
\sum_{k=0}^\infty
\sum_{i=0}^\infty
c_k \sqrt{1-|z_i|^2}
\sum_{j = 0}^{n_i-1}
\binom{k}{j}
(1-|z_i|^2)^j
z_i^{k-j}
\delta_{i,j} \\
&=
\sum_{i=0}^\infty
\sum_{j = 0}^{n_i-1}
\sum_{k=j}^\infty
c_k
\binom{k}{j}
(1-|z_i|^2)^{j+\frac{1}{2}}
z_i^{k-j}
\delta_{i,j} \\
&=
\sum_{i=0}^\infty
\sum_{j = 0}^{n_i-1}
\frac{1}{j!}
\sum_{k=j}^\infty
c_k
\frac{k!}{(k-j)!}
(1-|z_i|^2)^{j+\frac{1}{2}}
z_i^{k-j}
\delta_{i,j} \\
&=
\sum_{i=0}^\infty
\sum_{j=0}^{n_i-1}
\frac{(1-|z_i|^2)^{j+\frac{1}{2}}}{j!}
\vartheta_c^{(j)}(z_i)
\delta_{i,j} \\
&=
\Psi \vartheta_c ,
\end{align*}
    and the proposition is proved. 
\end{proof}

To complete the proof of Theorem \ref{T:characterization}, we recall the general characterization of generating vectors for singly generated dynamical frames. 

\begin{lemma}[\cite{CHP20}]\label{L:similar}
    Let $T \in B(\ell^2)$ be a bounded operator, and suppose $f \in \ell^2$ such that $\{T^kf\}_{k \in \N_0}$ is a frame. Then $\{T^k g\}_{k \in \N_0}$ for $g \in \ell^2$ is also a frame if and only if there exists an invertible bounded operator $X \in B(\ell^2)$ such that $XTX^{-1} = T$ and $Xf = g$.  
\end{lemma}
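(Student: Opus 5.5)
The sufficiency direction is immediate, so most of the work will go into necessity. Suppose $X$ is bounded, invertible, $XT=TX$ and $Xf=g$. Then $T^kg=T^kXf=XT^kf$ for every $k$, so $\{T^kg\}$ is the image of the frame $\{T^kf\}$ under an invertible operator. It is therefore a frame: the analysis operator changes by $X^*$, and the frame bounds change by factors $\|X^{-1}\|^{-2}$ and $\|X\|^2$.

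For necessity, let $U,V:H^2(\D)\to\ell^2$ be the synthesis operators $U\vartheta_c=\sum_k c_kT^kf$ and $V\vartheta_c=\sum_kc_kT^kg$. Both are bounded and surjective by assumption. Let $S$ denote multiplication by $z$ on $H^2(\D)$. Then $US=TU$ and $VS=TV$, and $U1=f$, $V1=g$. The plan is to construct $X:=VU^{\dagger}$, where $U^\dagger$ is the bounded right inverse of $U$ with range $(\ker U)^\perp$. This is a well-defined bounded operator with $XU=V$ once we know that $\ker U\subseteq\ker V$. Granting that inclusion, the intertwinings give $XTU=XUS=VS=TV=TXU$, and surjectivity of $U$ yields $XT=TX$. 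Also $Xf=XU1=V1=g$. Exchanging the roles of $f$ and $g$ produces $Y$ with $YV=U$, provided $\ker V\subseteq\ker U$. Then $YXU=U$ and $XYV=V$, so $YX=XY=I$ and $X$ is invertible. So everything reduces to proving $\ker U=\ker V$.

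This kernel inclusion is the main obstacle, and I will attack it by duality. Since $g\in\ell^2=\operatorname{ran}U$, pick $h\in H^2$ with $Uh=g$. For $y\in\ell^2$, put $a=U^*y$; its Taylor coefficients are $\overline{\ang{T^mf,y}}$ up to conjugation conventions. Expanding $T^kg=\sum_j h_jT^{j+k}f$ gives that the $k$-th coefficient of $V^*y$ is $\sum_j \overline{h_j}\,a_{j+k}$. In other words, $V^*y=T_{\bar h}a$, the co-analytic Toeplitz operator applied to $a$. A priori this holds only as a formal identity. It is justified because $V^*$ is bounded and the coefficientwise sums converge: each one is $\ang{T^kg,y}$.

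Now $\ker U$ is a closed $S$-invariant subspace, since $c\in\ker U$ implies $USc=TUc=0$. By Beurling, $\ker U=\theta H^2$ for some inner $\theta$, or $\ker U=\{0\}$, in which case there is nothing to prove. Hence $\operatorname{ran}U^*=(\theta H^2)^\perp=K_\theta$ is $S^*$-invariant. The step that needs care is showing that $T_{\bar h}a\in K_\theta$ whenever $a\in K_\theta$, even though $h$ is only in $H^2$. The idea is to write $T_{\bar h}a$ as a norm limit of $\sum_{j\le N}\overline{h_j}S^{*j}a$; each partial sum lies in $K_\theta$, which is closed, so the limit does too. To make this rigorous I would pair against a dense set of test vectors and use boundedness of $V^*$. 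Once this is established, $\operatorname{ran}V^*\subseteq\operatorname{ran}U^*$, which is equivalent to $\ker U\subseteq\ker V$. By symmetry, applying the same argument with $f$ and $g$ interchanged gives $\ker V\subseteq\ker U$, and the proof is complete.
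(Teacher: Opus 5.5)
Your reduction is correct. Sufficiency, the construction $X=VU^{\dagger}$ with $XT=TX$ and $Xf=g$, invertibility via the symmetric operator $Y$, and the Beurling description $\ker U=\theta H^2$ are all fine, so everything rests on $\ker U\subseteq\ker V$. The paper gives no argument for this lemma and only cites \cite{CHP20}. The gap is in that step.

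Your partial sums $\sum_{j\le N}\overline{h_j}S^{*j}a$ equal $T_{\overline{h_N}}a$, where $h_N=\sum_{j\le N}h_jz^j$. They converge to $V^*y$ coefficientwise, but in general not in norm, even inside the setting of the lemma. For a counterexample:
\begin{itemize}
\item Take a Carleson frame $\{D^kf\}$ with eigenvalues $z_i\uparrow 1$, so that $\ker U=BH^2$.
\item Let $g=Uh$, where $h$ is in the disc algebra, $|h|\ge 1$ on $\D$, and $\sup_N|h_N(1)|=\infty$. Such $h$ exist by uniform boundedness, because the Landau constants $\sup\{|h_N(1)|:\|h\|_\infty\le 1\}$ grow like $\log N$. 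Then $\{D^kg\}$ is a frame.
\item For the reproducing kernels $k_{z_i}\in K_B$ you have $T_{\overline{h_N}}k_{z_i}=\overline{h_N(z_i)}\,k_{z_i}$. Hence $\|T_{\overline{h_N}}|_{K_B}\|\ge\sup_i|h_N(z_i)|\ge|h_N(1)|$.
\item Uniform boundedness then gives some $a\in K_B=\operatorname{ran}U^*$ with $\sup_N\|T_{\overline{h_N}}a\|=\infty$.
\end{itemize}

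Your fallback of pairing with test vectors does not close the gap by itself either. Against $\theta q$ you must do one of two things. You can interchange $\sum_k\sum_j\overline{h_j}a_{j+k}\overline{(\theta q)_k}$, but this need not converge absolutely when all three coefficient sequences are merely in $\ell^2$. Or you can write $\ang{V^*y,\theta q}=\ang{y,V(\theta q)}$, which is just the original inclusion again.

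The missing ingredient is a boundedly convergent approximation on the primal side.
\begin{itemize}
\item From $Vz^k=T^kg=T^kUh=U(z^kh)$ you get $Vp=U(hp)$ for every polynomial $p$.
\item For $\phi\in H^\infty$, the Fej\'er means $\sigma_n\phi$ are polynomials with $\|\sigma_n\phi\|_\infty\le\|\phi\|_\infty$ and $\sigma_n\phi\to\phi$ in $H^2$. After passing to a subsequence, they also converge a.e.\ on $\mathbb{T}$.
\item Dominated convergence then gives $h\,\sigma_n\phi\to h\phi$ in $L^2(\mathbb{T})$. By boundedness of $U$ and $V$, $V\phi=U(h\phi)$.
\item Taking $\phi=\theta q$ with $q$ a polynomial gives $V(\theta q)=U(\theta\cdot hq)=0$, since $hq\in H^2$.
\item Since $\{\theta q\}$ is dense in $\theta H^2$, this yields $\ker U\subseteq\ker V$.
\end{itemize}
In your dual language this reads $\ang{V^*y,\theta q}=\ang{U^*y,\theta hq}=0$. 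With this step, and its mirror image for $\ker V\subseteq\ker U$, your proof is complete.
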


\begin{proof}[Proof of Theorem \ref{T:characterization}]
Let $f$ be the canonical generating vector \eqref{E:canonical_vect} so that $\{J^k f\}_{k\in \N_0}$ is a frame. By the lemma, it suffices to characterize all vectors $g = Xf$ where $X$ is an invertible operator with $XJ = JX$. By Proposition \ref{P:commutant}, $X$ is block-diagonal $X = \bigoplus\limits_{i\in \N_0} X_i$ where each $X_i$ is of the form \eqref{E:inf-toeplitz}. It is now easy to check that $Xf$ has the form 
\[
Xf = \sum_{i \in \N_0} \sum_{j = 0}^{n_i-1} c_{i,j}\sqrt{1-|z_i|^2}\delta_{i,j}
\]
where $\sup_{i,j}|c_{i,j}| < \infty$ and $\inf\limits_i |c_{i,0}| > 0$ .
\end{proof}

\subsection{Redundancy of BDC Frames}\label{s:redundancy}

We now turn our attention to Theorem \ref{T:redundancy}. It suffices to prove the theorem for the case $g = f$, the canonical vector, by Lemma \ref{L:similar}. The key is the following technical proposition, the proof of which we move to the Appendix \ref{AP:A} .  

\begin{proposition}\label{P:Bound_COB}
     Let $\{(z_i,n_i)\}  \subset \D\setminus \{0\} \times \N $ be a sequence such that $\{z_i\}$ is a Carleson sequence and $\sup\limits_i{n_i} < \infty$. Then for $r > 0$, there exist $n_i \times n_i$ matrices $L_i$ with $ J_{n_i}(z_i)^r  = L_iJ_{n_i}(z_i^r) L_i^{-1}$ satisfying 
     \[
     \sup\limits_{i}{\{\|L_i\|, \|L_i^{-1}\|\}} < \infty
     \]
     and $L_i\delta_{i,0} = \delta_{i,0}$. Hence, the operator $\bigoplus\limits_{i\in \N_0} L_i \in B(\ell^2)$ is bounded and continuously invertible.
 \end{proposition}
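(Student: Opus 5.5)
We will construct $L_i$ explicitly as a change of basis between two Jordan chains. Fix $i$, write $z=z_i$, $n=n_i$, $w=z^r$ (principal branch), $\varepsilon=1-|z|^2$, $\eta=1-|w|^2$, and $N=J_n(0)$. By \eqref{E:frac_powers},
\[
J_n(z)^r = w\Big(I + \sum_{j=1}^{n-1}\tbinom{r}{j} z^{-j}\varepsilon^j N^j\Big) = wI + A,
\]
where $A=\sum_{j\ge1}\binom{r}{j}z^{r-j}\varepsilon^jN^j$ is nilpotent, strictly lower triangular, and has first subdiagonal entries $a:=rz^{r-1}\varepsilon$. Meanwhile $J_n(w)=wI+\eta N$. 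For $a\neq 0$, $J_n(z)^r$ is a single Jordan block with eigenvalue $w$, so it is similar to $J_n(w)$. We choose $L$ by building a Jordan chain starting at $\delta_0$. Set
\[
v_0=\delta_0,\qquad v_{s}=\eta^{-s}A^{s}\delta_0 \quad (0\le s\le n-1),
\]
and let $L$ be the matrix with columns $v_s$. Then
\[
J_n(z)^r v_s = wv_s+\eta v_{s+1}
\]
and $A v_{n-1}=0$, which gives $J_n(z)^rL=LJ_n(w)$ and $L\delta_0=\delta_0$, as required. Since $A$ is strictly lower triangular, $L$ is lower triangular with diagonal entries $(a/\eta)^s$. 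In fact $L$ is a polynomial in $A$ applied columnwise: its $(p,s)$ entry is $\eta^{-s}(A^s)_{p0}$, and $(A^s)_{p0}$ is a sum of products of $s$ coefficients $\binom{r}{j}z^{r-j}\varepsilon^j$ with total $\varepsilon$-power equal to $p$.

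The key step, and the main obstacle, is to show uniform boundedness of these entries and of $L^{-1}$. We need two estimates, uniform in $i$.
\begin{enumerate}
\item[(a)] $\eta\asymp\varepsilon$. Since $|w|=|z|^r$, we have $1-|z|^{2r}\asymp \max(r,1)\cdot(1-|z|^2)$ up to constants depending only on $r$. We also need $|z|\ge\delta>0$, which holds because a Carleson sequence lies outside a neighborhood of $0$ except for at most one point, and here $0$ is excluded.
\item[(b)] $|z|^{\pm(r-j)}$ is bounded for $j\le n_*$. This also uses $\inf|z_i|=\delta>0$.
\end{enumerate}
With these, each entry $(A^s)_{p0}$ is $O(\varepsilon^p)$ with $p\ge s$, so $\eta^{-s}(A^s)_{p0}=O(\varepsilon^{p-s})=O(1)$. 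Hence $\sup_i\|L_i\|<\infty$ by the crude bound $\|X\|\le n\sqrt n\max|X_{pq}|$ from Lemma~\ref{L:norm-est}.

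For the inverse, note that the diagonal entries of $L$ are $(a/\eta)^s=(r z^{r-1}\varepsilon/\eta)^s$, whose moduli are bounded below uniformly by (a) and (b). We then apply the adjugate/Hadamard argument from the proof of \eqref{ineq:inverse_est}. That argument extends verbatim to lower triangular (not necessarily Toeplitz) matrices: the determinant is the product of the diagonal entries, which is bounded below, and every minor is bounded by $n^{n/2}(\max|L_{pq}|)^{n-1}$. This gives $\sup_i\|L_i^{-1}\|<\infty$.

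Finally, we use $\sup n_i<\infty$ to set $L=\bigoplus_i L_i$. This operator is bounded with bounded inverse $\bigoplus_i L_i^{-1}$ and intertwines $J^r$ with $\bigoplus_i J_{n_i}(z_i^r)$.
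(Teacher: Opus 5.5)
Your proposal is correct, and your $L_i$ is exactly the paper's. The paper also sets $v_{i,0}=\delta_{i,0}$ and $v_{i,j+1}=(1-|z_i|^{2r})^{-1}S_iv_{i,j}$, which is $v_{i,s}=\eta^{-s}A^s\delta_0$ in your notation.

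Where you differ is in how the uniform bounds are obtained.

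The paper uses that a Carleson sequence satisfies $|z_i|\to1$. Then the diagonal entries $c_i^{s}$ of $L_i$ have modulus tending to $1$, and the off-diagonal entries are $O(1-|z_i|^2)\to0$. After removing the phases $\zeta_i^{(r-1)s}$ with a diagonal unitary $U_i$, it gets $U_iL_i\to I$. Lemma~\ref{L:boundedness} (continuity of inversion) then gives the bounds. This has to be done separately on each subsequence of constant block size.

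You instead estimate the entries directly, via $\eta^{-s}(A^s)_{p0}=O(\varepsilon^{p-s})$. You control $L_i^{-1}$ through $\det L_i=\prod_s (a/\eta)^s$ and the adjugate/Hadamard bound, which indeed works for any lower triangular matrix with bounded entries and diagonal bounded away from $0$.

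What each route buys:
\begin{itemize}
\item Your route gives explicit constants depending only on $r$, $n_*$ and $\delta=\inf_i|z_i|$, and handles varying block sizes all at once.
\item It uses the Carleson hypothesis only to get $\delta>0$, so it applies to any sequence in $\D$ bounded away from $0$.
\item The paper's limit argument gives sharper qualitative information in exchange: $L_i$ is asymptotically a diagonal unitary, so $\|L_i\|,\|L_i^{-1}\|\to1$.
\end{itemize}

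A small remark on (a): the comparison $\eta\asymp\varepsilon$ holds for all $|z|<1$, with constants $\min\{1,r\}$ and $\max\{1,r\}$. The lower bound $|z|\ge\delta$ is needed only in (b) and for the lower bound on $|a/\eta|$.
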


\begin{proof}[Proof of Theorem \ref{T:redundancy}]
    Assume $\{J^k f\}_{k \in \N_0}$ is a frame where $f$ is the canonical generating vector, $\sigma_p(J) = \{z_i\}_{i \in \N_0}$ is such that $\{|z_i|\}_{i \in \N_0}$ is a Carleson sequence. We first consider the case when $z_i \neq 0$ for all $i$. 
    
    Let $L = \bigoplus\limits_{i\in \N_0} L_i$ where $L_i$ are the matrices in Proposition \ref{P:Bound_COB}. Then  $L$ is a bounded invertible operator and  $L^{-1}J^rL = \bigoplus_i J_{n_i}(z_i^r) =: J_r$. Since $L_i\delta_{i,0} = \delta_{i,0}$, we have $Lf = f$. It therefore suffices to show that $\{J_r^kf\}_{k=0}^\infty$ is a frame. 

    Now, it has been shown in \cite{CHPS24} that a Carleson sequence $\{z_i\}_{i \in \N_0}$ satisfying the hypotheses has the property that $\{z_i^r\}_{i \in \N_0}$ is also a Carleson sequence (note that in the paper cited, this was only shown for $r \in \N$, but the proof is easily generalizable). By Theorem \ref{T:characterization}, it will follow that $f$ is an admissible generating vector for $J_r$ if
    \[
    0 <\inf_i \frac{\sqrt{1-|z_i|^2}}{\sqrt{1-|z_i^r|^2}} \leq \sup_i \frac{\sqrt{1-|z_i|^2}}{\sqrt{1-|z_i^r|^2}} < \infty.
    \]
    But this is clear since 
    \[
    \min\left\{\frac{1}{r},1\right\} \leq \frac{1-x^2}{1-x^{2r}} \leq \max\left\{\frac{1}{r},1\right\}.
    \]

    Now suppose $z_0 = 0$ and set $K = \floor{\frac{n_0-1}{r}} + 1$. Then $\overline{\text{span}}\{J^{rk}f\}_{k = K}^\infty \subset \bigoplus\limits_{i\geq 1} V_i$, and $\{f,Jf,...,J^{n_0-1}f\}$ is an outer frame for $V_0$. Since $J$ is invertible on $\bigoplus\limits_{i\geq 1}V_i$, the preceding argument shows that $\{J^{rk}f\}_{k = K}^\infty$ is in fact a frame for $\bigoplus\limits_{i\geq 1}V_i$. It follows that $\{f,Jf,...,J^{n_0-1}f\} \cup \{J^{rk}f\}_{k = K}^\infty$ is a frame for $\ell^2(\N_0)$. 
\end{proof}

\section{Generators with Nonnegative Spectrum}\label{S:positive_spec}

In this section, we study BDC frames for which the generating matrix has nonnegative real spectrum. We provide a complete classification of their redundancy under the existence of a natural density. We will prove the following result, from which Theorem \ref{T:density-carleson} will follow as a special case. 

\begin{theorem}\label{T:Density-redundancy}
    Let $\{J^k g\}_{k \in \N_0}$ be a block diagonal Carleson frame with $\sigma_p(J) = \{z_j\}_{j \in \N_0} \subset [0,1)$ a Carleson sequence. Suppose $\Lambda = \{\lambda_k\}_{k = 0}^\infty \subset[0,\infty)$ is a strictly increasing sequence satisfying 
    \begin{equation}\label{E:limit_assump}
    \lim_{x \to \infty} \frac{N_\Lambda(x)}{x} = L
    \end{equation}
    where $N_\Lambda$ is the counting function associated to $\Lambda$. 
    \begin{enumerate}
        \item[(i)] If J is invertible, then $\{J^{\lambda_k} g\}_{k \in \N_0}$ is a frame if and only if $0 < L < \infty$. 
        \item[(ii)] If $0 \in \sigma_p(J)$, then $\{J^{\lambda_k} g\}_{k \in \N_0}$ is a frame if and only if $0 < L < \infty$ and $\lambda_j =j$ for all $0 \leq j \leq n_0 -1$ where $n_0$ is the algebraic multiplicity of $0$ as an eigenvalue of $J$.
    \end{enumerate} 
\end{theorem}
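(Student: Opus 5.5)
We first reduce to the canonical generating vector $f$ of \eqref{E:canonical_vect}. By Theorem \ref{T:characterization} and Proposition \ref{P:commutant}, $g=Xf$ with $X=\bigoplus_i X_i$ bounded, invertible and block lower-triangular Toeplitz. Each $J^{\lambda}_{n_i}(z_i)$ in \eqref{E:frac_powers} is a polynomial in $J_{n_i}(0)$, so $X$ commutes with every $J^{\lambda}$. Hence $\{J^{\lambda_k}g\}=X\{J^{\lambda_k}f\}$, and one system is a frame iff the other is.

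The plan is to compare the frame operator $S_\Lambda=\sum_k J^{\lambda_k}f\otimes J^{\lambda_k}f$ with the frame operator $S_{\N_0}$ of the integer orbit. The latter is bounded and invertible by Proposition \ref{T:CV frames}. By \eqref{E:frame_vectors} (with $k$ replaced by $\lambda$), the $\delta_{i,j}$-coordinate of $J^\lambda f$ is
\[
(1-z_i^2)^{j+\frac12}\binom{\lambda}{j}z_i^{\lambda-j}.
\]
So the $\big((i,j),(l,s)\big)$ entry of $S_\Lambda$ is a fixed positive weight times
\[
\sum_k\binom{\lambda_k}{j}\binom{\lambda_k}{s}(z_iz_l)^{\lambda_k},
\]
and similarly for $S_{\N_0}$ with $\lambda_k=k$.

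\emph{Sufficiency for $0<L<\infty$.} Since $z_i\ge 0$, all entries of both matrices are nonnegative, apart from the finitely many signs of $\binom{\lambda_k}{j}$ when $\lambda_k<j$, which only affect finitely many terms and can be absorbed. An infinite Carleson sequence in $[0,1)$ tends to $1$, so for $i,l\ge M$ the product $r=z_iz_l$ is close to $1$. Corollary \ref{C:useful_karamata} then shows that every tail entry of $S_\Lambda$ lies within $\varepsilon$ times the corresponding entry of $LS_{\N_0}$, for $j,s<\sup n_i$ (finitely many pairs). If $|A|\le B$ entrywise with $B\ge0$, then $\|A\|\le\|B\|$. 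Therefore the compression of $S_\Lambda-LS_{\N_0}$ to $\bigoplus_{i\ge M}V_i$ has norm at most $\varepsilon\|S_{\N_0}\|$. The rows and columns involving the head $\bigoplus_{i<M}V_i$ are finite rank, provided they are bounded; boundedness also follows from the same entrywise domination. Hence $S_\Lambda$ is bounded and equals (invertible) $+$ (small) $+$ (finite rank). It is therefore a self-adjoint Fredholm operator of index $0$, and is invertible iff it is injective, i.e.\ iff $\{J^{\lambda_k}f\}$ is complete.

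\emph{Completeness.} Let $m=\sup n_i$ and define $T:C^{m}[0,1]\to\ell^2$ by
\[
(T\varphi)_{i,j}=(1-z_i^2)^{j+\frac12}\varphi^{(j)}(z_i)/j!.
\]
This is bounded because the Blaschke condition gives $\sum_i(1-z_i^2)<\infty$. It satisfies $T(x^\lambda)=J^\lambda f$, and $T$ has dense range because $T$ of the polynomials is the span of the integer orbit. Positive density $L>0$ forces $\sum_k\lambda_k/(\lambda_k^2+1)=\infty$. In case (i) all $z_i$ lie in some $[a,1)$ with $a>0$, and Proposition \ref{P:muntz_Cm}(ii) gives density of $\mathrm{span}\{x^{\lambda_k}\}$ in $C^m[a,1]$, hence completeness. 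In case (ii) we use instead part (i) of that proposition, but only on $V_0$: there $J^\lambda_{n_0}(0)f$ has the single coordinate $(0^{\lambda-j})$, which is nonzero only for $\lambda=j$. So $V_0$ is spanned iff $\lambda_j=j$ for $j<n_0$, and on $\bigoplus_{i\ge1}V_i$ we argue as in case (i).

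\emph{Necessity.} If $L=0$, the tail comparison gives $\langle S_\Lambda h,h\rangle\le\varepsilon\langle S_{\N_0}h,h\rangle$ for $h$ supported far out, so no lower frame bound exists. If $L=\infty$, choose a subsequence of $\Lambda$ with any prescribed finite density $L'$. The previous step then gives $\langle S_\Lambda h,h\rangle\ge L'c\|h\|^2$ on the tail for arbitrarily large $L'$, so no upper bound exists. The same $V_0$ computation shows that $\lambda_j=j$ for $j<n_0$ is necessary in (ii).

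The main obstacle is making the entrywise Karamata comparison uniform. We must control the weights $(1-z_i^2)^{j+1/2}(1-z_l^2)^{s+1/2}$ against $(1-z_iz_l)^{-(j+s+1)}$, and treat the $L=\infty$ and finitely many small-$\lambda_k$ terms cleanly. I would try the entrywise domination with the nonnegative matrix $S_{\N_0}$ first, since it sidesteps any off-diagonal decay estimates.
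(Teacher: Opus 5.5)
\textbf{The gap: necessity when $L=\infty$.} This step fails as written. A sequence with $N_\Lambda(x)/x\to\infty$ need not contain any subsequence with a prescribed finite positive natural density. For example, put $4^m$ points of $\Lambda$ in $[2^m,2^m+1)$ for each $m\ge1$ and none elsewhere. Then $N_\Lambda(x)/x>2^{m-3}$ on $[2^m,2^{m+1})$, so $L=\infty$. But every $\Gamma\subseteq\Lambda$ has $N_\Gamma$ constant on $[2^m+1,2^{m+1})$, so
$\frac{N_\Gamma(2^{m+1}-1)}{2^{m+1}-1}=\frac{N_\Gamma(2^m+1)}{2^m+1}\cdot\frac{2^m+1}{2^{m+1}-1}$.
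A limit $L'$ would therefore have to satisfy $L'=L'/2$. This is exactly the obstruction measured by $d^-_{\rm{mac}}$ in the remark after the theorem. Your Karamata comparison needs the density limit to exist, so the ``previous step'' cannot be invoked for such $\Lambda$.

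The repair is to drop the subsequence and use only the Abelian half of Karamata's theorem, which survives $L=\infty$. The integration by parts in the proof of Proposition~\ref{P:derivative_Karamata} gives $t\sum_k e^{-t\lambda_k}\ge\int_0^\infty tN_\Lambda(u/t)e^{-u}\,du$. By Fatou's lemma the right side tends to $\infty$ as $t\to0^+$. With $t=-\log z_i^2$, the diagonal entries $\langle S_\Lambda\delta_{i,0},\delta_{i,0}\rangle=(1-z_i^2)\sum_k z_i^{2\lambda_k}$ are therefore unbounded, and the system is not Bessel. This is how the paper argues.

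\textbf{The rest of your proposal.} Apart from this, the proposal is correct, and your sufficiency argument takes a genuinely different route from the paper's main proof.

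The paper shows $P_MS_\Lambda P_M$ is positive definite on $W_M$. It does this with Schur's test and the row-sum estimate of Lemma~\ref{L:frame_operator2}. It then passes from an outer frame on $W_M$ to a frame on $\ell^2$ by the M\"untz/interpolation induction of Proposition~\ref{P:outer_frame_to_frame}.

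You proceed differently in two places:
\begin{enumerate}
\item You note that $S_{\N_0}$ is entrywise nonnegative because $z_i\ge0$. The entrywise Karamata bound then gives a norm bound of order $\varepsilon\|S_{\N_0}\|$ directly, with no off-diagonal decay estimate.
\item You write $S_\Lambda$ as invertible plus small plus finite rank. So $S_\Lambda$ is self-adjoint Fredholm of index $0$, and invertibility reduces to completeness. You get completeness from the $C^m$ M\"untz theorem, essentially as in Proposition~\ref{P:muntz_sufficient}.
\end{enumerate}
This makes Appendices B and C unnecessary. It is close in spirit to the paper's second proof, but avoids Theorem~\ref{T:redundancy} and Proposition~\ref{prop:compact-synthesis}. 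What the paper's route buys in exchange is the outer-frame-to-frame principle as a reusable tool.

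One point to tighten when writing this up: bounding the head rows of $S_\Lambda$ needs a uniform, not small, bound on the Karamata ratio for $r=z_iz_l$ ranging over compact subsets of $(0,1)$. This holds because $L<\infty$.
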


\begin{remark}
 \emph{The proof of the ``if" direction of the above theorem relies heavily on the natural density of $\Lambda$ being well defined, i.e.~that the limit \eqref{E:limit_assump} exists. That is, the proof will fail (although the statement may remain true) if we relax the hypothesis on $\Lambda$ to }
\begin{equation}\label{E:relaxed}
 0 < \liminf_{x\to \infty} \frac{N_\Lambda(x)}{x} \leq \limsup_{x\to \infty}\frac{N_\Lambda(x)}{x} < \infty .
\end{equation}
\emph{
    \noindent But, it is clear that the condition that the sequence $\Lambda$ has a well-defined natural density may be replaced by the requirement that there exists $\Gamma \subseteq \Lambda$, which has a well-defined natural density (assuming that the system remains Bessel). This can be characterized using the notion of the \emph{macroscopic lower interval density}
    \[
    d^-_{\rm{mac}}(\Lambda) := \inf_{\eta \in (0,1)}\liminf_{x\to \infty}\inf_{0\le y\le(1-\eta)x}\frac{N_\Lambda(x) - N_\Lambda(y)}{x-y}.
    \]
    It is not hard to show that 
    \[
    \Lambda \mbox{ contains } \Gamma \subseteq \Lambda \mbox{ with } \lim_{x\to \infty}\frac{N_\Gamma(x)}{x} = L
    \]
    if and only if $d^-_{\rm{mac}}(\Lambda) \ge L$.}
\end{remark}

We will fix notation for the remainder of this section. Let $\{z_i\}_{i \in \N_0} \subset [0,1)$ be an increasing Carleson sequence of nonnegative real numbers and $\{n_i\}_{i \in \N_0} \subset \N$ a bounded sequence and $n_{*} = \max\limits_{i\in \N_0} n_i$. We have the corresponding decomposition $\ell^2(\N_0) = \bigoplus\limits_{i \in \N_0} V_i$ where $V_i = \overline{\text{span}}\{\delta_{i,j}\}_{j = 0}^{n_i-1}$ as in Section \ref{S: BDCF}. 

Denote by $J$ the operator 
\[
J = \bigoplus_{i \in \N_0} J_{n_i}(z_i)
\]
as before so that each $J_{n_i}(z_i)$ acts on $V_i$. We only need to prove Theorem \ref{T:Density-redundancy} for the canonical generating vector $f$, so we fix 
\[
f = \sum_{i = 0}^\infty \sqrt{1-z_i^2}\delta_{i,0}.
\]

Finally, we will prove both cases of Theorem \ref{T:Density-redundancy} at the same time, similar to the proof of Proposition \ref{P:muntz_Cm}. By $\Lambda = \{\lambda_k\}_{k=0}^\infty$ we denote a strictly increasing sequence of real numbers with $\lambda_k \to \infty$. If $z_0 =  0 \in \sigma_p(J)$, then we always assume $\lambda_j = j$ for $0\leq j \leq n_0-1 $. The frame operator associated to $\{J^{\lambda_k} f\}_{k \in \N_0}$ is denoted by $S_\Lambda$.  

\subsection{Besselness and Necessity Conditions}
We first establish that $\{J^{\lambda_k}g\}_{k \in \N_0}$ is Bessel whenever $\Lambda$ has finite natural density. This will allow us to write a matrix representation for the frame operator $S_\Lambda$, with which we prove the necessity conditions of Theorem \ref{T:Density-redundancy}. 

\begin{proposition}\label{P:besselness}
Assume the hypotheses of Theorem \ref{T:Density-redundancy} and suppose that
\[
    \lim_{x\to\infty}\frac{N_\Lambda(x)}{x}<\infty .
\]
Then \(\{J^{\lambda_k}f\}_{k\in\mathbb N_0}\) is Bessel.
\end{proposition}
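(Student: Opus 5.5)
Finite natural density gives $N_\Lambda(x)\le C(1+x)$ for all $x\ge 0$. The plan is to reduce Besselness of $\{J^{\lambda_k}f\}$ to Besselness of the full orbit $\{J^k f\}$, which is a frame by Proposition \ref{T:CV frames}. Concretely, I will bound the analysis sum $\sum_k |\langle h, J^{\lambda_k}f\rangle|^2$ by a constant times an integral over $t\in[0,\infty)$ of $|\langle h, J^t f\rangle|^2$. That integral will then be compared with the integer sum $\sum_k|\langle h,J^kf\rangle|^2\le B\|h\|^2$.

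\textbf{Step 1: explicit coefficients.} By \eqref{E:frac_powers}, for real $t\ge 0$ (and $t\in\N_0$ or $t>n_0-1$ if $z_0=0$),
\[
J^t f=\sum_i\sum_{j=0}^{n_i-1}\binom{t}{j}(1-z_i^2)^{j+\frac12}z_i^{t-j}\delta_{i,j}.
\]
Writing $h=\sum h_{i,j}\delta_{i,j}$, we get $\phi_h(t):=\langle J^tf,h\rangle=\sum_{i,j}\binom{t}{j}(1-z_i^2)^{j+1/2}z_i^{t-j}\overline{h_{i,j}}$. This is a smooth function of $t$, a combination of terms $t^s z_i^t$.

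\textbf{Step 2: a Plancherel–Pólya / Bernstein-type estimate.} I will show two bounds:
\begin{align*}
\int_0^\infty |\phi_h(t)|^2\,dt &\le C\|h\|^2,\\
\int_0^\infty |\phi_h'(t)|^2\,dt &\le C\|h\|^2.
\end{align*}
The easiest route is to rewrite the integral as an $L^2$ quadratic form. Since $z_i\in[0,1)$,
\[
\int_0^\infty t^{a}z_i^t z_l^t\,dt=\frac{\Gamma(a+1)}{(-\log(z_iz_l))^{a+1}}.
\]
This gives a Gram-type matrix that is, up to bounded block factors, the Gram matrix of the functions $(1-z_i^2)^{j+1/2}\partial_w^j\big(\frac{1}{1-\bar w z}\big)$ evaluated at $z_i$ in a weighted Bergman/Hardy-type setting. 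Its boundedness follows from the Carleson condition, as in the Nikolskii–Vasyunin theorem.

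A cleaner alternative uses the substitution $t\in[m,m+1]$, $t=m+\theta$, $J^t=J^{\theta}J^m$. Then $\phi_h(m+\theta)=\langle J^m f, (J^{\theta})^*h\rangle$, so
\[
\int_0^\infty|\phi_h|^2\,dt=\int_0^1\sum_m|\langle J^mf,(J^\theta)^*h\rangle|^2\,d\theta\le B\sup_{\theta\in[0,1]}\|J^\theta\|^2\|h\|^2,
\]
using the Bessel bound $B$ of the integer orbit. The same trick applies to $\phi_h'$, since $\frac{d}{dt}J^t=J^t\log J$ with $\log J=\bigoplus\log J_{n_i}(z_i)$. The difficulty is that $\log z_i$ is unbounded as $z_i\to 0$. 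A Carleson sequence has at most finitely many $z_i$ below any fixed $\delta>0$, so after splitting off the finitely many small $z_i$ (and the block at $0$, handled by finitely many vectors), $\log J$ is bounded. I expect this to be the main obstacle: uniformly controlling $\|J^\theta\|$ and $\|\log J\|$ on the tail, and handling the $z_0=0$ block when $\lambda_j=j$ for $j<n_0$.

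\textbf{Step 3: discrete sampling.} For a $C^1$ function, the standard estimate is
\[
|\phi(\lambda)|^2\le 2\int_{I}|\phi|^2+2\int_I|\phi'|^2
\]
for any unit interval $I\ni\lambda$. Since $N_\Lambda(x+1)-N_\Lambda(x)\le M$ uniformly (finite density plus $N_\Lambda(x)\le C(1+x)$ gives only linear growth; for a bound on the count per unit interval I will instead use the counting-measure bound via a Stieltjes integration by parts), summing over $k$ yields
\[
\sum_k|\phi_h(\lambda_k)|^2\le C'\big(\|\phi_h\|_{L^2}^2+\|\phi_h'\|_{L^2}^2\big)\le C''\|h\|^2.
\]
If the uniform local count fails, because positive density does not prevent clustering, I will fall back on Step 2's Gram computation directly. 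That means bounding $\sum_k t^a z_i^{t}z_l^{t}$ at $t=\lambda_k$ by Stieltjes integration against $dN_\Lambda$ and integration by parts, exploiting monotonicity of $t^a(z_iz_l)^t$ beyond its maximum and $N_\Lambda(x)\le C(1+x)$. This is in the spirit of Proposition \ref{P:derivative_Karamata}.
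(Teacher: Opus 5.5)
Your main line breaks at Step 3. Summing the local estimate $|\phi_h(\lambda_k)|^2\le 2\int_{I_k}(|\phi_h|^2+|\phi_h'|^2)$ over $k$ gives $C(\|\phi_h\|_{L^2}^2+\|\phi_h'\|_{L^2}^2)$ only if the intervals $I_k$ have bounded overlap, i.e. $\sup_x\bigl(N_\Lambda(x+1)-N_\Lambda(x)\bigr)<\infty$. A finite natural density does not provide this. For example, $\Lambda=\mathbb{N}_0\cup\{2^m+\ell/(m+1):1\le\ell\le m,\ m\ge1\}$ has density $1$, yet $[2^m,2^m+1]$ contains $m+2$ of its points. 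So no inequality $\sum_k|\phi(\lambda_k)|^2\le C(\|\phi\|_{L^2}^2+\|\phi'\|_{L^2}^2)$ can hold for general $\phi\in H^1(0,\infty)$; test it on bumps near $2^m$. Integrating $|\phi_h|^2$ by parts against $dN_\Lambda$ only trades this for the weighted quantity $\int_0^\infty(1+t)|\phi_h(t)|\,|\phi_h'(t)|\,dt$, which your Step 2 estimates do not control. As written, Steps 2--3 prove the statement only for relatively separated $\Lambda$.

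Your fallback points the right way and is essentially the paper's argument, but it omits the step that turns entrywise estimates into a Bessel bound. Your Stieltjes estimate with $N_\Lambda(x)\le C(1+x)$ does give $\sum_k\binom{\lambda_k}{j}\binom{\lambda_k}{s}x^{\lambda_k}\le C(1-x)^{-j-s-1}\le C'\sum_k\binom{k}{j}\binom{k}{s}x^k$ for $x\in[\delta^2,1)$, where $\delta$ is the infimum of the nonzero $z_i$. This is a valid replacement for the paper's Karamata corollary, and it needs only a finite upper density. But comparable entries say nothing by themselves about the norm of the matrix. The appeal to Nikolskii--Vasyunin does not help either, since that theorem concerns the integer orbit, not an arbitrary matrix with comparable entries.

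The missing observation is positivity. Discard the finitely many $\lambda_k\le n_*-1$: finitely many vectors do not affect Besselness, and $J^\lambda_{n_0}(0)=0$ for $\lambda>n_0-1$, so the zero block disappears. Then every coefficient $\binom{\lambda_k}{j}\binom{\lambda_k}{s}(1-z_i^2)^{j+\frac12}(1-z_r^2)^{s+\frac12}z_i^{\lambda_k-j}z_r^{\lambda_k-s}$ of $\sum_k|\langle h,J^{\lambda_k}f\rangle|^2$, viewed as a quadratic form in $h$, is nonnegative. So this sum can only increase when $h$ is replaced by $|h|$. The entrywise comparison then yields $\sum_k|\langle h,J^{\lambda_k}f\rangle|^2\le C'\sum_k|\langle |h|,J^kf\rangle|^2\le C'B\|h\|^2$, with $B$ the Bessel bound of $\{J^kf\}$.

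This works for the products $\binom{\lambda_k}{j}\binom{\lambda_k}{s}$ but not for your monomials $\lambda_k^a$: expanding into monomials introduces coefficients of both signs and destroys the positivity. If you keep the monomials, you must instead apply Schur's test to the nonnegative majorant $(1-z_i^2)^{j+\frac12}(1-z_r^2)^{s+\frac12}(1-z_iz_r)^{-j-s-1}$. That uses the geometric decay $(1-z_{i+1})/(1-z_i)\le c<1$ of a positive Carleson sequence, exactly as the paper does for $S_{\mathbb{N}_0}$ in its appendix.
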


\begin{proof}
It is enough to prove the claim for the case when $0 \notin \sigma_p(J)$ and $\lambda_k > n_*-1$ for all $k\in \N_0$. 

Let $b \in \ell^2(\N_0)$. Then 
\begin{equation}\label{E:expansion}
\ang{b,J^{\lambda_k}f} = \sum_{i\in\mathbb N_0}
    \sum_{j=0}^{n_i-1}
   b_{i,j}
    \binom{\lambda_k}{j}
    (1-z_i^2)^{j+\frac12}
    z_i^{\lambda_k-j}.
\end{equation}
Therefore 
\begin{align*}
\sum_{k=0}^\infty |\langle b,J^{\lambda_k}f\rangle|^2
&=
\sum_{k=0}^\infty
\left|
\sum_{i\in\N_0}\sum_{j=0}^{n_i-1}
b_{i,j}
\binom{\lambda_k}{j}
(1-z_i^2)^{j+\frac12}
z_i^{\lambda_k-j}
\right|^2 \\
&=
\sum_{i,r\in\N_0}
\sum_{j=0}^{n_i-1}
\sum_{s=0}^{n_r-1}
b_{i,j}\overline{b_{r,s}}
\frac{(1-z_i^2)^{j+\frac12}
(1-z_r^2)^{s+\frac12}}{
z_i^{j}z_r^{s}}
\sum_{k=0}^\infty
\binom{\lambda_k}{j}
\binom{\lambda_k}{s}
(z_i z_r)^{\lambda_k}.
\end{align*}
Since $\lambda_k > n_*-1$, the value of the above can only increase if we replace $b$ by its coordinate-wise absolute value. So, assume $b_{i,j} \geq 0$ for all $i,j$. 

Using Karamata's theorem (see Corollary \ref{C:useful_karamata}), there exists a $C > 0$ such that 
\[
\sum_{k=0}^\infty
\binom{\lambda_k}{j}
\binom{\lambda_k}{s}
(z_i z_r)^{\lambda_k} \leq C\sum_{k=0}^\infty
\binom{k}{j}
\binom{k}{s}
(z_i z_r)^{k}. 
\]
Hence, 
\begin{align*}
\sum_{k \in \N_0} |\ang{b,J^{\lambda_k}f}|^2 &\leq C \sum_{i,r\in\N_0}
\sum_{j=0}^{n_i-1}
\sum_{s=0}^{n_r-1}
b_{i,j}b_{r,s}
\frac{(1-z_i^2)^{j+\frac12}
(1-z_r^2)^{s+\frac12}}{
z_i^{j}z_r^{s}}
\sum_{k=0}^\infty
\binom{k}{j}
\binom{k}{s}
(z_i z_r)^{k} \\
&= C\sum_{k \in \N_0} |\ang{b, J^k f}|^2,
\end{align*}
from which the Bessel bound follows since $\{J^k f\}_{k \in \N_0}$ is Bessel.
\end{proof}

\begin{lemma}\label{L:frame_operator}
    The frame operator $S_\Lambda$ associated to $\{J^{\lambda_k} f\}_{k \in \N_0}$ has the matrix representation 
    \[
    \ang{S_\Lambda \delta_{i,j}, \delta_{r,s}} = (1-z_i^2)^{j+\frac{1}{2}}(1-z_r^2)^{s+\frac{1}{2}}\sum_{k \in \N_0} \binom{\lambda_k}{j}\binom{\lambda_k}{s} z_i^{\lambda_k - j} z_r^{\lambda_k - s}. 
    \]
\end{lemma}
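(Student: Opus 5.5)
The plan is to compute the matrix entries directly from the definition of the frame operator. Proposition \ref{P:besselness} shows that $\{J^{\lambda_k}f\}_{k\in\N_0}$ is Bessel, so $S_\Lambda b = \sum_{k} \ang{b, J^{\lambda_k}f} J^{\lambda_k} f$ converges unconditionally for every $b\in\ell^2(\N_0)$. Consequently
\[
\ang{S_\Lambda \delta_{i,j},\delta_{r,s}} = \sum_{k\in\N_0} \ang{\delta_{i,j}, J^{\lambda_k}f}\,\ang{J^{\lambda_k}f,\delta_{r,s}},
\]
and the series converges absolutely because both factor sequences lie in $\ell^2$.

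The remaining step is to identify the coordinates of $J^{\lambda_k}f$. Write $f=\sum_i \sqrt{1-z_i^2}\,\delta_{i,0}$. The operator $J^{\lambda}$ acts blockwise as $J^{\lambda}_{n_i}(z_i)$, so we apply definition \eqref{E:frac_powers}. Since $J_{n_i}(0)^j\delta_{i,0}=\delta_{i,j}$ for $j\le n_i-1$, we obtain the fractional analogue of \eqref{E:frame_vectors}:
\[
J^{\lambda}\delta_{i,0} = \sum_{j=0}^{n_i-1}\binom{\lambda}{j}(1-z_i^2)^j z_i^{\lambda-j}\delta_{i,j}.
\]
This gives $\ang{J^{\lambda_k}f,\delta_{i,j}} = \binom{\lambda_k}{j}(1-z_i^2)^{j+\frac12} z_i^{\lambda_k-j}$, which is exactly the coefficient already used in \eqref{E:expansion}.

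All these quantities are real because $z_i\in[0,1)$ and $\lambda_k\geq 0$, so complex conjugation plays no role. Multiplying the coefficient for $(i,j)$ by the coefficient for $(r,s)$ and summing over $k$ gives the stated formula.

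There are two points that need care, but neither is a real obstacle.
\begin{enumerate}
\item The case $z_0=0$. Here we use the standing convention $\lambda_j=j$ for $j<n_0$. We need to check that $z_0^{\lambda_k-j}$ is read with $0^0=1$, so that $J_{n_0}(0)^{\lambda_k}$ is the ordinary integer power and the formula matches the nilpotent block.
\item The case $\lambda_k<j$ in a block with $z_i>0$. Here $z_i^{\lambda_k-j}$ is still finite, so the formula is unaffected.
\end{enumerate}
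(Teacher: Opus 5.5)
Your proposal is correct and is the same direct computation as the paper's. You expand $\langle S_\Lambda\delta_{i,j},\delta_{r,s}\rangle=\sum_k\langle \delta_{i,j},J^{\lambda_k}f\rangle\langle J^{\lambda_k}f,\delta_{r,s}\rangle$ and insert the block coordinates $\binom{\lambda_k}{j}(1-z_i^2)^{j+\frac12}z_i^{\lambda_k-j}$ obtained from \eqref{E:frac_powers}.

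Two small refinements. First, convergence needs only that the system be Bessel. That is all the paper's necessity argument has available when $L=\infty$, where Proposition \ref{P:besselness} does not apply. Second, for $z_0=0$ you need $\binom{k}{j}0^{k-j}$ to be read as $0$ when $j>k$, and not just the convention $0^0=1$.
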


\begin{proof}
    This is a direct calculation: 
    \begin{align*}
    \ang{S_{\Lambda}\delta_{i,j}, \delta_{r,s}} &=  \bigg \langle \sum_{k = 0}^\infty\ang{J^{\lambda_k}f, \delta_{i,j}} J^{\lambda_k}f, \delta_{r,s}\bigg \rangle \\
    &= \sum_{k \in \N_0} \ang{J^{\lambda_k} f,\delta_{i,j}}\ang{J^{\lambda_k}f,\delta_{r,s}} \\
    &= (1-z_i^2)^{j+\frac{1}{2}}(1-z_r^2)^{s+\frac{1}{2}}\sum_{k \in \N_0} \binom{\lambda_k}{j}\binom{\lambda_k}{s} z_i^{\lambda_k - j} z_r^{\lambda_k - s}
    \end{align*}
as claimed.
\end{proof}

The ``only if" direction of Theorem \ref{T:Density-redundancy} was recently explored for Carleson frames in \cite{GP26arXiv} (absent the natural density hypothesis). The proof of the following proposition, which establishes the ``only if" direction of the Theorem, is nearly identical to their calculations.
\begin{proposition}
    Assume the hypotheses of Theorem \ref{T:Density-redundancy} and that $\{J^{\lambda_k}f\}_{k \in \N_0}$ is a frame. Then 
    \[
    0 <  \lim_{x \to \infty} \frac{N_\Lambda(x)}{x} < \infty. 
    \]
    Furthermore, if $0 \in \sigma_p(J)$ then $\lambda_j = j$ for all $0 \leq j \leq n_0-1$. 
\end{proposition}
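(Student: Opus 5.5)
Since the limit $L$ exists by hypothesis (the statement assumes the natural density \eqref{E:limit_assump} is well defined), we only need to rule out $L=0$ and $L=\infty$ by testing the frame inequalities on the diagonal entries of $S_\Lambda$ from Lemma \ref{L:frame_operator}, i.e.\ on the vectors $\delta_{i,0}$. For $j=s=0$ and $r=i$ the lemma gives
\[
\ang{S_\Lambda \delta_{i,0},\delta_{i,0}} = (1-z_i^2)\sum_{k\in\N_0} z_i^{2\lambda_k}.
\]
A frame has lower bound $A>0$ and upper bound $B<\infty$, so $A\le (1-z_i^2)\sum_k z_i^{2\lambda_k}\le B$ for every $i$. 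The sequence $z_i$ is a Carleson sequence in $[0,1)$, hence infinite and accumulating only at $1$ (Blaschke condition), so we may let $z_i\to 1^-$ along a subsequence. Set $\rho=z_i^2$ and note $1-\rho\asymp -\log\rho = t$. The bound then reads
\[
A\lesssim t\int_0^\infty e^{-tx}\,dN_\Lambda(x)\lesssim B
\]
along a sequence $t_i\to 0^+$. By the Karamata relation \eqref{E:classic_Karamata}, whenever $\lim N_\Lambda(x)/x = L$ exists in $[0,\infty]$ the Laplace expression tends to $L$ as $t\to 0^+$, so $A'\le L\le B'$ and hence $0<L<\infty$.

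To make this airtight without leaning on the two-sided Tauberian statement, I would do the two directions by hand.

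\emph{Upper bound.} If $L=\infty$, then for any $M$ we have $N_\Lambda(x)\ge Mx$ for $x\ge x_0$. Integrating by parts, $t\int e^{-tx}dN_\Lambda = t^2\int N_\Lambda(x)e^{-tx}dx \ge M - o(1)$ as $t\to 0^+$, which contradicts the upper bound $B$.

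\emph{Lower bound.} If $L=0$, the computation in Proposition \ref{P:derivative_Karamata} with $n=0$ (the Abelian direction) gives $t\int e^{-tx}dN_\Lambda\to 0$, which contradicts $A>0$.

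The only subtlety is that the Karamata limit is taken along the particular sequence $t_i=-2\log z_i$ rather than all $t\to0^+$. Because the Abelian limit holds for every $t\to 0^+$, this causes no difficulty.

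For the second claim, suppose $0=z_0\in\sigma_p(J)$ with block size $n_0$. On $V_0$ the vectors $J^{\lambda_k}f$ restrict to $J_{n_0}^{\lambda_k}(0)\delta_{0,0}$. For integer $\lambda_k=m$ this is $\delta_{0,m}$ when $m<n_0$ and $0$ when $m\ge n_0$; for non-integer or large $\lambda_k$ it is defined only when $\lambda_k>n_0-1$, in which case it is $0$ since $J_{n_0}(0)$ is nilpotent (all terms of \eqref{E:frac_powers} vanish because $z^{\lambda-j}=0$). Since the projection of a frame onto $V_0$ must span $V_0$, each $\delta_{0,m}$ with $0\le m\le n_0-1$ must occur as some $J^{\lambda_k}f$. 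This forces $\{0,\dots,n_0-1\}\subset\Lambda$, and because $\Lambda$ is strictly increasing and nonnegative, $\lambda_j=j$ for $0\le j\le n_0-1$. (Strictly speaking, powers $\lambda\notin\N_0$ with $\lambda\le n_0-1$ are undefined, which is exactly why the convention requires this.)

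I expect the main obstacle to be the bookkeeping of the Tauberian step along a discrete sequence $t_i$. It should be handled by noting that only Abelian (direct) estimates are used: the existence of the limit $L\in[0,\infty]$ is assumed, and both degenerate cases are contradicted directly.
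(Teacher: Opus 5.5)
Your proposal is correct and follows the paper's proof. Like the paper, you evaluate $\langle S_\Lambda\delta_{i,0},\delta_{i,0}\rangle=(1-z_i^2)\sum_k z_i^{2\lambda_k}$, let $z_i\to1$, and use the Abelian direction of Karamata's theorem to rule out $L=0$ and $L=\infty$; you then use nilpotency of $J_{n_0}(0)$ to force $\lambda_j=j$ on $V_0$, and your by-hand treatment of $L=\infty$ is slightly more careful than the paper's, which cites \eqref{E:show_instead} although Corollary \ref{C:useful_karamata} is stated only for finite $L$.
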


\begin{proof}
    Let 
    \[
    L = \lim_{x\to \infty} \frac{N_\Lambda(x)}{x}.
    \]
    If $L = 0$, then 
    \[
    \ang{S_\Lambda \delta_{i,0}, \delta_{i,0}} = (1-z_i^2)\sum_{k \in \N_0} z_i^{2\lambda_k} \to L = 0
    \]
    as $i \to \infty$ by Karamata's theorem \eqref{E:show_instead}. Hence $\{J^{\lambda_k}f\}_{k \in \N_0}$ is not a frame, a contradiction. 

    If $L = \infty$, then again we have 
    \[
\ang{S_\Lambda \delta_{i,0}, \delta_{i,0}} = (1-z_i^2)\sum_{k \in \N_0} z_i^{2\lambda_k} \to L = \infty
\]
so $\{J^{\lambda_k}f\}_{k \in \N_0}$ is not Bessel. 

Finally, assume that $z_0 = 0 \in \sigma_p(J)$. Then $J_{n_0}(0)$ is nilpotent, and $P_{V_0}J^\lambda P_{V_0} = J^\lambda_{n_0}(0) = 0$ for all $\lambda > n_0-1$. Since $\dim V_0 = n_0$ and $J^\lambda_{n_0}(0)$ is not well defined for noninteger values $0 \leq \lambda \leq n_0-1$, it is clear that the image of $\overline{\text{span}}\{J^{\lambda_k}f\}$ under the projection $P_{V_0}$ spans $V_0$ only if $\lambda_j = j$ for all $0 \leq j \leq n_0 - 1$. 
\end{proof}

\subsection{Proof of Sufficiency Conditions}  The proof strategy employed here follows the same outline as that of \cite[Theorem~2.3]{KM25d}. 

The following Proposition uses the M\"untz theorem for $C^m$ to show that $\{J^{\lambda_k} g\}_{k \in \N_0}$ is a frame if it is an outer frame for a suitable subspace of finite codimension. This is a direct generalization of \cite[Lemma~2.2]{KM25d}, and its proof is similar, but more technical. So, the proof is postponed to Appendix \ref{AP:B}. 

\begin{proposition}\label{P:outer_frame_to_frame}
     Assume the hypotheses of Theorem \ref{T:Density-redundancy}. If there exists some $M \in \N_0$ such that $\{J^{\lambda_k} f\}_{k \in \N_0}$ is an outer frame for $W_M = \overline{\text{span}}\{\delta_{i,j}\}_{i \geq M} = \bigoplus_{i \geq M} V_i$, then $\{J^{\lambda_k}f\}_{k \in \N_0}$ is a frame.
\end{proposition}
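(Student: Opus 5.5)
The plan is to reduce the frame property to completeness plus a finite-dimensional argument, following \cite[Lemma~2.2]{KM25d}. By Proposition~\ref{P:besselness} the system is Bessel, so the frame operator $S_\Lambda$ is bounded. The outer-frame hypothesis means there is $A>0$ with
\[
\sum_k |\langle b, J^{\lambda_k} f\rangle|^2 \ge A\|b\|^2 \quad\text{for all } b\in W_M .
\]
Since $W_M^\perp = V_0\oplus\cdots\oplus V_{M-1}$ is finite dimensional, a standard perturbation argument shows the following. A Bessel sequence which is an outer frame for a closed subspace of finite codimension is a frame for $\ell^2(\N_0)$ as soon as the synthesis operator has closed range and the system is complete. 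Concretely, write $P$ for the orthogonal projection onto $W_M$. The operator $S_\Lambda$ restricted to $W_M$, compressed by $P$, is bounded below. Then $S_\Lambda$ is a finite-rank perturbation of an operator that is Fredholm of index zero (block-triangular with an invertible corner), so $S_\Lambda$ is Fredholm of index zero. Hence $S_\Lambda$ is invertible if and only if it is injective, i.e. if and only if $\{J^{\lambda_k}f\}$ is complete. So it remains to prove completeness.

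For completeness, suppose $b\perp J^{\lambda_k}f$ for all $k$. I will split $b = b' + b''$ with $b'\in W_M^\perp$ and $b''\in W_M$. The outer-frame property controls $b''$ in terms of the finitely many coordinates of $b'$. More precisely, the map $b'\mapsto b''$ forced by orthogonality is a linear map from a finite-dimensional space. So the annihilator of the system has dimension at most $\dim W_M^\perp$, and we need to show it is zero.

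Here I use the M\"untz theorem for $C^m$ (Proposition~\ref{P:muntz_Cm}). I associate to $b$ the function
\[
F_b(x)=\langle b, J^{x} f\rangle=\sum_{i,j} \overline{b_{i,j}}\binom{x}{j}(1-z_i^2)^{j+\frac12}z_i^{x-j}.
\]
Via the substitution $t=e^{-x}$, or directly on the spectral side, $\langle b, J^{\lambda}f\rangle$ is the pairing of a functional on $C^{m}[0,1]$ with $m=n_*-1$ against the monomial $x^{\lambda}$. The functional is the measure/derivative combination $\mu_b=\sum_{i,j} \overline{b_{i,j}}c_{i,j}\,\delta^{(j)}_{z_i}$, with coefficients making $\mu_b(x^\lambda)=F_b(\lambda)$. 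Since $L>0$, the sequence $\Lambda$ has positive density, so $\sum \lambda_k/(\lambda_k^2+1)=\infty$. In case (ii) the normalization $\lambda_j=j$ for $j<n_0$ supplies the required initial exponents, and exponents between $n_0-1$ and $n_*-1$ can be handled by working on $[0,1]$ for block $0$ and on $[a,1]$ with $a=\min_{i\ge1} z_i>0$ for the rest.

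Proposition~\ref{P:muntz_Cm} then says the monomials $x^{\lambda_k}$ are dense in the relevant $C^m$ space. Hence $\mu_b$ vanishes on all monomials $x^n$, $n\in\N_0$, i.e. $\langle b, J^n f\rangle=0$ for all $n$. Because $\{J^nf\}$ is a frame (Theorem~\ref{T:characterization}), $b=0$.

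The main obstacle is justifying that $\mu_b$ is a bounded functional on $C^m$ when $b$ is infinitely supported. The natural fix is to use the finite-dimensionality first: apply the M\"untz argument only after projecting. I would consider the functional obtained by pairing with $b''$ as a bounded functional given by the frame $\{J^nf\}$ via its $\ell^2$ coefficients. Alternatively, I would define $\mu_b$ on $C^m[0,1]$ via the bounded map $h\mapsto \sum_{i,j}\overline{b_{i,j}}(1-z_i^2)^{j+1/2}h^{(j)}(z_i)/j!$. Its boundedness follows from Cauchy--Schwarz using $\sum_i(1-z_i^2)<\infty$ for a Carleson sequence and $\sup|h^{(j)}|\le\|h\|_{C^m}$.
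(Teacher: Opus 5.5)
Your proof is correct, but it is organized differently from the paper's. The paper argues by induction on $M$ and proves directly that the synthesis operator of $\{P_{M-1}J^{\lambda_k}f\}$ maps onto $W_{M-1}$. It does this in two moves:
\begin{itemize}
\item density of $\{x^{\lambda_k}\}$ in $C^m[z_{M-1},1]$, together with Lemma \ref{L:construction}, produces finitely supported $b_\ell$ whose Hermite data at $z_{M-1}$ are nearly triangular and whose data at $z_i$, $i\ge M$, are small;
\item the correction $c_\ell=\Phi_M^*(\Phi_M\Phi_M^*)^{-1}\Phi_Mb_\ell$ removes the $W_M$-component, so the vectors $\Phi_{M-1}(b_\ell-c_\ell)$ form a basis of $V_{M-1}$.
\end{itemize}

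You instead separate two ingredients. Closed range comes for free from the outer-frame bound on a subspace of finite codimension. Precisely, $S_\Lambda$ differs from the invertible operator $PS_\Lambda P+(I-P)$ by a finite-rank operator, so it is Fredholm of index zero. The block matrix is not triangular, as you describe it, but this finite-rank statement is all your argument needs. Injectivity comes from M\"untz duality, which is exactly the argument the paper later gives as Proposition \ref{P:muntz_sufficient}. Combining the two is the same ``complete frame sequence is a frame'' principle the paper uses in its alternate proof of Theorem \ref{T:Density-redundancy}.

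Your route is shorter and treats all $M$ missing blocks at once. It needs neither the interpolation lemma nor any $\epsilon$-bookkeeping, and it makes clear that the outer-frame hypothesis is used only to get closed range. The paper's route is constructive: it exhibits explicit near-preimages of a basis of each $V_{M-1}$.

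A few details to tighten:
\begin{itemize}
\item In case (i), apply Proposition \ref{P:muntz_Cm}(ii) on $C^m[z_0,1]$ rather than $C^m[0,1]$. Part (i) of that proposition would require $\lambda_j=j$ for all $j\le n_*-1$.
\item The facts $0<L<\infty$ are needed for the M\"untz divergence and for Proposition \ref{P:besselness}, but they are not among the hypotheses of Theorem \ref{T:Density-redundancy}. Derive them from the outer-frame bounds applied to $\delta_{i,0}$, $i\ge M$. Indeed $\ang{S_\Lambda\delta_{i,0},\delta_{i,0}}=(1-z_i^2)\sum_k z_i^{2\lambda_k}$ tends to $0$ if $L=0$ and to $\infty$ if $L=\infty$, exactly as in the paper's necessity argument.
\item In case (ii), your splitting works as follows. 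For $k\ge n_0$ the $V_0$-part of $\mu_b$ vanishes on $x^{\lambda_k}$, so M\"untz on $C^m[z_1,1]$ gives $\ang{J^nf,P_{W_1}b}=0$ for all $n$. Hence $P_{W_1}b=0$, because $\{P_{W_1}J^nf\}$ is a frame for $W_1$. Then $\lambda_j=j$ for $j<n_0$ gives $\ang{J^jf,b}=\overline{b_{0,j}}$, which forces $P_{V_0}b=0$.
\item The paragraph about the map $b'\mapsto b''$ and your first suggested fix are unnecessary. The Cauchy--Schwarz bound using $\sum_i(1-z_i^2)<\infty$ already makes $\mu_b$ bounded.
\end{itemize}
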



Another technical result needed to prove Theorem \ref{T:Density-redundancy} is the following. Once again, the proof is contained in Appendix \ref{AP:C}. 

\begin{lemma}\label{L:frame_operator2}
    The frame operator $S_{\N_0}$ has uniformly bounded summable rows and columns. More precisely, there is a constant $c$ depending only on $\sigma_p(J)$ and a constant $C_{n_{*}}$ depending only on $n_{*}$ such that
    \begin{equation}\label{E:frame_operator_estimate}
    \sum_{i\in \N_0} \sum_{j = 0}^{n_i-1} |\ang{S_{\N_0} \delta_{i,j}, \delta_{r,s}}| \leq \frac{C_{n_{*}}}{(1-c^{1/2})}.
    \end{equation}
    for all $r \in \N_0, \;0\leq s \leq n_r-1$.
\end{lemma}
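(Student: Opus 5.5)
We bound each entry of $S_{\N_0}$ explicitly and then sum, using the hyperbolic separation of the Carleson sequence $\{z_i\}\subset[0,1)$. By Lemma \ref{L:frame_operator} with $\lambda_k=k$,
\[
\ang{S_{\N_0}\delta_{i,j},\delta_{r,s}} = (1-z_i^2)^{j+\frac12}(1-z_r^2)^{s+\frac12}\sum_{k\ge 0}\binom{k}{j}\binom{k}{s}z_i^{k-j}z_r^{k-s}.
\]
Write $\binom{k}{j}\binom{k}{s}$ as a linear combination of $\binom{k-j}{m}$, $0\le m\le j+s$, with coefficients bounded in terms of $n_*$. We then use the generating function $\sum_k \binom{k}{m}x^{k}=x^m/(1-x)^{m+1}$ with $x=z_iz_r$. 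This gives
\[
\big|\ang{S_{\N_0}\delta_{i,j},\delta_{r,s}}\big| \le C'_{n_*}\,\frac{(1-z_i^2)^{j+\frac12}(1-z_r^2)^{s+\frac12}}{(1-z_iz_r)^{j+s+1}},
\]
where we use that all $z$'s are nonnegative, so there are no cancellations to worry about. The case $z_0=0$ only affects finitely many entries and can be absorbed.

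Next we reduce to a pseudohyperbolic quantity. Since $1-z_i^2\le 2(1-z_i)\le 2(1-z_iz_r)$, and similarly for $z_r$, the powers beyond $\frac12$ can be dropped:
\[
\big|\ang{S_{\N_0}\delta_{i,j},\delta_{r,s}}\big|\le C''_{n_*}\frac{\sqrt{(1-z_i^2)(1-z_r^2)}}{1-z_iz_r}=C''_{n_*}\sqrt{1-\rho(z_i,z_r)^2},
\]
where $\rho(z_i,z_r)=\frac{|z_i-z_r|}{1-z_iz_r}$. This uses the identity $1-\rho^2=\frac{(1-z_i^2)(1-z_r^2)}{(1-z_iz_r)^2}$. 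Summing over $j<n_i\le n_*$ costs only a factor $n_*$, so it remains to show
\[
\sup_r\sum_i\sqrt{1-\rho(z_i,z_r)^2}\le \frac{C}{1-c^{1/2}}.
\]

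The main obstacle is this last sum. Since the points lie on $[0,1)$ and are increasing and Carleson, they are uniformly separated and in fact geometrically spaced. Concretely, there is $c<1$ with $1-z_{i+1}\le c(1-z_i)$; this is the standard characterization of Carleson sequences in $[0,1)$ (Hayman–Newman type), and $c$ is the constant in the statement. Taking this as given, for $i>r$ we get $\frac{1-z_i}{1-z_r}\le c^{i-r}$. Then
\[
\frac{(1-z_i^2)(1-z_r^2)}{(1-z_iz_r)^2}\le 4\frac{(1-z_i)(1-z_r)}{(1-z_r)^2}\le 4c^{i-r},
\]
using $1-z_iz_r\ge 1-z_r$. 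Symmetrically, for $i<r$ we use $1-z_iz_r\ge 1-z_i$ and obtain $4c^{r-i}$. Hence
\[
\sum_i\sqrt{1-\rho^2}\le 2\sum_{m\in\mathbb Z}c^{|m|/2}\le \frac{4}{1-c^{1/2}},
\]
which gives \eqref{E:frame_operator_estimate}.

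If the geometric spacing is not available directly, I would derive it from the Carleson condition. For $i>r$ the product $\prod_{i\ne r}\rho(z_i,z_r)\ge\delta$ forces $\sum_i(1-\rho^2)\lesssim\log(1/\delta)$, and consecutive separation on the real segment then yields the ratio bound.
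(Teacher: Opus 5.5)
Your proof is correct and follows essentially the same route as the paper. Both arguments bound each entry by a constant (depending only on $n_*$) times $(1-z_i^2)^{j+\frac12}(1-z_r^2)^{s+\frac12}(1-z_iz_r)^{-(j+s+1)}$, and then use the geometric spacing $1-z_{i+1}\le c(1-z_i)$ of a positive Carleson sequence to dominate each row sum by a two-sided geometric series in $c^{1/2}$. The differences are cosmetic: the paper obtains the entry bound from $\partial_x^j\partial_y^s(1-xy)^{-1}$ rather than from a binomial generating-function expansion, and it bounds $\bigl(\frac{1-z_i}{1-z_r}\bigr)^{j+\frac12}$ directly instead of passing through the pseudohyperbolic quantity $\sqrt{1-\rho(z_i,z_r)^2}$.
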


The final preparatory result is the following easy perturbation lemma. 
\begin{lemma}\label{L:PD_op}
    Let $V \in B(\ell^2)$ be a positive definite operator with bounds $B \geq A > 0$ so that
    \[
    A \|c\|^2\leq \ang{Vc,c} \leq B\|c\|^2 
    \]
    for all $c \in \ell^2$. Further, let $a> 0$ be a positive number and $W $ be a bounded symmetric operator on $\ell^2$. If $\|W\|< a\cdot A$, then 
    \[
    S = aV + W 
    \]
    is a bounded positive definite operator. 
\end{lemma}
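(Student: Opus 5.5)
The plan is to verify the quadratic form bounds directly. Since $V$ and $W$ are bounded and symmetric, $S = aV + W$ is bounded and symmetric, with $\|S\| \le a\|V\| + \|W\| \le aB + \|W\|$. This settles boundedness, and also gives the upper bound $\ang{Sc,c} \le (aB + \|W\|)\|c\|^2$.

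For positive definiteness, take $c \in \ell^2$ and write
\[
\ang{Sc,c} = a\ang{Vc,c} + \ang{Wc,c}.
\]
The first term is at least $aA\|c\|^2$ by the hypothesis on $V$. By Cauchy--Schwarz, the second satisfies $|\ang{Wc,c}| \le \|W\|\,\|c\|^2$. Combining these,
\[
\ang{Sc,c} \ge (aA - \|W\|)\|c\|^2,
\]
and $aA - \|W\| > 0$ by assumption. So $S$ is positive definite with lower bound $aA - \|W\|$ and upper bound $aB + \|W\|$.

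The only point needing care is that $\ang{Wc,c}$ is real, which follows from the symmetry of $W$. Then $\ang{Sc,c}$ is real and the inequality chain makes sense. No step here is a genuine obstacle.
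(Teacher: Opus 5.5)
Your proof is correct and follows the paper's argument essentially verbatim: bounding $|\langle Wc,c\rangle|\le\|W\|\,\|c\|^2$ and combining this with the form bounds on $V$ gives $(aA-\|W\|)\|c\|^2\le\langle Sc,c\rangle\le(aB+\|W\|)\|c\|^2$. Your side claim $\|V\|\le B$ implicitly uses that $V$ is self-adjoint, but you do not need it, since the upper form bound follows directly from the hypothesis on $V$.
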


\begin{proof}
    Since $W$ is symmetric, we have 
    \[
    |\ang{Wc,c}| \leq \|W\|\cdot \|c\|^2
     \]
     for all $c \in \ell^2$. Thus, we can compute directly that 
     \begin{align*}
         \ang{Sc,c} &= \ang{aVc,c} + \ang{Wc,c} \\
         &\geq a\cdot A\|c\|^2 - \|W\|\cdot \|c\|^2 \\
         &\geq (a\cdot A-\|W\|)\cdot \|c\|^2.
     \end{align*}
     By assumption, $a\cdot A-\|W\| > 0$. One similarly shows that 
     \[
     \ang{Sc,c} \leq (a\cdot B+\|W\|)\cdot \|c\|^2,
     \]
     hence we have that 
     \[
     0< (a\cdot A-\|W\|)\cdot \|c\|^2\leq \ang{Sc,c} \leq (a\cdot B+\|W\|)\cdot \|c\|^2,
     \]
     which completes the proof.
\end{proof}

\begin{proof}[Proof of Theorem \ref{T:Density-redundancy}]
    Assume that $0 < L < \infty$ and let $P_M$ denote the orthogonal projection onto $W_M = \bigoplus_{i \geq M} V_i$. By Proposition \ref{P:outer_frame_to_frame} it suffices to show that $\{P_{M} J^{\lambda_k} f\}_{k \in \N_0}$ is a frame for $W_M$ for some $M\geq 0$. We show that the frame operator $P_MS_\Lambda P_M$ is positive definite. 

    Let $A$ denote the lower frame bound for $\{J^{k}f\}_{k \in \N_0}$. Then $A$ is also a lower frame bound for $P_M S_{\N_0}P_M$ for all $M\geq 0$. If we decompose 
    \[
    P_MS_\Lambda P_M = LP_MS_{\N_0}P_M + P_M(S_\Lambda - LS_{\N_0})P_M,
    \]
    then by Lemma \ref{L:PD_op}, we only need to show that $\|P_M(S_\Lambda - LS_{\N_0})P_M\| < L A$ for sufficiently large $M$. To this end, let $\epsilon > 0$ so that 
    \[
    \epsilon < \frac{LA(1-c^{1/2})}{C_{n_{*}}}
    \]
    where $c$ and $C_{n_{*}}$ are as in Lemma \ref{L:frame_operator2}.
    
    By Lemma \ref{L:frame_operator} and Corollary \ref{C:useful_karamata}, we have 
    \begin{align*}
    \frac{(S_\Lambda)_{(i,j),(r,s)}}{(S_{\N_0})_{(i,j),(r,s)}} &= \frac{\sum_{k} \binom{\lambda_k}{j}\binom{\lambda_k}{s} (z_iz_r)^{\lambda_k}}{\sum_{k} \binom{k}{j}\binom{k}{s} (z_iz_r)^{k}} \to L.
    \end{align*}
    So, we may choose $M \gg 0$ so that 
    \[
    |(S_\Lambda)_{(i,j),(r,s)} - L(S_{\N_0})_{(i,j),(r,s)}| < \epsilon |(S_{\N_0})_{(i,j),(r,s)} |
    \]
    for all $i,r \geq M$ and $0 \leq j \leq n_i-1, 0 \leq s \leq n_r-1$. 
    Thus, using \eqref{E:frame_operator_estimate} we can bound the row sums (and hence the column sums) of $P_M(S_\Lambda - LS_{\N_0})P_M$ as
    \begin{equation}\label{E:out_frame_calc}
    \begin{split}
        \sum_{i\in \N_0} \sum_{j = 0}^{n_i-1}  |(P_M(S_\Lambda - LS_{\N_0})P_M)_{(i,j),(r,s)}| &\leq \sum_{i \in \N_0} \sum_{j = 0}^{n_i-1} \epsilon|(S_{\N_0})_{(i,j),(r,s)}| \\
        &\leq \epsilon \frac{C_{n_{*}}}{1-c^{1/2}} \\
        &< LA.
        \end{split}
    \end{equation}
By Schur's test, this implies $\|P_M(S_\Lambda - LS_{\N_0})P_M\| < LA$, which completes the proof of sufficiency.
\end{proof}

\section{Completeness of Operator Orbits}\label{S:completeness}
It was recently conjectured in \cite{ACKM26} that if $\{D^{k}g\}_{k \in \N_0}$ is a Carleson frame with $\sigma_p(D) \subset (0,1)$, then $\{D^{\lambda_k}g\}_{k \in \N_0}$ is again a frame if and only if it is Bessel and $\Lambda = \{\lambda_k\}_{k \in \N_0}$ satisfies the M\"untz condition. The authors in \cite{GP26arXiv} show that while the M\"untz condition is necessary, it is not sufficient to guarantee a lower frame bound (this also follows from Theorem \ref{T:Density-redundancy}). On the other hand, it is known that the M\"untz condition is sufficient for \textit{completeness} of such sequences (see \cite[Theorem~3.6]{KM25d}). So, it is tempting to restate the conjecture as: \textit{$\{D^{\lambda_k}g\}_{k\in \N_0}$ is complete if and only if $\{\lambda_k\}_{k \in \N_0}$ satisfies the M\"untz condition}. We will show that this is again false. 

In this section, we begin an investigation into the completeness of sampled orbits $\{J^{\lambda_k}g\}_{k \in \N_0}$ where $\{J^{k}g\}_{k \in \N_0}$ is a positive spectrum BDC frame. We show that the results proved in \cite{KM25d} for Carleson frames carry over to the block diagonal setting. In particular, we will prove Theorem \ref{T:intro_completeness}, and offer an alternative proof of Theorem \ref{T:Density-redundancy}. As before, we may always assume that $g = f$ is the canonical generating vector for the results in this section.

\subsection{Alternate Proof of Theorem \ref{T:Density-redundancy}}

We begin with a completeness result that generalizes \cite[Theorem~3.5]{KM25d}.

\begin{theorem}\label{T:sector_completeness}
    Let $\{J^k f\}_{k \in \N_0}$ be a block diagonal Carleson frame in which 
    \[
    \sigma_p(J) \subset \D_c = \{re^{i\theta} \mid r \in (0,1), \theta \in [-c,c]\} 
    \]
    for some $0 \leq c < \pi$. If $\Lambda = \{\lambda_k\}_{k \in \N_0} \subset [0,\infty)$ is a strictly increasing sequence with  
    \begin{equation}\label{E:log_density}
    L(\Lambda) := \inf_{\mu > 1} \frac{1}{\log \mu} \limsup_{t \to \infty} \sum_{\Lambda \cap [t,\mu t]}\frac{1}{\lambda} > \frac{c}{\pi}.
    \end{equation}
    Then $\{J^{\lambda_k}f\}_{k \in \N_0}$ is complete in $\ell^2(\N_0)$. 
\end{theorem}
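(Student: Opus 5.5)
Suppose $b \in \ell^2(\N_0)$ is orthogonal to every $J^{\lambda_k}f$; the plan is to show $b = 0$. Using the expansion \eqref{E:expansion} with complex $z_i$ (principal branch), we introduce the function
\[
F(\lambda) = \langle J^{\lambda} f, b\rangle = \sum_{i\in\N_0}\sum_{j=0}^{n_i-1} \overline{b_{i,j}}\binom{\lambda}{j}(1-|z_i|^2)^{j+\frac12} z_i^{\lambda-j},
\]
defined for $\Re\lambda > n_*-1$ (and on all of $\CC$ when $0\notin\sigma_p(J)$, which holds here since $\sigma_p(J)\subset\D_c$ excludes $0$). We will show that $F$ is analytic in the right half-plane $\Re\lambda>n_*$ and satisfies $F(\lambda_k)=0$ for every $k$. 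Writing $z_i = \rho_i e^{i\theta_i}$ with $|\theta_i|\le c$, we have $|z_i^{\lambda}| = \rho_i^{\Re\lambda} e^{-\theta_i \Im\lambda}\le e^{c|\Im\lambda|}$. By Cauchy--Schwarz and the Carleson condition (which makes $\sum_i (1-|z_i|^2)$ finite), together with the polynomial growth of $\binom{\lambda}{j}$, we obtain
\[
|F(\lambda)| \le C(1+|\lambda|)^{n_*} e^{c|\Im \lambda|}
\]
on $\Re\lambda \ge n_*$. Consequently $F$ has exponential type at most $c$ along vertical lines and is bounded, up to a polynomial factor, on horizontal directions.

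The heart of the argument is a uniqueness theorem for functions of this class: a function analytic in a right half-plane, with growth $O(|\lambda|^{N} e^{c|\Im\lambda|})$, whose zeros contain a sequence $\Lambda$ with logarithmic density $L(\Lambda) > c/\pi$ in the sense of \eqref{E:log_density}, must vanish identically. This should be the same complex-analytic ingredient used in \cite[Theorem~3.5]{KM25d}. Concretely, after shifting by $n_*$ and dividing by $(\lambda+1)^{N+1}$, we may map the half-plane to the disc, or alternatively apply the Carleman/Jensen-type formula for the half-plane. The zero-counting sum $\sum 1/\lambda$ over dyadic-type blocks $[t,\mu t]$ is then bounded by the indicator $c/\pi$ of the type; this is the standard Beurling--Malliavin / Carleman estimate that the source invokes. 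We will take this from \cite{KM25d} and apply it verbatim, since $F$ satisfies exactly the same growth bound as in the diagonal case. The only change is that the polynomial factor $(1+|\lambda|)^{n_*}$ is harmless. I expect the main obstacle to be verifying that the bound, with the binomial factors, is uniform enough for that theorem to apply; in particular, checking absolute convergence of the series for $F$ and its derivatives via Cauchy--Schwarz using $\|b\|$ and $\sup_i n_i<\infty$.

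Having established $F\equiv 0$, it remains to extract $b=0$. First, $F(k)=\langle J^k f,b\rangle=0$ for all integers $k\ge n_*$. Because $J^{n_*}$ is invertible here and $\{J^k f\}$ is a frame, $\{J^{k} f\}_{k\ge n_*} = J^{n_*}\{J^kf\}_{k\ge 0}$ is also complete. Hence $b\perp$ a complete set, and so $b=0$. An alternative route avoids this step: distinct $z_i$ give distinct exponentials $z_i^\lambda = e^{\lambda\log z_i}$ multiplied by polynomials in $\lambda$, so one can read off the coefficients directly. However, the frame argument is cleaner, and we plan to use it.
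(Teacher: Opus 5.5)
Your proposal is correct and is essentially the paper's proof. Both use the same function $\langle J^{\lambda}f,b\rangle$, a growth bound of the form polynomial times $e^{c|\Im\lambda|}$ (you bound it on the half-plane $\Re\lambda\ge n_*$, the paper on the imaginary axis), Rubel's generalization of Carlson's theorem for zero sets of logarithmic block density exceeding $c/\pi$, and the frame property of $\{J^kf\}_{k\in\N_0}$ to force $b=0$.

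Cite Rubel's theorem itself rather than relying on your sketches, because neither would work on its own. For $c>0$, dividing by a polynomial does not make $F$ bounded on the half-plane. A direct Carleman-formula estimate only bounds the averaged sum $\sum_{\lambda\le R}1/\lambda$, not the $\limsup$ over blocks $[t,\mu t]$ that defines $L(\Lambda)$.
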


\begin{proof}
    The proof is nearly identical to that of \cite[Theorem~3.5]{KM25d}, so we will omit some details. 

    Let $0 \neq b \in \ell^2(\N_0)$ and consider the complex function $\psi(\omega) = \ang{ J^{\omega}f,b}$. Expanding as in \eqref{E:expansion}, one checks that $\psi$ is an entire function of exponential type. In particular, $\psi$ is not identically zero since $\{J^kf\}_{k \in \N_0}$ is a frame. 
    
    On the imaginary axis,
\[
       |\overline z_i^{\,iy}|\le e^{c|y|}
\]
for $z_i \in \sigma_p(J)$. For $0 \leq q\leq n_*-1$,
\[
       \left|\binom{iy}{q}\right|\leq C(1+|y|)^{n_*-1}.
\]
Consequently
\[
       |\psi(iy)|\leq C(1+|y|)^{n_*-1}e^{c|y|}.
\]
For each $\varepsilon>0$, the polynomial factor is bounded by a constant times $e^{\varepsilon|y|}$. 

By Rubel's generalization of Carlson's Theorem \cite[\S 3]{R56}, it follows that the zero set of $\psi$ satisfies $L(\psi^{-1}(0)) \leq \frac{c}{\pi}$. Therefore $\{J^{\lambda_k}f\}_{k \in \N_0}$ is complete if $L(\Lambda) > \frac{c}{\pi}$.
\end{proof}
\begin{remark}\label{R:LBD}
    \emph{The value $L(\Lambda)$ in \eqref{E:log_density} is called the \textit{logarithmic block density} of $\Lambda$. A sequence $\Lambda$ which has well-defined, positive natural density must also have positive logarithmic block density. In fact, these values are equal in this case. We offer an informal explanation.}

    \emph{If $\Lambda = \{\lambda_k\}_{k \in \N_0}$ has well-defined natural density, then 
    \[
    L =\lim_{x \to \infty} \frac{N_\Lambda(x)}{x} = \lim_{k \to \infty} \frac{k}{\lambda_k}.
     \]
     Hence, $\frac{1}{\lambda_k} \approx \frac{L}{k}$ for $k\gg 0$. So for $t \gg 0$
     \begin{align*}
     \sum_{\lambda_k \in \Lambda\cap [t,\mu t]} \frac{1}{\lambda_k} &\approx \sum_{k = N_{\Lambda}(t)}^{N_\Lambda(\mu t)} \frac{L}{k} \\
     &\approx L \log \frac{N_\Lambda(\mu t)}{N_\Lambda(t)} \\
     &\approx L \log\mu.
     \end{align*}
     Therefore, 
     \[
     \inf_{\mu > 1} \frac{1}{\log \mu} \limsup_{t \to \infty} \sum_{\Lambda \cap [t,\mu t]}\frac{1}{\lambda} = \inf_{\mu > 1} \frac{1}{\log \mu} \cdot L\log{\mu} = L.
     \]}
\end{remark}

Just as in \cite[Theorem 3.6]{KM25d}, in the case of $\sigma_p(J) \subset (0,1)$, the logarithmic block density condition may be replaced by the M\"untz condition.

\begin{proposition}\label{P:muntz_sufficient}
    Let $\{J^kf\}_{k \in \N_0}$ be a block diagonal Carleson frame with $\sigma_p(J) \subset (0,1)$. Suppose $\{\lambda_k\}_{k \in \N_0} \subset [0,\infty)$ is a strictly increasing sequence satisfying 
    \[
    \sum_{k = 0}^\infty \frac{\lambda_k}{\lambda_k^2+1} = \infty.
    \]
    Then $\{J^{\lambda_k}f\}$ is complete. 
\end{proposition}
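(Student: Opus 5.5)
Take $b \in \ell^2(\N_0)$ orthogonal to every $J^{\lambda_k}f$; the plan is to show $b=0$ by turning the orthogonality relations into a Müntz-type approximation problem in a space of smooth functions, which Proposition \ref{P:muntz_Cm} settles.

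\textbf{Step 1.} By \eqref{E:expansion}, with $\sigma_p(J)\subset(0,1)$ and $z_i$ real,
\[
\ang{J^{\lambda}f,b} = \sum_{i}\sum_{j=0}^{n_i-1}\overline{b_{i,j}}\binom{\lambda}{j}(1-z_i^2)^{j+\frac12}z_i^{\lambda-j}.
\]
Introduce the functional $\Phi_b$ on $C^{m}[a,1]$, where $m=n_*-1$ and $0<a\le \inf_i z_i$ (this infimum is positive for a Carleson sequence), given by
\[
\Phi_b(h)=\sum_i\sum_{j=0}^{n_i-1}\overline{b_{i,j}}\,\frac{(1-z_i^2)^{j+\frac12}}{j!}\,h^{(j)}(z_i).
\]
Since $\frac{d^j}{dx^j}x^\lambda=j!\binom{\lambda}{j}x^{\lambda-j}$, we have $\Phi_b(x^{\lambda})=\ang{J^{\lambda}f,b}$. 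The functional $\Phi_b$ is bounded on $C^m[a,1]$ because
\[
|\Phi_b(h)|\le \|h\|_{C^m}\sum_{i,j}|b_{i,j}|(1-z_i^2)^{1/2}\le \|h\|_{C^m}\,\|b\|\Big(n_*\sum_i(1-z_i^2)\Big)^{1/2},
\]
and $\sum_i(1-z_i)<\infty$ holds for Carleson (hence Blaschke) sequences. The one point to check here is that the right endpoint $1$ is harmless: all the $z_i$ lie in $[a,1)$, so the interval $[a,1]$ with $a>0$ suffices.

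\textbf{Step 2.} Under the Müntz condition, Proposition \ref{P:muntz_Cm}(ii) gives that $\{x^{\lambda_k}\}$ is dense in $C^m[a,1]$. This first requires $\lambda_k\to\infty$, which holds for a strictly increasing sequence unless it is bounded. If it is bounded, the sum is still infinite, and one passes instead to the classical fact that for $\lambda_k$ with a finite accumulation point the span is dense on $[a,1]$, since $x^{\lambda}$ is analytic in $\lambda$. Actually it is simpler to avoid this case: in the bounded case, $\psi(\omega)=\ang{J^\omega f,b}$ is entire in $\omega$ (shown in the proof of Theorem \ref{T:sector_completeness}), so vanishing on a set with an accumulation point forces $\psi\equiv 0$. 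Then $\ang{J^kf,b}=0$ for all $k$, and $b=0$ because $\{J^kf\}$ is a frame. So assume $\lambda_k\to\infty$.

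\textbf{Step 3.} Once $\Phi_b$ vanishes on all $x^{\lambda_k}$, density gives $\Phi_b\equiv 0$ on $C^m[a,1]$, in particular on every monomial $x^k$. Hence $\ang{J^kf,b}=\Phi_b(x^k)=0$ for all $k\in\N_0$, and again $b=0$ by the frame property of $\{J^kf\}_{k\in\N_0}$.

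The main obstacle is verifying the boundedness of $\Phi_b$ in Step 1, namely that the weights $(1-z_i^2)^{j+1/2}/j!$ are summable against $\ell^2$ coefficients. This is exactly where the Blaschke condition enters. Everything else reduces to Proposition \ref{P:muntz_Cm}.
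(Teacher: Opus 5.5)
Your proof is correct and follows the paper's argument: the same bounded functional on $C^{n_*-1}[a,1]$ with $a>0$, Proposition \ref{P:muntz_Cm}(ii) when $\lambda_k\to\infty$, and the identity theorem applied to $\psi(\omega)=\ang{J^\omega f,b}$ when $\Lambda$ is bounded. The only minor difference is in how you reach $b=0$. You evaluate $\Phi_b$ on the integer monomials $x^k$ and use completeness of $\{J^kf\}$, whereas the paper asserts that the functional is nonzero via Lemma \ref{L:construction}. You also make explicit the boundedness of $\Phi_b$ through the Blaschke condition, which the paper leaves implicit.
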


\begin{proof}
    Assume first that $\lambda_k \to \infty$. Let $\sigma_p(J) = \{z_i\}_{i \in \N_0}$ with $z_i < z_{i+1}$. If $\{J^{\lambda_k}f\}_{k \in \N_0}$ is incomplete, then there exists a $0\neq b \in \ell^2(\N_0)$ such that 
    \begin{align*}
        0 &= \ang{J^{\lambda_k}f,b} \\
        &= \sum_{i\in\mathbb N_0}
    \sum_{j=0}^{n_i-1}
   \overline{b_{i,j}}
    \binom{\lambda_k}{j}
    (1-z_i^2)^{j+\frac12}
    z_i^{\lambda_k-j}
    \end{align*}
    for all $\lambda_k$. This formula extends naturally to a bounded functional on $C^m[z_0,1]$ (where $m = n_*-1$) via 
    \[
    h \mapsto \sum_{i \in \N_0} \sum_{j = 0}^{n_i-1} \overline{b_{i,j}} \frac{(1-z_i^2)^{j+\frac{1}{2}}}{j!} h^{(j)}(z_i). 
    \]
    It is easy to show that this functional is nonzero (using Lemma \ref{L:construction}, for example). This functional then annihilates all the monomials $x^{\lambda_k}$, but this is impossible by Proposition \ref{P:muntz_Cm}. Therefore $\{J^{\lambda_k}f\}_{k \in \N_0}$ is complete.

    If $\{\lambda_k\}_{k \in \N_0}$ has a finite limit point, and $\ang{J^{\lambda_k}f,b} = 0$ for all $\lambda_k$, then the holomorphic function $\psi(\omega) = \ang{J^\omega f,b}$ is identically zero by the identity theorem. But $\{J^k f\}_{k \in \N_0}$ was assumed to be a frame, a contradiction.
\end{proof}

The following proposition is the key component of an alternative proof of Theorem \ref{T:Density-redundancy}. Its proof is relegated to Appendix \ref{AP:D}.

\begin{proposition}\label{prop:compact-synthesis}
Assume $\sigma_p(J)\subset(0,1)$ and $n_*<\infty$.  Let $r>0$, and let $\lambda_k/(rk)\to1$.  Denote by $\Phi_\Lambda$ and $\Phi_r$ the synthesis operators of
\[
       \{J^{\lambda_k}f\}_{k\ge0}
       \quad\text{and}\quad
       \{J^{rk}f\}_{k\ge0},
\]
respectively.  If both systems are Bessel, then $\Phi_\Lambda-\Phi_r$
is a compact operator.
\end{proposition}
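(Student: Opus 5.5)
We prove compactness of $\Phi_\Lambda-\Phi_r$ by cutting the operator into two pieces: a block that becomes arbitrarily small in norm, and a finite-rank remainder. Both synthesis operators act from $\ell^2(\N_0)$ to $\ell^2(\N_0)$ by $c\mapsto \sum_k c_k J^{\lambda_k}f$ and $c\mapsto\sum_k c_kJ^{rk}f$. We split in two directions.

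\emph{Splitting in the coefficient index $k$.} For $K\in\N$, let $Q_K$ be the coordinate projection onto $\{k\ge K\}$. Then $(\Phi_\Lambda-\Phi_r)(I-Q_K)$ has finite rank. It therefore suffices to show
\[
\lim_{K\to\infty}\|(\Phi_\Lambda-\Phi_r)Q_K\|=0 .
\]
If this fails, we split further in the spectral index. Let $P_M$ be the projection onto $W_M=\bigoplus_{i\ge M}V_i$. Since $\bigoplus_{i<M}V_i$ is finite dimensional, $(I-P_M)(\Phi_\Lambda-\Phi_r)$ has finite rank. It then remains to show that
\[
\|P_M(\Phi_\Lambda-\Phi_r)Q_K\|
\]
can be made small by choosing $M$ and $K$ large.

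\emph{Estimating the remaining block.} We bound its adjoint. For $b\in W_M$, the $k$-th coordinate of $(\Phi_\Lambda-\Phi_r)^*b$ is
\[
\ang{b,J^{\lambda_k}f-J^{rk}f}=\sum_{i\ge M}\sum_{j}\overline{b_{i,j}}\,(1-z_i^2)^{j+\frac12}\Big[\tbinom{\lambda_k}{j}z_i^{\lambda_k-j}-\tbinom{rk}{j}z_i^{rk-j}\Big],
\]
using the expansion \eqref{E:expansion}. The mean value theorem in the exponent bounds the bracket by
\[
|\lambda_k-rk|\,\sup_{\mu\in[\lambda_k,rk]}\Big|\partial_\mu\big(\tbinom{\mu}{j}z_i^{\mu-j}\big)\Big|.
\]
The derivative is at most a constant times $\mu^{j}\big(1+\mu|\log z_i|\big)z_i^{\mu-j}$, and $|\log z_i|\asymp 1-z_i$ for $z_i$ near $1$. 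Write $\lambda_k=rk(1+\eta_k)$ with $\eta_k\to0$. Restricting to $k\ge K$ makes $\sup_{k\ge K}|\eta_k|=:\eta(K)$ small, and the factor $\mu(1-z_i)$ is then exactly the quantity that appears in derivatives of the Hardy-space kernel.

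So each entry of $P_M(\Phi_\Lambda-\Phi_r)Q_K$ is dominated by $\eta(K)$ times an entry of a \emph{comparison} system. That system is built from $\{J^{rk}f\}$ with $n_*$ replaced by $n_*+1$ (one extra derivative), evaluated at the slightly perturbed exponents. Its Gram entries are controlled as in Proposition \ref{P:besselness}. Karamata's theorem (Corollary \ref{C:useful_karamata}) applies because both $\Lambda$ and $r\N_0$ have natural density $1/r$. Combining this with the Schur-type row sum bound of Lemma \ref{L:frame_operator2}, applied to blocks of size $n_*+1$, gives a Bessel bound $B$ for the comparison system that is independent of $K$. Hence
\[
\|P_M(\Phi_\Lambda-\Phi_r)Q_K\|\lesssim \eta(K)\,B\to0 ,
\]
and $\Phi_\Lambda-\Phi_r$ is a norm limit of finite-rank operators, hence compact.

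\emph{Main obstacle.} The hardest step is making the mean value theorem domination uniform. The intermediate point $\mu$ varies with $k$, and $z_i^{\mu}$ must be compared with $z_i^{rk}$ without losing more than a constant. Since $|\mu-rk|\le\eta(K) rk$, we have
\[
z_i^{\mu}\le z_i^{rk(1-\eta)},
\]
so the first attempt is to absorb this by replacing $z_i$ with $z_i^{1-\eta}$. By the argument in the proof of Theorem \ref{T:redundancy}, $\{z_i^{1-\eta}\}$ is still Carleson with uniformly comparable constants, since $1-z_i^{1-\eta}\asymp 1-z_i$. If uniformity in $\eta$ proves delicate, the fallback is to estimate the Gram matrix of the difference system directly via Lemma \ref{L:frame_operator} and Corollary \ref{C:useful_karamata}. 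Its entries are asymptotically $0\cdot S_{\N_0}$, so the Schur test argument from the proof of Theorem \ref{T:Density-redundancy} gives $\|P_M(\Phi_\Lambda-\Phi_r)\|\to0$ as $M\to\infty$ directly.
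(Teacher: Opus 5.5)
Your main route is the paper's own proof. You drop the first $N$ columns (finite rank) and apply the mean value theorem with $\eta_N=\sup_{k\ge N}|\lambda_k/(rk)-1|$. You then make the bound independent of the intermediate point by a fixed fractional power of $z_i$; the paper uses $z_i^{t}\le z_i^{rk/2}$ once $\eta_N<1/2$. Finally you control the resulting nonnegative kernel by a Schur test with row sums as in Lemma~\ref{L:frame_operator2}.

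Some of your scaffolding is not needed:
\begin{itemize}
\item The extra split by $P_M$ is unnecessary.
\item Karamata's theorem is also unnecessary. After the pointwise bound the exponents form the regular sequence $rk(1-\eta_0)$, and the Gram sums follow from $\sum_k(1+k)^dx^k\le C_d(1-x)^{-d-1}$. Corollary~\ref{C:useful_karamata} could not be applied at the intermediate points anyway, since these depend on $(i,j)$ as well as on $k$.
\item Your uniformity concern disappears once you fix a single $\eta_0<1/2$ and use $z_i^{\mu}\le z_i^{rk(1-\eta_0)}$ for all $k\ge N\ge N_0$. The comparison matrix is then independent of $N$, and only the prefactor $\eta_N$ varies.
\end{itemize}

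\textbf{An estimate that must be corrected.} As written, your derivative bound breaks the domination step. Since $\binom{\mu}{j}$ is a polynomial of degree $j$, the correct bound is $|\partial_\mu(\binom{\mu}{j}z_i^{\mu-j})|\lesssim(1+\mu)^{j-1}\bigl(1+(1+\mu)|\log z_i|\bigr)z_i^{\mu-j}$. This is one power of $\mu$ less than what you wrote.

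With your bound, multiplying by $|\lambda_k-rk|\le\eta_N rk$ leaves an uncompensated factor of $k$. The dominating entries $(1-z_i^2)^{j+1/2}(rk)^{j+1}z_i^{rk(1-\eta_0)}$ have diagonal Gram entries of order $(1-z_i)^{-2}$. That comparison system is therefore not Bessel and gives no bound.

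With the correct exponent, the bracket is $\lesssim\eta_N(1+rk)^j\bigl(1+rk(1-z_i)\bigr)z_i^{rk(1-\eta_0)-j}$. This is exactly your ``one extra derivative'' system, and the argument closes, with $\sqrt{B}$ in place of $B$.

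\textbf{The fallback.} It is not a proof as stated. The Gram matrix of the differences contains cross sums $\sum_k\binom{\lambda_k}{j}\binom{rk}{s}z_i^{\lambda_k}z_l^{rk}$. Corollary~\ref{C:useful_karamata} does not cover these, and they depend on $z_i$ and $z_l$ separately rather than only on $z_iz_l$.
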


\begin{proof}[Proof of Theorem \ref{T:Density-redundancy}]
We will only cover the case of $0\notin \sigma(J)$. The extension to the non-invertible case is the same as in the original proof.

Assume $0<L<\infty$ and put $r=L^{-1}$. As observed in Remark \ref{R:LBD}, condition \eqref{E:limit_assump} implies $\lambda_k/(rk)\to 1$.  Theorem~\ref{T:redundancy} says that
\[
       \{J^{rk}f:k\ge0\}
\]
is a frame.  Proposition~\ref{P:besselness} says that the system $\{J^{\lambda_k}f\}$ is Bessel.  Proposition~\ref{prop:compact-synthesis} applies and shows that its synthesis operator is a compact perturbation of the synthesis operator of the reference frame.

By \cite[Theorem~22.2.1]{Christensen16}, a Bessel sequence whose synthesis operator is a compact perturbation of a frame synthesis operator is a frame sequence.  Finally, $\lambda_k\sim rk$ implies the M\"untz divergence condition, and Proposition~\ref{P:muntz_sufficient} says that $\{J^{\lambda_k}f\}$ is complete.  A complete frame sequence is a frame for the whole space.  This proves the result.
\end{proof}

\subsection{On the M\"untz Condition}

Proposition \ref{P:muntz_sufficient} shows that the M\"untz condition is sufficient for completeness of sampled orbits $\{J^{\lambda_k}g\}_{k \in \N_0}$. However, we show here that the M\"untz condition is never necessary for completeness. 

This result will require a recent theorem of Pu-Ting Yu. 

\begin{theorem}[\protect{\cite[Theorem~3.3]{Yu2026}}]\label{T:pu-ting}
    If $\{f_k\}_{k \in \N_0}$ is a frame for $\ell^2(\N_0)$, then there exists a subsequence $\{\lambda_k\}_{k \in \N_0}$ of integers such that $\{f_{\lambda_k}/\|f_{\lambda_k}\|\}_{k \in \N_0}$ is a frame. 
\end{theorem}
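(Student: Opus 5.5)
The plan is to treat the statement as a \emph{sparsification} problem and to deduce it from the Marcus--Spielman--Srivastava solution of the Kadison--Singer problem, in its infinite-dimensional frame form due to Bownik, Casazza, Marcus and Speegle. Let $A,B$ be frame bounds for $\{f_k\}$ and $S=\sum_k f_k\otimes f_k$ the frame operator. Write $u_k=f_k/\|f_k\|$ (discarding $f_k=0$). We need an index set $K\subset\N_0$ such that $\sum_{k\in K}u_k\otimes u_k$ is bounded above and below. The heuristic is that selecting index $k$ with probability $p_k\propto\|f_k\|^2$ makes the expected sum of $u_k\otimes u_k$ over the selected indices a multiple of $S$. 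A Kadison--Singer-type selector should then give a deterministic choice whose sum is comparable to $S$.

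\textbf{Step 1: split off large vectors.} Fix a large constant $C>0$ and put $K_0=\{k:\|f_k\|^2\ge 1/C\}$. For $k\in K_0$ we have $u_k\otimes u_k\le C\, f_k\otimes f_k$. So including all of $K_0$ contributes at most $C\,S\le CB\cdot I$ to the upper bound. These indices are always kept.

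\textbf{Step 2: random model for the small vectors.} For $k\notin K_0$ set $p_k=C\|f_k\|^2<1$. Let $v_k$ be the random vector equal to $\sqrt{\alpha}\,S^{-1/2}u_k$ with probability $p_k$ and $0$ otherwise, where $\alpha>0$ is a normalization. Then
\[
\mathbb E\,[v_k\otimes v_k]=\alpha C\, S^{-1/2}(f_k\otimes f_k)S^{-1/2}.
\]
Summing over $k\notin K_0$ gives $\alpha C\,S^{-1/2}S_{\rm small}S^{-1/2}\le \alpha C\, I$, where $S_{\rm small}=\sum_{k\notin K_0}f_k\otimes f_k$. Also $\mathbb E\|v_k\|^2\le \alpha C\|S^{-1/2}\|^2\|f_k\|^2\le \alpha/A$. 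Choosing $\alpha=1/C$ normalizes the sum of expectations to $S^{-1/2}S_{\rm small}S^{-1/2}$, and every $\mathbb E\|v_k\|^2$ is then at most $1/(AC)$, which is small once $C$ is large. To get an exact identity one can pad the family with deterministic vectors completing $S^{-1/2}S_{\rm small}S^{-1/2}$ to $I$, namely rank-one pieces of $S^{-1/2}S_{\rm large}S^{-1/2}$ split into small-norm parts.

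\textbf{Step 3: two-sided selection.} Apply the MSS/BCMS infinite-dimensional selector theorem. If $\sum_k\mathbb E[v_k\otimes v_k]=I$ and $\mathbb E\|v_k\|^2\le\varepsilon$, then some realization satisfies $\|\sum v_k\otimes v_k\|\le(1+\sqrt\varepsilon)^2$. To obtain a lower bound as well, use the standard two-block trick. Apply the theorem in $\ell^2\oplus\ell^2$ to vectors that are $v_k\oplus 0$ when $k$ is selected and $0\oplus \tilde v_k$ otherwise, with $\tilde v_k$ the analogous normalized vector for the complementary event. This produces a realization in which both the selected and the unselected sums are at most roughly $1+O(\sqrt\varepsilon)$. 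Since the two sums add to a fixed operator bounded below by a constant times $I$, each is also bounded below. Undoing the conjugation by $S^{1/2}$ and adding back $K_0$ shows that $\{u_k\}_{k\in K_0\cup K_1}$ has upper bound of order $CB$ and lower bound of order $C\cdot\frac{A}{B}\cdot\frac1C$, up to the $\varepsilon$-errors. Here $K_1$ is the set of selected small indices.

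\textbf{Main obstacle.} The delicate step is Step 3. The selector theorem bounds only the norm from above, and the lower bound has to be extracted through the complementary-partition argument. This must be done while the selection probabilities $p_k$ are heterogeneous rather than the uniform $1/2$ in Weaver's $KS_2$. I would first try to reduce to uniform probabilities by grouping the small indices into dyadic layers $\{k: 2^{-m-1}<\|f_k\|^2\le 2^{-m}\}$. On each layer I would iterate Weaver-type halving roughly $m-\log_2 C$ times, keeping one piece each time, so the retained piece carries about a $2^{-m}C$ fraction of the layer's frame operator. The halvings would use the BCMS partition theorem for frames with small norms, which keeps both bounds near $1/2$ at each stage. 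I expect the real work to be controlling the accumulated multiplicative errors $(1\pm O(\sqrt{\delta}))$ across layers uniformly; here $\delta$ denotes the maximal squared norm of the vectors being partitioned at a given halving stage. This seems possible because those errors form a convergent geometric product once $C$ is large. Finally, ordering $K_0\cup K_1$ increasingly gives the required integer subsequence $\{\lambda_k\}$.
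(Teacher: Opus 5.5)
The paper does not prove this statement; it is quoted from Yu's preprint, so your sketch has to stand on its own. Steps 1--2 and the upper-bound half of Step 3 are fine: the MSS random-vector theorem allows arbitrary finitely supported distributions, so heterogeneous $p_k$ do no harm to the upper bound. The gap is the lower bound.

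Write $a_k=S^{-1/2}f_k$ and let $K_1$ be the selected small indices. The two-block application controls $X=\sum_{k\in K_1}a_k\otimes a_k/p_k$ and $Y=\sum_{k\notin K_0\cup K_1}a_k\otimes a_k/(1-p_k)$, each by $(1+\sqrt\varepsilon)^2$. But $X+Y$ is not a fixed operator. The fixed identity is $\sum_{k\notin K_0}a_k\otimes a_k=S^{-1/2}S_{\mathrm{small}}S^{-1/2}$, and there the selected part carries weight $p_k$, since $\sum_{k\in K_1}a_k\otimes a_k=\sum_{k\in K_1}p_k\,(a_k\otimes a_k/p_k)$. Even if all $p_k$ equal some $p$, you only get $pX\ge S^{-1/2}S_{\mathrm{small}}S^{-1/2}-(1-p)(1+\sqrt\varepsilon)^2I$, which is vacuous unless $\sqrt\varepsilon\lesssim p$. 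The admissible $\varepsilon$ is at least the expected squared norm of the $k$-th two-block vector, namely $2\|a_k\|^2\ge 2\|f_k\|^2/B$, while $p_k=C\|f_k\|^2$. So $\sqrt\varepsilon\gg p$ in exactly the nontrivial regime $\inf_k\|f_k\|=0$; if the norms are bounded below, $K=\N_0$ already works. The claim ``the two sums add to a fixed operator, hence each is bounded below'' is therefore false, and no one-shot selection of this type gives a lower frame bound.

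Your fallback does not repair this, because the error bookkeeping fails across layers.
\begin{itemize}
\item \emph{Halving each layer separately.} A single layer $\{f_k\}_{k\in L_m}$ is in general only Bessel, not a frame for its closed span. So no two-sided partition theorem is available relative to its own frame operator $T_m=S^{-1/2}S_mS^{-1/2}$. In the global normalization a halving gives $\frac12T_m+E$ with $\|E\|\lesssim\sqrt\delta$, an error measured against $I$, not against $T_m$. During the roughly $h_m=m-\log_2C$ halvings of layer $m$, the rescaled vectors grow to squared norm of order $1/(AC)$; this size is forced by the retention rate $C2^{-m}$ you need. The retained part is then $2^{-h_m}(T_m+E_m)$ with $\|E_m\|$ only bounded by a multiple of $(AC)^{-1/2}$. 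So layer $m$ contributes about $C\,S^{1/2}(T_m+E_m)S^{1/2}$ to $\sum_{k\in K}u_k\otimes u_k$. Over infinitely many layers, $\sum_mT_m\le I$ but $\sum_mE_m$ is uncontrolled.
\item \emph{Halving all unfinished layers together.} If you renormalize by the current total, the errors become multiplicative. But at infinitely many stages some layer is in its final halvings, so the largest squared norm $\delta_t$ among the vectors being split is $\asymp1/(AC)$ there. The product $\prod_t(1\pm O(\sqrt{\delta_t}))$ then degenerates, however large $C$ is.
\end{itemize}
The geometric decay you invoke exists only inside one layer, i.e.\ in the uniform-weight case.

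What is missing is an ordering in which the split vectors shrink geometrically \emph{backwards} from a common final stage. Go from fine to coarse: layer $m$ joins only when its rescaled vectors reach the current common scale, and all layers finish together. Even then, infinitely many scales force you to prove a finite-dimensional version with dimension-free constants. You would also need a limiting argument that keeps lower-frame mass from escaping to large indices.

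Alternatively, the statement follows from Freeman and Speegle's theorem that every bounded continuous frame can be sampled to a frame. Apply it to $k\mapsto u_k$ with $\mu(\{k\})=\|f_k\|^2$; repeated indices in the sample have bounded multiplicity and can be deleted.
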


\begin{proposition}\label{P:Yu}
    Let $\{J^kg\}_{k \in \N_0}$ be a block diagonal Carleson frame with $\sigma_p(J) \subset (0,1)$. Then there exists a sequence $\Lambda = \{\lambda_k\}_{k \in \N_0} \subset \N_0$ such that $\{J^{\lambda_k}g\}_{k \in \N_0}$ is complete and 
    \[
    \sum_{k \in \N_0} \frac{\lambda_k}{\lambda_k^2+1} < \infty.
    \]
\end{proposition}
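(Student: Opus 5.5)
We need to produce a complete subsequence of integers with convergent Müntz sum. The plan is to combine Yu's theorem with the density theorem. Yu's theorem gives a frame subsequence but no control on its sum, so the extraction has to be steered. The natural idea: for a fixed element $J^m g$, the normalized orbit elements are nearly collinear for large $k$. Still, Yu's theorem only ensures existence of some frame subsequence, so we need something sparse.

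The strategy is to work blockwise. Since $\{J^kg\}$ is a frame and $\sigma_p(J)\subset(0,1)$ with the $z_i$ increasing to $1$ (Carleson), the mass of $J^kg$ concentrates on the blocks $V_i$ with $z_i^k$ not small, i.e.\ $1-z_i\lesssim 1/k$. The plan has the following steps.

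\textbf{Reduction.} By Lemma \ref{L:similar} we may take $g=f$. It suffices to find a sparse $\Lambda\subset\N_0$ such that $\{J^{\lambda_k}f\}$ is complete; a frame is not needed.

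\textbf{Completeness via finite-dimensional steps.} Completeness is equivalent to every nonzero $b$ failing to annihilate the system. I would build $\Lambda$ greedily. Fix an enumeration of a countable dense set of targets, e.g.\ the $\delta_{i,j}$. At stage $N$, choose finitely many integers, all larger than any previously chosen and larger than a prescribed $T_N$, whose span approximates $\delta_{i,j}$ for $i\le N$ to within $1/N$. The key fact that makes this possible is that for each $T$ the tail $\{J^kf\}_{k\ge T}$ is still complete. Indeed, $J^Tf$ is again an admissible generator, because $J^T$ is invertible, commutes with $J$, and is bounded with bounded inverse (by the Lemma on $J^\lambda$, since $\inf z_i>0$). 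By Lemma \ref{L:similar}, $\{J^{k}(J^Tf)\}_{k\ge0}$ is a frame, hence complete. So each approximation uses only finitely many exponents beyond any threshold $T_N$.

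\textbf{Sparsity.} At stage $N$ we use finitely many exponents, say $m_N$ of them, all $\ge T_N$. We choose $T_N$ so that $m_N/T_N<2^{-N}$. This is possible since $m_N$ is determined only after $T_N$, so the choice must be made carefully. The main obstacle is that $m_N$ may grow with $T_N$. To fix this, approximate first, then shift: take finitely many $k_1<\dots<k_m$ (with $m$ depending only on $N$ and the targets) such that the span of $\{J^{k_l}f\}$ approximates the targets. Then replace $f$ by $J^Tf$ and approximate $J^{-T}\delta_{i,j}$ instead. The required accuracy worsens by $\|J^{T}\|$-type factors, which again makes $m$ depend on $T$.

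The cleaner alternative is to choose $T$ first, then approximate $\delta_{i,j}$ by $\mathrm{span}\{J^kf\}_{k\ge T}$ using some number $m(T)$ of terms. Then pick the next threshold $T'$ large enough that $\sum$ of $1/\lambda$ over this stage's exponents is at most $m(T)/T$. We cannot shrink $m(T)/T$ this way, but we can add blocks rarely. Note that completeness only needs each target approximated to arbitrary precision infinitely often, not at every stage. So each stage may approximate one fixed target $\delta_{i,j}$ to precision $\epsilon$ with $T$ fixed.

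Let $\epsilon_N\to0$ and fix the target $(i_N,j_N)$. Choose $T_N$, then the finite set $F_N\subset[T_N,\infty)$ with $\mathrm{dist}(\delta_{i_N,j_N},\mathrm{span}\,J^{F_N}f)<\epsilon_N$. The contribution of stage $N$ to the sum is at most $|F_N|/T_N$. The genuinely hard step is showing that $|F_N|$ can be kept $o(T_N)$ along a choice of $T_N$. I would try to control $|F_N|$ uniformly in $T$ by using Proposition \ref{P:muntz_Cm} in $C^m[z_0,1]$ on a fixed finite configuration: approximate the functional data by polynomials in $x^{k}$ with $k$ in an arithmetic-type sparse window. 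Failing that, I would fall back on Theorem \ref{T:pu-ting} applied to a suitably normalized frame, picking sparse subsequences within dyadic windows.
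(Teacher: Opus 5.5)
\textbf{There is a genuine gap: the central sparsity step is left open.} Your reduction to $g=f$ is correct. So is the observation that every tail $\{J^kf\}_{k\ge T}$ is complete ($J^T$ is an invertible element of the commutant, so Lemma \ref{L:similar} applies), and so is the stagewise bookkeeping for completeness. None of this carries the weight of the proof, though.

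Everything rests on the step you flag as genuinely hard: for each target and precision, find a finite $F_N\subset[T_N,\infty)$ with $\sum_N\sum_{\lambda\in F_N}1/\lambda<\infty$. This local statement is essentially equivalent to the proposition itself. Indeed, if one complete $\Lambda$ with convergent M\"untz sum exists, then the shifted systems $\{J^{\lambda+T}f\}_{\lambda\in\Lambda}$ are complete (since $J^T$ is invertible), and their M\"untz sums tend to $0$ as $T\to\infty$. So the greedy scheme reduces the problem to itself.

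Neither of your proposed ways to close the step works. Proposition \ref{P:muntz_Cm} gives approximation only when the M\"untz sum diverges. For exponent sets with convergent sum, M\"untz polynomials are not dense in $C^m[z_0,1]$, so that proposition gives no control over sparse windows. Separately, nothing in your argument prevents $|F_N|$ from growing proportionally to $T_N$; you noticed yourself that shifting degrades the required accuracy. If it does grow that way, $\sum_N|F_N|/T_N$ diverges. Your fallback to Theorem \ref{T:pu-ting} names the right tool but supplies no argument.

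\textbf{The missing idea.} No sparsity has to be engineered: the integer subsequence supplied by Theorem \ref{T:pu-ting} automatically has convergent M\"untz sum. The argument runs as follows.
\begin{enumerate}
\item Let $\Lambda\subset\N_0$ be such that $\{J^{\lambda_k}g/\|J^{\lambda_k}g\|\}$ is a frame. Then $\{J^{\lambda_k}g\}$ is complete.
\item By the resolved Feichtinger conjecture, this normalized frame is a finite union of Riesz sequences.
\item Suppose $\sum_k\lambda_k/(\lambda_k^2+1)=\infty$. Then one of the finitely many pieces, indexed by some $\{\mu_k\}$, still has divergent M\"untz sum.
\item By Proposition \ref{P:muntz_sufficient}, $\{J^{\mu_k}g\}$ is complete. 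Deleting any single element leaves the M\"untz sum divergent, so the system remains complete. Hence it is not minimal.
\item This contradicts the fact that its normalization is a Riesz sequence, and normalization does not affect minimality.
\end{enumerate}
Therefore $\Lambda$ itself fails the M\"untz condition while $\{J^{\lambda_k}g\}$ is complete, which is exactly the claim.
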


\begin{proof}
    Let $\{\lambda_k\}_{k \in \N_0} \subset \N_0$ be the subsequence guaranteed by Theorem \ref{T:pu-ting}. Then $\{J^{\lambda_k}g\}_{k \in \N_0}$ is complete in $\ell^2(\N_0)$. By the resolved Feichtinger conjecture, the normalized frame $\{J^{\lambda_k}g/\|J^{\lambda_k}g\|\}_{k \in \N_0}$ is a union of finitely many Riesz sequences. If $\{\lambda_k\}_{k\in \N_0}$ satisfies the M\"untz condition
    \[
    \sum_{k\in \N_0} \frac{\lambda_k}{\lambda_k^2+1} = \infty,
    \]
    then one such Riesz subsequence $\{J^{\mu_k}g/\|J^{\mu_k}g\|\}_{k \in \N_0}$ must also satisfy 
    \[
    \sum_{k \in \N_0} \frac{\mu_k}{\mu_k^2+1} = \infty.
    \]
    But by Proposition \ref{P:muntz_sufficient}, the sequence $\{J^{\mu_k}g\}_{k \in \N_0}$ is complete but not minimal. Therefore, $\{J^{\mu_k}g/\|J^{\mu_k}g\|\}_{k \in \N_0}$ is not minimal, a contradiction. So, $\{\lambda_k\}_{k \in \N_0}$ must fail the M\"untz condition. 
\end{proof}

\begin{remark}\label{R:spectrum-tailored}
\emph{
The M\"untz condition is universal: it guarantees completeness for every positive Carleson spectrum with block sizes bounded by $n_*$.  For a fixed spectrum, it is generally stronger than necessary.}
\emph{
Let $B$ be the Blaschke product with zeros $z_i$ of multiplicity $n_i$, and let
\[
       K_B=H^2\ominus BH^2.
\]
For integer exponents $\Lambda\subset\Nzero$, the synthesis map in the proof of Proposition~\ref{T:CV frames} has kernel $BH^2$ and induces an isomorphism $K_B\to\ell^2$.  Therefore,
\begin{equation}\label{eq:model-uniqueness}
\begin{aligned}
 &\{J^\lambda g:\lambda\in\Lambda\}\text{ is complete}\\
 &\qquad\Longleftrightarrow
 K_B\cap\bigl\{F\in H^2:\widehat F(\lambda)=0
 \text{ for all }\lambda\in\Lambda\bigr\}=\{0\}.
\end{aligned}
\end{equation}
Equivalently, the coefficient restriction map
\[
       F\longmapsto(\widehat F(\lambda))_{\lambda\in\Lambda}
\]
need only be injective on the particular model space $K_B$.  This can hold for sets with convergent M\"untz sum as illustrated by Proposition \ref{P:Yu}.
}
\end{remark}

The paper \cite{CHPS24} which originally explored the redundancy of Carleson frames ended with a proposition reassuring the reader that, despite the excessive redundancy of Carleson frames, every frame must contain an infinite subsequence which is not itself a frame. We end this paper on a similar note (we mention \cite{INT25} for a slightly weaker result). 

\begin{proposition}
    Let $\{f_k\}_{k \in \N_0}$ be a frame for $\ell^2(\N_0)$. Then there exists an infinite subsequence which is incomplete.
\end{proposition}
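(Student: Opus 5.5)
The plan is to reuse the structural tools already invoked in the proof of Proposition \ref{P:Yu}: Theorem \ref{T:pu-ting} together with the resolved Feichtinger conjecture. Completeness of a set of vectors is unchanged when each vector is rescaled by a nonzero scalar. It therefore suffices to find an infinite subsequence of the \emph{normalized} frame elements that is incomplete. The same index set then gives an incomplete infinite subsequence of $\{f_k\}_{k \in \N_0}$ itself.

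\textbf{Step 1.} Apply Theorem \ref{T:pu-ting} to $\{f_k\}_{k \in \N_0}$ to obtain indices $\{\lambda_k\}_{k \in \N_0}$ such that $\{h_k\}_{k\in \N_0} := \{f_{\lambda_k}/\|f_{\lambda_k}\|\}_{k \in \N_0}$ is a frame for $\ell^2(\N_0)$. In particular, none of the $f_{\lambda_k}$ is zero.

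\textbf{Step 2.} The system $\{h_k\}$ is a unit-norm frame. By the resolved Feichtinger conjecture (Marcus--Spielman--Srivastava), it partitions into finitely many Riesz sequences $\{h_k\}_{k \in I_1}, \dots, \{h_k\}_{k \in I_p}$. The frame is infinite because $\ell^2(\N_0)$ is infinite dimensional, so at least one index set, say $I_1$, is infinite.

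\textbf{Step 3.} Consider $\mathcal{R} = \{h_k\}_{k \in I_1}$ and split into two cases.
\begin{enumerate}
\item[(a)] If $\overline{\text{span}}\,\mathcal{R} \neq \ell^2(\N_0)$, then $\mathcal{R}$ is already an infinite incomplete subsequence.
\item[(b)] Otherwise $\mathcal{R}$ is a complete Riesz sequence, i.e.\ a Riesz basis. A Riesz basis is minimal: for $k_0 \in I_1$, its biorthogonal vector $\tilde h_{k_0} \neq 0$ is orthogonal to every $h_k$ with $k \in I_1 \setminus \{k_0\}$. Hence $\{h_k\}_{k \in I_1\setminus\{k_0\}}$ is an infinite incomplete subsequence.
\end{enumerate}
In either case we obtain an infinite set $I \subset \N_0$ with $\{h_k\}_{k\in I}$ incomplete, and so $\{f_{\lambda_k}\}_{k \in I}$ is incomplete.

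\textbf{Main obstacle.} There is essentially no analytic obstacle once Theorem \ref{T:pu-ting} and the Feichtinger theorem are available. The only points to check are the bookkeeping ones: the subsequence produced by Yu's theorem consists of nonzero vectors, so the normalization is legitimate, and the Feichtinger decomposition applies because a unit-norm frame is a norm-bounded-below Bessel sequence.

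If one wanted to avoid these heavy results, the alternative would be a direct inductive construction. One would build unit vectors $x_n$ with $x_n \perp f_{k_1},\dots,f_{k_n}$ and $\|x_{n+1}-x_n\|$ summably small, using that $f_k \to 0$ weakly. The delicate point there is controlling the ratio $|\ang{x_n, f_k}|/\|P_n f_k\|$, where $P_n$ projects away from the finite span of $f_{k_1},\dots,f_{k_n}$. This control is exactly what the Riesz-basis argument sidesteps, so the route through Theorem \ref{T:pu-ting} is the one to follow.
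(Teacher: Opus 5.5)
Your proposal is correct and is essentially the paper's own argument. You extract a normalized frame via Theorem \ref{T:pu-ting}, split it into finitely many Riesz sequences by the resolved Feichtinger conjecture, and take an infinite piece that is either already incomplete or a Riesz basis from which one vector is removed. Your extra bookkeeping (the selected $f_{\lambda_k}$ are nonzero, some piece is infinite, and rescaling does not affect completeness) only makes explicit what the paper leaves implicit.
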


\begin{proof}
    Let $\{f_{\lambda_k}/\|f_{\lambda_k}\|\}_{k \in \N_0}$ be the frame guaranteed by Theorem \ref{T:pu-ting}. Then this frame is a union of finitely many Riesz sequences. Let $\{f_{\mu_k}/\|f_{\mu_k}\|\}_{k \in \N_0}$ be one such infinite sequence. If this Riesz sequence is incomplete, then so is $\{f_{\mu_k}\}_{k \in \N_0}$. If $\{f_{\mu_k}/\|f_{\mu_k}\|\}_{k \in \N_0}$ is a Riesz basis, then $\{f_{\mu_k}/\|f_{\mu_k}\|\}_{k =1}^\infty$ is incomplete, and so is $\{f_{\mu_k}\}_{k =1}^\infty$.
\end{proof}

\section*{Acknowledgments}  
We wish to thank Ole Christensen and Marzieh Hasannasab for bringing the subject of block diagonal generating matrices to our attention and for further useful discussions. Special thanks are in order to Pu-Ting Yu for the idea behind the proof of Proposition \ref{P:Yu}. Finally, the authors acknowledge the use of generative AI for assistance in reviewing and improving this manuscript. 

\appendix
\numberwithin{equation}{section}
\section{Change of Basis Matrices for Powers of Jordan Blocks}\label{AP:A}
Before proving Proposition \ref{P:Bound_COB}, let us first note an easy linear algebra fact which allows us to avoid direct estimates.
\begin{lemma}\label{L:boundedness}
    Let $\{L_i\}$ be a sequence of $n\times n$ invertible matrices, and $\{U_i\}$ a sequence of $n\times n$ unitary matrices. If the matrices $U_iL_i$ converge to some invertible matrix $L$, then $\sup_i\{\|L_i\|,\|L_i^{-1}\|\} < \infty$.  
\end{lemma}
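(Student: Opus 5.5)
The bound on the $L_i$ follows from the bound on $U_iL_i$, since multiplying by a unitary preserves operator norms. For each $i$ we have $\|L_i\| = \|U_iL_i\|$, because $U_i$ is unitary. Moreover, $L_i^{-1} = (U_iL_i)^{-1}U_i$, so $\|L_i^{-1}\| = \|(U_iL_i)^{-1}\|$. It therefore suffices to show that the sequence $M_i := U_iL_i$ satisfies $\sup_i \|M_i\| < \infty$ and $\sup_i \|M_i^{-1}\| < \infty$.

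The first bound is immediate. A convergent sequence in the finite-dimensional space of $n\times n$ matrices is bounded, so $\sup_i\|M_i\| < \infty$.

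For the inverses, we use that inversion is continuous on the open set of invertible $n\times n$ matrices. Concretely, choose $i_0$ such that $\|M_i - L\| \leq \frac{1}{2\|L^{-1}\|}$ for all $i\ge i_0$. Write
\[
M_i = L\bigl(I + L^{-1}(M_i - L)\bigr),
\]
and note that $\|L^{-1}(M_i-L)\|\le \tfrac12$. A Neumann series argument then shows that $M_i$ is invertible with $\|M_i^{-1}\| \le 2\|L^{-1}\|$ for all $i\ge i_0$. The finitely many indices $i<i_0$ give finitely many finite values $\|M_i^{-1}\|$, and these values exist since each $L_i$, and hence each $M_i$, is invertible by hypothesis. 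Taking the maximum of these values and $2\|L^{-1}\|$ gives $\sup_i\|M_i^{-1}\|<\infty$.

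Combining the two bounds yields $\sup_i\{\|L_i\|,\|L_i^{-1}\|\}<\infty$. No step is a genuine obstacle. The only point needing care is the finitely many indices before convergence sets in, and these are handled by the hypothesis that each $L_i$ is invertible.
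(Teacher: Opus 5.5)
Your proof is correct and follows the same route as the paper: reduce to $M_i = U_iL_i$ using unitary invariance of the operator norm, then use continuity of inversion near the invertible limit $L$. You are more explicit than the paper, which records only $\|L_i\| = \|U_iL_i\|$ and leaves implicit the identity $\|L_i^{-1}\| = \|(U_iL_i)^{-1}\|$, the Neumann-series bound, and the handling of the finitely many initial indices.
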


\begin{proof}
    If $U_i = I$, the $n\times n$ identity, for all $i$, then the lemma is clear since matrix inversion and taking norms are continuous. The general statement follows from this since $\|L_i\| = \|U_iL_i\|$. 
\end{proof}

\begin{proof}[Proof of Proposition \ref{P:Bound_COB}]
For each $i$, write
\[
J_{n_i}(z_i)=z_iI+(1-|z_i|^2)M_{n_i},
\]
where $M_{n_i}$ has ones on the subdiagonal and zeros elsewhere. Then
\[
J_{n_i}(z_i)^r=z_i^rI+S_i,
\]
where
\[
S_i=\sum_{\ell=1}^{n_i-1}\binom{r}{\ell}z_i^{r-\ell}(1-|z_i|^2)^\ell M_{n_i}^\ell.
\]

We choose a Jordan chain for $J_{n_i}(z_i)^r$ as follows. Let $v_{i,0}=\delta_{i,0}$, and for $0\leq j\leq n_i-2$, define
\[
v_{i,j+1}=\frac{1}{1-|z_i^r|^2}S_iv_{i,j}.
\]
Then
\[
J_{n_i}(z_i)^rv_{i,j}
=
(z_i^rI+S_i)v_{i,j}
=
z_i^rv_{i,j}+(1-|z_i|^{2r})v_{i,j+1}.
\]
Thus, $\{v_{i,0},\ldots,v_{i,n_i-1}\}$ is a Jordan chain for $J_{n_i}(z_i^r)$.

Set $L_i=[v_{i,0},\ldots,v_{i,n_i-1}]$. By construction, $ J_{n_i}(z_i)^r=L_iJ_{n_i}(z_i^r)L_i^{-1}.$ It remains to show that $\sup_i\{\|L_i\|,\|L_i^{-1}\|\}<\infty.$

We first compute
\begin{align*}
\frac{1}{1-|z_i|^{2r}}S_i
&=
\sum_{\ell=1}^{n_i-1}\binom{r}{\ell}z_i^{r-\ell}
\frac{(1-|z_i|^2)^\ell}{1-|z_i|^{2r}}M_{n_i}^\ell \\
&=
\frac{rz_i^{r-1}(1-|z_i|^2)}{1-|z_i|^{2r}}M_{n_i}
+
(1-|z_i|^2)Q_i,
\end{align*}
where $Q_i$ is strictly lower triangular and zero on the subdiagonal. Since $\sup\limits_i n_i<\infty$, the matrices $Q_i$ are entrywise uniformly bounded above and below.

Let \(r>0\). For each $i$, write
\[
z_i=\rho_i\zeta_i,
\qquad
\rho_i=|z_i|,
\qquad
|\zeta_i|=1.
\]
Define
\[
c_i
=
\frac{r z_i^{r-1}(1-|z_i|^2)}{1-|z_i|^{2r}}.
\]
Then
\[
c_i
=
\zeta_i^{r-1}
\frac{r\rho_i^{r-1}(1-\rho_i^2)}
     {1-\rho_i^{2r}}.
\]
Since \(\{z_i\}_{i\in\mathbb N_0}\) is a Carleson sequence, we have
\(\rho_i\to 1\). Therefore
\[
\frac{r\rho_i^{r-1}(1-\rho_i^2)}
     {1-\rho_i^{2r}}
\longrightarrow 1.
\]

Now, by induction on $j$,
\[
v_{i,j}
=
c_i^{j}\delta_{i,j}
+
(1-\rho_i^2)q_{i,j},
\]
where $q_{i,j}$ is supported on the coordinates $\{j+1,\ldots,n_i-1\}$ and is entrywise uniformly bounded for \(0\leq j\leq n_i-1\). Therefore
\(L_i\) is lower triangular with diagonal entries
\[
1,c_i,c_i^2,\ldots,c_i^{n_i-1},
\]
and every off-diagonal entry is bounded by a constant multiple of
\(1-\rho_i^2\).

Suppose first that \(n_i=K\) for all \(i\). Let
\[
U_i
=
\operatorname{diag}
\left(
\left(\zeta_i^{1-r}\right)^{j-1}
\right)_{j=1}^{K}.
\]
Then each \(U_i\) is unitary. Moreover, \(U_iL_i\) is lower triangular with
diagonal entries
\[
\left(
\frac{r\rho_i^{r-1}(1-\rho_i^2)}
     {1-\rho_i^{2r}}
\right)^{j-1},
\qquad
1\leq j\leq K,
\]
and every off-diagonal entry is bounded by a constant multiple of
\(1-\rho_i^2\). Hence
\[
U_iL_i\to I_K.
\]
By Lemma \ref{L:boundedness},
\[
\sup_i\{\|L_i\|,\|L_i^{-1}\|\}<\infty.
\]

Finally, suppose the block sizes are not all equal. Let
\[
K=\sup_i n_i<\infty.
\]
For each \(1\leq m\leq K\), apply the preceding argument to the subsequence
of indices satisfying \(n_i=m\). This gives a uniform bound on
\(\|L_i\|\) and \(\|L_i^{-1}\|\) along each such subsequence. Since there
are only finitely many possible block sizes, taking the maximum over
\(1\leq m\leq K\) gives
\[
\sup_i\{\|L_i\|,\|L_i^{-1}\|\}<\infty.
\]
\end{proof}

\section{From Outer Frames to Frames}\label{AP:B} Now we prove Proposition \ref{P:outer_frame_to_frame}. We will first single out an easy interpolation lemma. 

\begin{lemma}\label{L:construction}
Let $\{z_i\}_{i\in\N_0}$ be a strictly increasing sequence in $[0,1)$
with $z_i\to1$. For every $0\le\ell\le m$ and every
$a_0,\ldots,a_\ell\in\R$, there exists $f\in C^m[0,1]$ such that, for
$0\le n\le m$,
\[
f^{(n)}(z_i)=
\begin{cases}
a_n, & i=0 \text{ and } 0\le n\le\ell,\\
0,   & i\ge1 \text{ or } \ell<n\le m.
\end{cases}
\]
\end{lemma}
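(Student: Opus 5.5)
The construction will be a local bump: a Taylor polynomial at $z_0$ with the prescribed derivatives, multiplied by a smooth cutoff that equals $1$ near $z_0$ and vanishes near every other $z_i$.

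Since $\{z_i\}$ is strictly increasing with $z_0<z_1$, set $\eta=(z_1-z_0)/2>0$. Then the interval $(z_0-\eta,z_0+\eta)$ contains no $z_i$ with $i\ge1$, and in fact every $z_i$ with $i\geq 1$ satisfies $z_i\geq z_1 = z_0+2\eta$.

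First take the polynomial
\[
p(x)=\sum_{n=0}^{\ell}\frac{a_n}{n!}(x-z_0)^n .
\]
It satisfies $p^{(n)}(z_0)=a_n$ for $0\le n\le \ell$ and $p^{(n)}(z_0)=0$ for $\ell<n\le m$, since $p$ has degree at most $\ell$.

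Next fix a $C^\infty$ cutoff $\chi$ on $\R$ with $\chi\equiv1$ on $[z_0-\eta/2,z_0+\eta/2]$ and $\chi\equiv0$ outside $(z_0-\eta,z_0+\eta)$. The standard choice built from $e^{-1/t}$ works.

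Define $f=\chi p$, restricted to $[0,1]$. This function is $C^\infty$, so in particular $f\in C^m[0,1]$. Near $z_0$ we have $f=p$, which gives the required derivatives at $i=0$. Every $z_i$ with $i\ge1$ has a neighbourhood on which $\chi$, and hence $f$, vanishes identically, so all derivatives of $f$ vanish there.

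A possible edge issue arises when $z_0=0$: then the cutoff interval pokes below $0$. Restricting to $[0,1]$ still gives one-sided derivatives that agree with those of $\chi p$, so there is no problem. The accumulation $z_i\to1$ is also harmless, because $f$ vanishes on all of $[z_0+\eta,1]$.
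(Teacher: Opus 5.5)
Your proposal is correct and is the same construction as the paper's, which takes $f$ to be a smooth cutoff times the Taylor polynomial $\sum_{n=0}^{\ell} a_n (x-z_0)^n/n!$. You supply the details the paper leaves implicit: the choice of $\eta$, the support of the cutoff separating $z_0$ from all $z_i$ with $i\ge 1$, and the one-sided derivatives at $z_0=0$.
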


\begin{proof}
    The function may be chosen of the form 
    \[
    f(x) =  g(x)\sum_{n = 0}^\ell \frac{a_n(x-z_0)^n}{n!}
    \]
    for a suitable smooth cutoff function $g$.
\end{proof}

\begin{proof}[Proof of Proposition \ref{P:outer_frame_to_frame}]
     Let $P_M$ denote orthogonal projection onto $W_M$. Then, by assumption $\{P_M J^{\lambda_k} f\}_{k \in \N_0}$ is a frame for $W_M$. By Proposition \ref{P:besselness}, we may choose $B \geq A > 0$ to be frame bounds for both $\{J^k f\}_{k \in \N_0}$ and $\{P_{M}J^{\lambda_k} f\}_{k \in \N_0}$ so that $B$ is also a Bessel bound for $\{P_{M-1}J^{\lambda_k} f\}_{k \in \N_0}$.
    
    By induction, it suffices to show that the synthesis operator for $\{P_{M-1} J^{\lambda_k} f\}_{k \in \N_0}$ is surjective on $W_{M-1}$. Consider a set of $n_{M-1}$ vectors in $V_{M-1} = W_{M-1} \ominus W_M$ of the form $v_\ell = \sum_{j = 0}^{n_{M-1}-1} a_{\ell,j}\delta_{M-1,j}$ for $0 \leq \ell \leq n_{M-1}-1$. Then there exists an $\epsilon > 0$ such that 
    \begin{equation}\label{E:triangular_coeffs}
    \begin{split}
    |a_{\ell,j} - 1| &< \epsilon \text{ for } j \leq \ell\\
    |a_{\ell,j}| &< \epsilon \text{ for } j > \ell
    \end{split}
    \end{equation}
    implies the $v_\ell$ are linearly independent, and hence a basis for $V_{M-1}$. We will show that such a system of vectors exists in the range of the synthesis operator of $\{P_{M-1}J^{\lambda_k} f\}$, which is enough to show surjectivity. Furthermore, it suffices to prove the claim when $n_{M-1} = n_*$ for we may always project onto a smaller generalized eigenspace of $J_{n_{*}}(z_{M-1})$.

    By $\vartheta_b(z)$ we will denote the power series
    \[
    \vartheta_b(z) = \sum_{k = 0}^\infty b_k z^{\lambda_k}.
    \]
    Fix a value $0 \leq \ell \leq n_{*}-1$ and let $\epsilon > 0$. Invoking both Proposition \ref{P:muntz_Cm} on $C^m[z_{M-1},1]$ for $m = n_{*}-1$ and Lemma \ref{L:construction}, we may choose a finitely supported vector $b_\ell \in \ell^2(\N_0)$ such that the following hold: 
    \begin{align}
        \bigg| \frac{(1-z_{M-1}^2)^{j+\frac{1}{2}}}{j!}\vartheta_{b_\ell}^{(j)}(z_{M-1}) - 1\bigg| &< \epsilon \;\text{ for } 0 \leq j \leq \ell \label{ineq:first}\\
        \bigg| \vartheta_{b_\ell}^{(j)}(z_{M-1})\bigg| &< \epsilon \; \text{ for } \ell < j \leq n_{M-1}-1 \label{ineq:second} \\
        \bigg| \vartheta^{(j)}_{b_\ell}(z_{i})\bigg| &< \epsilon \; \text{ for } i \geq M. \label{ineq:third}
    \end{align}
    Note that here we are implicitly using the hypothesis  $n_* = n_{M-1}$ and that is important for the case $M - 1 = 0$.
    
    Let $\Phi_M$ denote the synthesis operator for $\{P_M J^{\lambda_k} f\}_{k \in \N_0}$ and define 
    \[
    c_\ell = \Phi_M^*\bigg(\Phi_M\Phi_M^*\bigg)^{-1} \Phi_Mb_\ell,
    \]
    so that $\Phi_M c_\ell = \Phi_M b_\ell$. Then, using the formula for the synthesis operator derived in Section \ref{S: BDCF}, we have   
    \begin{align*}
    \|c_\ell\| &\leq \bigg\| \Phi_M^*\bigg(\Phi_M\Phi_M^*\bigg)^{-1}\bigg\|\cdot \|\Phi_M b_\ell\| \\
    &\leq \frac{\sqrt{B}}{A} \bigg(\sum_{i = M}^\infty \sum_{j = 0}^{n_i-1} \frac{(1-z_i^2)^{2j+1}}{j!^2}|\vartheta_{b_\ell}^{(j)}(z_i)|^2 \bigg)^{1/2} \\
    &\leq \frac{\sqrt{B}}{A} \epsilon \bigg(\sum_{i = M}^\infty \sum_{j = 0}^{n_i-1} \frac{(1-z_i^2)^{2j+1}}{j!^2} \bigg)^{1/2} \\
    &=:  \frac{\sqrt{B}}{A} \epsilon C,
    \end{align*}
    where we have used \eqref{ineq:third}. Note that this means 
    \[
    \|\Phi_{M-1}c_\ell\|_{\infty} \leq \| \Phi_{M-1}\|\frac{\sqrt{B}}{A}C\epsilon \leq \frac{BC}{A}\epsilon
    \]

    Now set 
    \begin{equation*}
    \begin{split}
    v_\ell := \Phi_{M-1}(b_\ell-c_\ell) &= \sum_{j = 0}^{n_{M-1}-1} \frac{(1-z_{M-1}^2)^{j+\frac{1}{2}}}{j!}(\vartheta^{(j)}_{b_\ell}(z_{M-1}) - \vartheta^{(j)}_{c_\ell}(z_{M-1}) )\delta_{M-1,j} + \Phi_M(b_\ell - c_\ell) \\
    &= \sum_{j = 0}^{n_{M-1}-1} \frac{(1-z_{M-1}^2)^{j+\frac{1}{2}}}{j!}(\vartheta^{(j)}_{b_\ell}(z_{M-1}) - \vartheta^{(j)}_{c_\ell}(z_{M-1}) )\delta_{{M-1,j}}.
    \end{split}
    \end{equation*}
    Thus, if $0 \leq j \leq \ell$, by \eqref{ineq:first} we have the estimate 
    \begin{align*}
    |\ang{v_\ell, \delta_{M-1,j}}-1| &\leq \bigg| \frac{(1-z_{M-1}^2)^{j+\frac{1}{2}}}{j!}\vartheta^{(j)}_{b_\ell}(z_{M-1})-1\bigg|  + \bigg| \frac{(1-z_{M-1}^2)^{j+\frac{1}{2}}}{j!}\vartheta^{(j)}_{c_\ell}(z_{M-1}) \bigg|  \\
    &\leq \epsilon\bigg(1 +\frac{BC}{A} \bigg).
    \end{align*}
If $\ell < j$, by \eqref{ineq:second} 
\begin{align*}
    |\ang{v_\ell, \delta_{M-1,j}}| &\leq \frac{(1-z_{M-1}^2)^{j+\frac{1}{2}}}{j!}(|\vartheta^{(j)}_{b_\ell}(z_{M-1})| + |\vartheta^{(j)}_{c_\ell}(z_{M-1})|) \\
    &\leq \epsilon\frac{(1-z_{M-1}^2)^{j+\frac{1}{2}}}{j!}(1 + \frac{BC}{A} ).
\end{align*}
Thus, if $\epsilon$ is chosen to be small enough, the discussion around \eqref{E:triangular_coeffs} proves that $v_0,...,v_{n_{M-1}-1}$ form a basis for $V_{M-1}$ and are defined to be in the range of $\Phi_{M-1}$. This completes the proof.
\end{proof}

\section{BDC Frame Operator}\label{AP:C} We supply the proof of Lemma \ref{L:frame_operator2}.

\begin{proof}[Proof of Lemma \ref{L:frame_operator2}]
For simplicity, let us assume $\sigma_p(J) \subset (0,1)$. Then 
    \begin{align*}
        \ang{S_{\N_0}\delta_{i,j}, \delta_{r,s}} &= (1-z_i^2)^{j+\frac{1}{2}}(1-z_r^2)^{s+\frac{1}{2}}\sum_{k \in \N_0} \binom{k}{j}\binom{k}{s} z_i^{k - j} z_r^{k - s} \\
        &= \frac{(1-z_i^2)^{j+\frac{1}{2}}(1-z_r^2)^{s+\frac{1}{2}}}{j!s!}  \frac{\partial^{j+s}}{\partial x^{j}\partial y^{s}} \bigg(\frac{1}{1-xy}\bigg)\bigg|_{x=z_i,y=z_r}.    
 \end{align*}
 
 By the quotient rule, 
 \[
 \frac{\partial^{j+s}}{\partial x^{j}\partial y^{s}}\bigg(\frac{1}{1-xy}\bigg) = O\bigg(\frac{1}{(1-xy)^{j+s+1}}\bigg).
 \]
 Consequently, because $j,s \leq n_{*}-1$, there exists a constant $C$ depending only on $n_{*}$ such that 
 \[
 \frac{\partial^{j+s}}{\partial x^{j}\partial y^{s}}\bigg(\frac{1}{1-xy}\bigg) \leq \frac{C}{(1-xy)^{j+s+1}}.
 \]
 holds on $x,y \in (0,1)$ for all pairs $j,s$.

 Observe the string of inequalities 
 \begin{align*}
     \sum_{i\in \N_0} \sum_{j = 0}^{n_i-1} |\ang{S_{\N_0} \delta_{i,j}, \delta_{r,s}}| &\leq \sum_{i \in \N_0}\sum_{j = 0}^{n_{*}-1} \frac{(1-z_i^2)^{j+\frac{1}{2}}(1-z_r^2)^{s+\frac{1}{2}}}{j!s!} \frac{C}{(1-z_iz_r)^{j+s+1}} \\
     &\leq C\sum_{i \in \N_0}\sum_{j = 0}^{n_{*}-1} \frac{(1-z_i^2)^{j+\frac{1}{2}}(1-z_r^2)^{s+\frac{1}{2}}}{(1-z_iz_r)^{j+s+1}} \\
    &\leq  C \sum_{i \in \N_0}\sum_{j = 0}^{n_{*}-1} \frac{2^{j+s+1} (1-z_i)^{j+\frac{1}{2}} (1-z_r)^{s+\frac{1}{2}}}{(1-z_iz_r)^{j+s+1}} \\
 \end{align*}
 where we have used that $(1-z^2) \leq 2(1-z)$ for $z \in (0,1)$. 
 
 Now, since $\{z_i\}_{i \in \N_0}$ is a positive Carleson sequence, there exists a constant $c \in (0,1)$ such that 
 \[
 \frac{1-z_{i+1}}{1-z_i} < c
 \]
 for all $i \in \N_0$ (see \cite[Lecture~VII]{NN2012}). So, if $i \geq r$, then 
 \begin{align*}
 \frac{(1-z_i)^{j+\frac{1}{2}} (1-z_r)^{s+\frac{1}{2}}}{(1-z_iz_r)^{j+s+1}} &\leq  \frac{(1-z_i)^{j+\frac{1}{2}} (1-z_r)^{s+\frac{1}{2}}}{(1-z_r)^{j+s+1}} \\
 &= \bigg(\frac{1-z_i}{1-z_r}\bigg)^{j+\frac{1}{2}}\\
 &\leq c^{(j+\frac{1}{2})(i-r)} \\
 &\leq c^{\frac{1}{2}(i-r)}.
 \end{align*}
 Similarly, if $i < r$, then  
 \[
 \frac{(1-z_i)^{j+\frac{1}{2}} (1-z_r)^{s+\frac{1}{2}}}{(1-z_iz_r)^{j+s+1}} \leq c^{\frac{1}{2}(r-i)}. 
 \]
 Setting $C_{n_{*}} = n_{*} 2^{2n_{*}-1}C$, we have 
 \begin{align*}
     C \sum_{i \in \N_0}\sum_{j = 0}^{n_{*}-1} \frac{2^{j+s+1} (1-z_i)^{j+\frac{1}{2}} (1-z_r)^{s+\frac{1}{2}}}{(1-z_iz_r)^{j+s+1}} &\leq \frac{1}{2}C_{n_{*}}\bigg(\sum_{i = 0}^{r-1} c^{\frac{1}{2}(r-i)} + \sum_{i = r}^\infty c^{\frac{1}{2}(i-r)}\bigg) \\
     &\leq \frac{C_{n_{*}}}{1 - c^{1/2}}
 \end{align*}
 and the proof is complete.
\end{proof}

\section{Compactness of the perturbation}\label{AP:D} Finally, we prove Proposition \ref{prop:compact-synthesis}.

\begin{proof}[Proof of Proposition \ref{prop:compact-synthesis}]
For a block coordinate $(i,q)$, set
\[
 u_{i,q}(t)=\binom tq(1-z_i^2)^{q+1/2}z_i^{t-q}.
\]
The $k$-th column of $K:=\Phi_\Lambda-\Phi_r$ has coordinates
\[
       u_{i,q}(\lambda_k)-u_{i,q}(rk).
\]
Let $K_N$ be the operator obtained by keeping only the first $N$ columns. Clearly, $K_N$ has finite rank, and it suffices to show that $\norm{K-K_N}\to0$.

Put
\[
       \eta_N=\sup_{k\ge N}\left|\frac{\lambda_k}{rk}-1\right|,
\]
so that $\eta_N\downarrow0$, and  for $N$ large, we have $\eta_N<1/2$.  The mean value theorem and differentiation of $\binom tq z_i^t$ give, uniformly for $q<n_*$ and for $t$ between $rk$ and $\lambda_k$,
\begin{equation}\label{eq:column-difference}
\begin{split}
 |u_{i,q}(\lambda_k)-u_{i,q}(rk)|
 \le{}& C\eta_N(1-z_i^2)^{q+1/2}z_i^{rk/2-q}\\
 &\times\big((1+rk)^q+|\log z_i|(1+rk)^{q+1}\big).
\end{split}
\end{equation}
Indeed, $|\lambda_k-rk|\le\eta_Nrk$, while $z_i^t\le z_i^{rk/2}$ on the intervening interval.

Form the matrix of the tail operator $(K-K_N)(K-K_N)^*$.  By Cauchy--Schwarz in the $k$ sum, \eqref{eq:column-difference}, and the standard estimates
\[
       \sum_{k\ge0}(1+k)^d x^k\le C_d(1-x)^{-d-1},
       \qquad
       |\log z|\asymp1-z\quad(z\uparrow1),
\]
each matrix entry is bounded by $C\eta_N^2$ times a finite sum of kernels of the form
\[
 \frac{(1-z_i^2)^{p+1/2}(1-z_j^2)^{q+1/2}}
      {(1-z_i^{r/2}z_j^{r/2})^{p+q+1}},
 \qquad p,q<n_*.
\]
An argument similar to the one in Proposition~\ref{P:besselness} gives uniform row and column sums for these kernels.  Schur's test therefore yields
\[
       \norm{(K-K_N)(K-K_N)^*}\le C\eta_N^2,
\]
and, hence, $\norm{K-K_N}\le C^{1/2}\eta_N\to0$.  Thus, $K$ is the norm limit of finite-rank operators and is compact.
\end{proof}

\bibliographystyle{siam}
\bibliography{refs1}

@misc{Yu2026,
      title={Every semi-normalized unconditional Schauder frame in Hilbert spaces contains a frame}, 
      author={Pu-Ting Yu},
      year={2026 arXiv 2602.21616},
      eprint={2602.21616},
      archivePrefix={arXiv},
      primaryClass={math.CA},
      url={https://arxiv.org/abs/2602.21616}, 
}

@article {BHR22,
    AUTHOR = {Bergelson, Vitaly and Huryn, Jake and Raghavan, Rushil},
     TITLE = {Discordant sets and ergodic {R}amsey theory},
   JOURNAL = {Involve},
  FJOURNAL = {Involve. A Journal of Mathematics},
    VOLUME = {15},
      YEAR = {2022},
    NUMBER = {1},
     PAGES = {89--130},
      ISSN = {1944-4176,1944-4184},
   MRCLASS = {05D10 (37A44)},
  MRNUMBER = {4396354},
MRREVIEWER = {Siming\ Tu},
       DOI = {10.2140/involve.2022.15.89},
       URL = {https://doi-org.proxy.library.vanderbilt.edu/10.2140/involve.2022.15.89},
}

@book {Seip04,
    AUTHOR = {Seip, Kristian},
     TITLE = {Interpolation and sampling in spaces of analytic functions},
    SERIES = {University Lecture Series},
    VOLUME = {33},
 PUBLISHER = {American Mathematical Society, Providence, RI},
      YEAR = {2004},
     PAGES = {xii+139},
      ISBN = {0-8218-3554-8},
   MRCLASS = {30E05 (30D45 30D55 46E15 46E20 47B35 94A20)},
  MRNUMBER = {2040080},
MRREVIEWER = {Richard\ Rochberg},
       DOI = {10.1090/ulect/033},
       URL = {https://doi-org.proxy.library.vanderbilt.edu/10.1090/ulect/033},
}

@article {Almira07,
    AUTHOR = {Almira, J. M.},
     TITLE = {M\"untz type theorems. {I}},
   JOURNAL = {Surv. Approx. Theory},
  FJOURNAL = {Surveys in Approximation Theory},
    VOLUME = {3},
      YEAR = {2007},
     PAGES = {152--194},
      ISSN = {1555-578X},
   MRCLASS = {41A30 (41-02)},
  MRNUMBER = {2350268},
MRREVIEWER = {R.\ A.\ Zalik},
}

@misc{GP26arXiv,
      title={Frame constructions associated with operator orbits}, 
      author={Eva A. Gallardo-Gutiérrez and Jonathan R. Partington},
      year={2026 arXiv 2605.29671},
      eprint={2605.29671},
      archivePrefix={arXiv},
      primaryClass={math.FA},
      url={https://arxiv.org/abs/2605.29671}, 
}

@book{K04t,
    AUTHOR = {Korevaar, Jacob},
     TITLE = {Tauberian theory},
    SERIES = {Grundlehren der mathematischen Wissenschaften [Fundamental
              Principles of Mathematical Sciences]},
    VOLUME = {329},
      NOTE = {A century of developments},
 PUBLISHER = {Springer-Verlag, Berlin},
      YEAR = {2004},
     PAGES = {xvi+483},
      ISBN = {3-540-21058-X},
   MRCLASS = {11M45 (11-03 11N05 40-02 40-03 40E05 42-02)},
  MRNUMBER = {2073637},
MRREVIEWER = {Angel\ V.\ Kumchev},
       DOI = {10.1007/978-3-662-10225-1},
       URL = {https://doi-org.proxy.library.vanderbilt.edu/10.1007/978-3-662-10225-1},
}

@article {ACKM26,
    AUTHOR = {Aldroubi, Akram and Cabrelli, Carlos and Krishtal, Ilya and
              Molter, Ursula},
     TITLE = {Dynamical sampling: a survey},
   JOURNAL = {Matematica},
  FJOURNAL = {La Matematica. Official Journal of the Association for Women
              in Mathematics},
    VOLUME = {5},
      YEAR = {2026},
    NUMBER = {2},
     PAGES = {Paper No. 37, 28},
      ISSN = {2730-9657},
   MRCLASS = {42C15 (47A10 65T60 94A20)},
  MRNUMBER = {5071919},
       DOI = {10.1007/s44007-026-00215-y},
       URL = {https://doi.org/10.1007/s44007-026-00215-y},
}

@article{KM25d,
title = {Demystifying Carleson frames},
journal = {Applied and Computational Harmonic Analysis},
volume = {80},
pages = {101811},
year = {2026},
issn = {1063-5203},
doi = {https://doi.org/10.1016/j.acha.2025.101811},
url = {https://www.sciencedirect.com/science/article/pii/S106352032500065X},
author = {Ilya Krishtal and Brendan Miller}
}

@article {CMPP20,
    AUTHOR = {Cabrelli, Carlos and Molter, Ursula and Paternostro, Victoria
              and Philipp, Friedrich},
     TITLE = {Dynamical sampling on finite index sets},
   JOURNAL = {J. Anal. Math.},
  FJOURNAL = {Journal d'Analyse Math\'ematique},
    VOLUME = {140},
      YEAR = {2020},
    NUMBER = {2},
     PAGES = {637--667},
      ISSN = {0021-7670,1565-8538},
   MRCLASS = {42C15 (47B15 94A20)},
  MRNUMBER = {4093918},
MRREVIEWER = {Qingyue\ Zhang},
       DOI = {10.1007/s11854-020-0099-2},
       URL = {https://doi.org/10.1007/s11854-020-0099-2},
}

@book {Garnett81,
    AUTHOR = {Garnett, John B.},
     TITLE = {Bounded analytic functions},
    SERIES = {Pure and Applied Mathematics},
    VOLUME = {96},
 PUBLISHER = {Academic Press, Inc. [Harcourt Brace Jovanovich, Publishers],
              New York-London},
      YEAR = {1981},
     PAGES = {xvi+467},
      ISBN = {0-12-276150-2},
   MRCLASS = {30D55 (30-02 46J15)},
  MRNUMBER = {628971},
MRREVIEWER = {D.\ Sarason},
}

@article{CHP20,
author = {Christensen, Ole and Hasannasab, Marzieh and Philipp, Friedrich},
title = {Frame properties of operator orbits},
journal = {Mathematische Nachrichten},
volume = {293},
number = {1},
pages = {52-66},
doi = {https://doi.org/10.1002/mana.201800344},
url = {https://onlinelibrary.wiley.com/doi/abs/10.1002/mana.201800344},
eprint = {https://onlinelibrary.wiley.com/doi/pdf/10.1002/mana.201800344},
year = {2020}
}

@article {INT25,
    AUTHOR = {Izbiakov, Ilya and Novikov, Sergey and Terekhin, Pavel},
     TITLE = {Complement property and frames in the phase retrieval problem},
      NOTE = {Translation of Funktsional. Anal. i Prilozhen. {\bf 59}
              (2025), no. 1, 18--28},
   JOURNAL = {Funct. Anal. Appl.},
  FJOURNAL = {Functional Analysis and its Applications},
    VOLUME = {59},
      YEAR = {2025},
    NUMBER = {1},
     PAGES = {11--18},
      ISSN = {0016-2663,1573-8485},
   MRCLASS = {42C15 (42C30 46B15)},
  MRNUMBER = {4902502},
       DOI = {10.1134/s1234567825010021},
       URL = {https://doi.org/10.1134/s1234567825010021},
}

@book{Christensen16,
	author = {Christensen, Ole},
	publisher = {Birkhäuser Cham},
	title = {An Introduction to Frames and Riesz Bases},
	year = {2016}}

@article {CHPS24,
    AUTHOR = {Christensen, Ole and Hasannasab, Marzieh and Philipp,
              Friedrich M. and Stoeva, Diana},
     TITLE = {The mystery of {C}arleson frames},
   JOURNAL = {Appl. Comput. Harmon. Anal.},
  FJOURNAL = {Applied and Computational Harmonic Analysis. Time-Frequency
              and Time-Scale Analysis, Wavelets, Numerical Algorithms, and
              Applications},
    VOLUME = {72},
      YEAR = {2024},
     PAGES = {Paper No. 101659, 5},
      ISSN = {1063-5203},
   MRCLASS = {42C15},
  MRNUMBER = {4735722},
       DOI = {10.1016/j.acha.2024.101659},
       URL = {https://doi.org/10.1016/j.acha.2024.101659},
}

@article{ACCMP17,
	Author = {Aldroubi, A. and Cabrelli, C. and \c{C}akmak, A. F. and Molter, U. and Petrosyan, A.},
	Doi = {10.1016/j.jfa.2016.10.027},
	Fjournal = {Journal of Functional Analysis},
	Issn = {0022-1236},
	Journal = {J. Funct. Anal.},
	Mrclass = {47A10 (42C15)},
	Mrnumber = {3579135},
	Mrreviewer = {Ilya A. Krishtal},
	Number = {3},
	Pages = {1121--1146},
	Title = {Iterative actions of normal operators},
	Url = {http://dx.doi.org/10.1016/j.jfa.2016.10.027},
	Volume = {272},
	Year = {2017}}

@article{ACMT17,
	Author = {A. Aldroubi and C. Cabrelli and U. Molter and S. Tang},
	Doi = {http://dx.doi.org/10.1016/j.acha.2015.08.014},
	Issn = {1063-5203},
	Journal = {Applied and Computational Harmonic Analysis},
	Note = {doi: 10.1016/j.acha.2015.08.014},
	Number = {3},
	Pages = {378--401},
	Title = {Dynamical sampling},
	Url = {http://www.sciencedirect.com/science/article/pii/S1063520315001177},
	Volume = {42},
	Year = {2017}}

@article {R56,
    AUTHOR = {Rubel, L. A.},
     TITLE = {Necessary and sufficient conditions for {C}arlson's theorem on
              entire functions},
   JOURNAL = {Trans. Amer. Math. Soc.},
  FJOURNAL = {Transactions of the American Mathematical Society},
    VOLUME = {83},
      YEAR = {1956},
     PAGES = {417--429},
      ISSN = {0002-9947},
   MRCLASS = {30.0X},
  MRNUMBER = {81944},
MRREVIEWER = {R. P. Boas},
       DOI = {10.2307/1992882},
       URL = {https://doi.org/10.2307/1992882},
}

@book{NN2012, title={Treatise on the shift operator: Spectral Function theory}, publisher={Springer}, author={Nikol’skii, N. K.}, year={2012}}
\end{document}